\definecolor{red}{rgb}{1,0,0}
\definecolor{blue}{rgb}{0,0,1}
\definecolor{green}{rgb}{0,.6,0}
\newtheorem{thm}{Theorem}[section]
\newtheorem{cor}[thm]{Corollary}
\newtheorem{lem}[thm]{Lemma}
\newtheorem{prop}[thm]{Proposition}
\theoremstyle{definition}
\newtheorem{rem}[thm]{Remark}
\theoremstyle{definition}
\newtheorem{defn}[thm]{Definition}
\theoremstyle{definition}
\newtheorem{ex}[thm]{Example}
\newcommand{\ZZ}{\mathbb{Z}}
\newcommand{\D}{\Gamma}
\newcommand{\F}{\mathcal{F}}
\newcommand{\E}{\mathcal{E}}
\newcommand{\oti}{\operatorname{OTI}}
\newcommand{\Z}{\operatorname{Z}}
\newcommand{\rev}{\operatorname{Rev}}
\newcommand{\term}{\operatorname{Term}}
\newcommand{\bit}{\begin{itemize}}
\newcommand{\eit}{\end{itemize}}
\newcommand{\beq}{\begin{equation}}
\newcommand{\eeq}{\end{equation}}
\newcommand{\bea}{\begin{eqnarray}} 
\newcommand{\eea}{\end{eqnarray}}
\newcommand{\bpf}{\begin{proof}}
\newcommand{\epf}{\end{proof}\ms}
\newcommand{\bmt}{\begin{bmatrix}}
\newcommand{\emt}{\end{bmatrix}}
\newcommand{\ms}{\medskip}
\newcommand{\lc}{\left\lceil}
\newcommand{\rc}{\right\rceil}
\newcommand{\lf}{\left\lfloor}
\newcommand{\rf}{\right\rfloor}
\newcommand{\noi}{\noindent}
\newcommand{\ceil}[1]{\lc #1 \rc}
\newcommand{\beqs}{\begin{equation*}} 
\newcommand{\eeqs}{\end{equation*}}
\newcommand{\beas}{\begin{eqnarray*}}
\newcommand{\eeas}{\end{eqnarray*}}
\newcommand{\up}[1]{^{(#1)}}
\newcommand{\upc}[1]{^{[#1]}}
\newcommand{\floor}[1]{\lf #1 \rf}
\newcommand{\calf}{\mathcal{F}}
\newcommand{\pt}{\operatorname{pt}}
\newcommand{\ptp}{\operatorname{pt}_{+}}
\newcommand{\throt}{\operatorname{th}}
\newcommand{\thp}{\operatorname{th}_{+}}
\title{Throttling for standard zero forcing on directed graphs}
\author{
Emily Cairncross \thanks{Department of Mathematics, Oberlin College, Oberlin, OH, USA (ecairncr@oberlin.edu)}
\and
Joshua Carlson \thanks{Department of Mathematics and Statistics, Williams College, Williamstown, MA, USA (\{jc31, pjh1, bnk1\}@williams.edu)}
\and 
Peter Hollander \footnotemark[2]
\and 
Benjamin Kitchen \footnotemark[2]
\and 
Emily Lopez \thanks{Department~of Mathematics, University of California at Santa Barbara, Goleta, CA, USA (emily\_e\_lopez@ucsb.edu)}
\and
Ashley Zhuang \thanks{Harvard University, Cambridge, MA, USA (azhuang@college.harvard.edu)}
}
\date{\today}
\begin{document}
\maketitle

\begin{abstract} 
Zero forcing is a process on graphs in which a color change rule is used to force vertices to become blue. The amount of time taken for all vertices in the graph to become blue is the propagation time. Throttling minimizes the sum of the number of initial blue vertices and the propagation time. In this paper, we study throttling in the context of directed graphs (digraphs). We characterize all simple digraphs with throttling number at most $t$ and examine the change in the throttling number after flipping arcs and deleting vertices. We also introduce the \emph{orientation throttling interval (OTI)} of an undirected graph, which is the range of throttling numbers achieved by the orientations of the graph. While the OTI is shown to vary among different graph families, some general bounds are obtained. Additionally, the maximum value of the OTI of a path is conjectured to be achieved by the orientation of a path whose arcs alternate in direction. The throttling number of this orientation is exactly determined in terms of the number of vertices.
\end{abstract}

\noi {\bf Keywords} Information spread, Zero forcing, Propagation time, Throttling

\noi{\bf AMS subject classification} 05C15, 05C20, 05C50, 05C57

\begin{section}{Introduction}\label{intro}
A simple way to model information in a graph is to color each vertex blue (or white) if the information is known (or unknown) at that vertex respectively. \emph{Zero forcing}, introduced in \cite{AIM}, is a process that uses a color change rule to spread information by iteratively changing the color of vertices from white to blue. The \emph{(standard) color change rule} states that if $u$ is a blue vertex and there is a unique white neighbor $w$ of $u$, then $u$ can force $w$ to become blue. Such a force is denoted $u \rightarrow w$. Given an initial coloring of the vertices, the goal of the zero forcing process is to color the entire vertex set of a graph blue by repeatedly performing valid forces. It is natural to attempt to optimize this process by making it as efficient as possible. In this context, there are multiple ways to interpret efficiency, leading to many rich areas of study. 

One way to make zero forcing efficient is to start the process with as few vertices colored blue as possible. All graphs and digraphs (directed graphs) in this paper are simple and the conventional graph theoretic notation and terminology in \cite{Diestel} is used. If $B \subseteq V(G)$ is the initial set of blue vertices in a graph $G$ and it is possible to eventually force each vertex in $V(G)$ blue, then $B$ is called a \emph{zero forcing set} of $G$. The size of a minimum zero forcing set of $G$ is the \emph{zero forcing number}, $\Z(G)$. 

Zero forcing can also be made efficient by reducing the time taken for all vertices to become blue. The following definitions from \cite{Carlson19} make this concept rigorous. With $B \subseteq V(G)$ as the initial set of blue vertices, a set of forces $\calf$ that can be performed in some order until no more valid forces are possible is called a \emph{set of forces of $B$ in $G$}. A set of forces $\calf$ of a subset $B \subseteq V(G)$ can be used to partition $V(G)$ according to time steps starting with $\calf\up{0} := B$. For each $t\geq 0$, $\calf\up{t+1}$ is defined by considering the coloring of $V(G)$ where $\bigcup_{i=0}^t \calf\up{i}$ is blue and $V(G) \setminus \bigcup_{i=0}^t \calf\up{i}$ white. Specifically, given this coloring, $\calf\up{t+1}$ is the set of white vertices $w$ for which there exists a blue vertex $u$ with $(u \rightarrow w) \in \calf$. For simplicity, $\calf\upc{t} := \bigcup_{i=0}^{t} \calf\up{i}$ for each integer $t \geq 0$. Intuitively, $\calf\up{t}$ is the set of  vertices in $V(G)$ that are forced during time step $t$ using $\calf$ and $\calf\upc{t}$ is the set of vertices in $V(G)$ that are blue at time $t$ using $\calf$. The \emph{propagation time of a set of forces $\calf$ in $G$}, denoted $\pt(G; \calf)$, is the smallest integer $t$ such that $\calf\upc{t} = V(G)$. By convention, $\pt(G; \calf) = \infty$ if $\calf$ does not force all vertices in $V(G)$ to become blue. For a subset $B \subseteq V(G)$, \emph{the propagation time of $B$ in $G$}, denoted $\pt(G; B)$, is the minimum value of $\pt(G; \calf)$ over all sets of forces $\calf$ of $B$ in $G$.

In \cite{proptime}, Hogben et al.~optimize zero forcing by studying the minimum propagation time over all minimum zero forcing sets of a graph $G$. This is called the \emph{propagation time} of $G$ and is denoted $\pt(G)$. Then, in \cite{BY}, Butler and Young study the optimal balance between the size of a zero forcing set and its propagation time by introducing the concept of throttling. For a graph $G$ and zero forcing set $B \subseteq V(G)$, $\throt(G;B) := |B| + \pt(G; B)$ and the \emph{throttling number} of $G$ is defined as $\throt(G) = \min\{ \throt(G;B) ~|~ B \subseteq V(G)\}$. Although the zero forcing number is studied in \cite{AIM} as a tool for bounding the nullity of certain matrices associated with a given graph, throttling for zero forcing and its variants is largely a  combinatorial problem. For standard zero forcing, positive semidefinite zero forcing, power domination, and Z-floor forcing, graphs with throttling numbers at most $t$ for an arbitrary integer $t > 0$ have been characterized as particular minors of some larger host graph (see \cite{powerdomthrot, Carlson19, semidefinite, JK19}). Additionally, standard throttling and positive semidefinite throttling have been described as forbidden subgraph problems in \cite{JK19}.

In recent years, zero forcing concepts have been extended to digraphs. A digraph is denoted $\D = (V(\D), E(\D))$, and a simple digraph is a digraph with no parallel arcs or loop arcs. The term \emph{double arcs} is used to refer to a pair of arcs of the form $(u, v)$ and $(v, u)$. An \emph{oriented graph} is a simple digraph with no double arcs, and an \emph{orientation} $\vec{G}$ of a simple, undirected graph $G$ is an oriented graph whose underlying undirected graph is $G$. If $\D$ is a digraph and $(u,v) \in E(\D)$, $u$ is an \emph{in-neighbor} of $v$ and $v$ is an \emph{out-neighbor} of $u$. The set of all in-neighbors and the set of all out-neighbors of a vertex $v$ in a simple digraph $\D$ is denoted as $N_{\D}^-(v)$ and $N_{\D}^+(v)$ respectively where the subscript can be dropped if $\D$ is clear from context. The \emph{in-degree} and \emph{out-degree} of a vertex $v \in V(\D)$ are defined as $|N^-(v)|$ and $|N^+(v)|$ respectively. Furthermore, a \emph{source} is a vertex with in-degree zero, a \emph{sink} is a vertex with out-degree zero. The \emph{(standard) color change rule for simple digraphs} is that if $u$ is a blue vertex and there is a unique white out-neighbor $w$ of $u$, then $u$ can force $w$ to become blue. 

For a simple digraph $\D$, the zero forcing number $\Z(\D)$, a set of forces $\calf$ of a subset $B \subseteq V(\D)$, $\pt(\D; \F)$, $\pt(\D; B)$, $\pt(\D)$, $\throt(\D; B)$, and $\throt(\D)$ are all defined analogously to their undirected counterparts. The parameter $\Z(\D)$ is studied in \cite{ISUREU}, and an upper bound is given for the difference between the zero forcing numbers of two orientations of a given simple graph. Furthermore, the parameters $\pt(\D; \calf)$, $\pt(\D; B)$, and $\pt(\D)$ are studied in \cite{Berliner et al17}. The next natural step is to explore the throttling number of simple digraphs. While $\throt(\D)$ is determined in \cite{Berliner et al17} for a specific type of \emph{Hessenberg path} (see Section \ref{subsec:monochar}), there is much more to be studied.

In this paper, we take a closer look at throttling for simple digraphs. We obtain a variety of results in Section \ref{simpleDigraphsGeneral} about the throttling numbers of simple digraphs in general. Specifically, we show that the throttling number of a simple digraph does not change when all arcs are reversed and we give a structural characterization of all simple digraphs with throttling number at most $t$ for an arbitrary integer $t>0$. In Section \ref{sec:throttlingorientations}, we examine the possible throttling numbers of all orientations of a given simple, undirected graph $G$. To this end, we define the \emph{orientation throttling interval (OTI)} of a simple graph (see Definition \ref{def:OTI}). An upper bound is given for the difference between the throttling numbers of two orientations of an arbitrary simple graph $G$. While the OTI is shown to vary wildly for different graphs, some general bounds and properties are determined. In Section \ref{altpaths}, we focus on the OTI of path graphs. The lowest possible throttling number of a path on $n$ vertices is shown to be $\lceil 2\sqrt{n} - 1 \rceil$. The throttling numbers of specific orientations of paths are determined exactly, and it is conjectured that these orientations achieve the maximum throttling number of paths on $n$ vertices. Finally, in Section \ref{sec:conclusion}, some concluding remarks are made and directions for future work are given.

\end{section}

\section{Throttling for simple digraphs}\label{simpleDigraphsGeneral}

While throttling for undirected graphs has been studied extensively, in this section, we explore the process on directed graphs in general. First, we consider the case when throttling on digraphs is equivalent to that of an undirected graph. The following remark describes this situation.

\begin{rem}\label{graphgeneralization}
Let $G$ be
a simple graph, and let $\overset{\leftrightarrow}{G}$ be the graph obtained by replacing every edge in $E(G)$ with double arcs. Since a vertex $u$ is a neighbor of vertex $v$ in $G$ if and only if $u$ is an out-neighbor of $v$ in $\overset{\leftrightarrow}{G}$, it follows that $\throt(G) = \throt(\overset{\leftrightarrow}{G})$.
\end{rem}

\subsection{Monotonicity and characterizations}
\label{subsec:monochar}

Recall that a graph parameter $p$ is subgraph monotone if $p(H) \leq p(G)$ whenever $H$ is a subgraph of $G$. Minor and induced subgraph monotonicity are similarly defined. In \cite{Carlson19}, it is shown that the throttling number of undirected graphs is not subgraph monotone; therefore, it is not minor monotone. However, this result does not address whether the throttling number is induced subgraph monotone. Note that subgraphs, minors, and monotonicity for digraphs are defined analogously to undirected graphs. The following example illustrates that the throttling number is not induced subgraph monotone for oriented graphs (and therefore, directed graphs) and undirected graphs.

\begin{ex} \label{notIndSubMon}
Consider the oriented graph $\vec{H}$ on the right of Figure \ref{goldfish} as an induced subgraph of $\vec{G}$, shown on the left. A zero forcing set $B \subseteq V(\vec{G})$ with $|B|+ \pt(\vec{G};B) \leq 2 + 2 = 4$ is shown in blue. By checking all possible zero forcing sets of $\vec{H}$, we see that $\throt(\vec{H}) = 5$ which can be achieved using the set of blue vertices shown on the right. 

Next, consider the undirected graph $H$ on the right of Figure \ref{starinduced} as an induced subgraph of $G$, shown on the left. We can similarly observe that $\throt(G) \leq 4$ and $\throt(H) = 5$. Therefore, the throttling number is not induced subgraph monotone for oriented graphs and undirected graphs.
\end{ex}
\begin{figure}[H]
    \centering
    \includegraphics[scale=0.4]{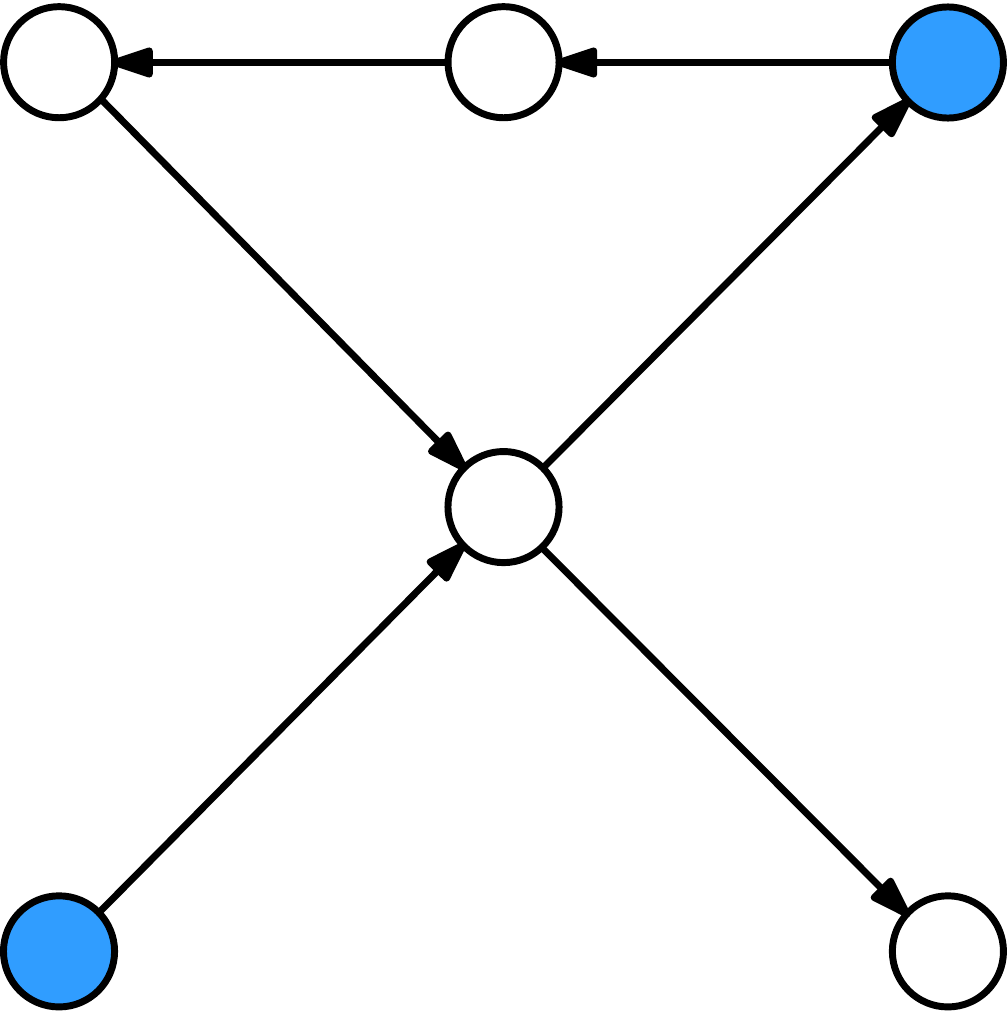}
    \hspace{20mm}
    \includegraphics[scale=0.4]{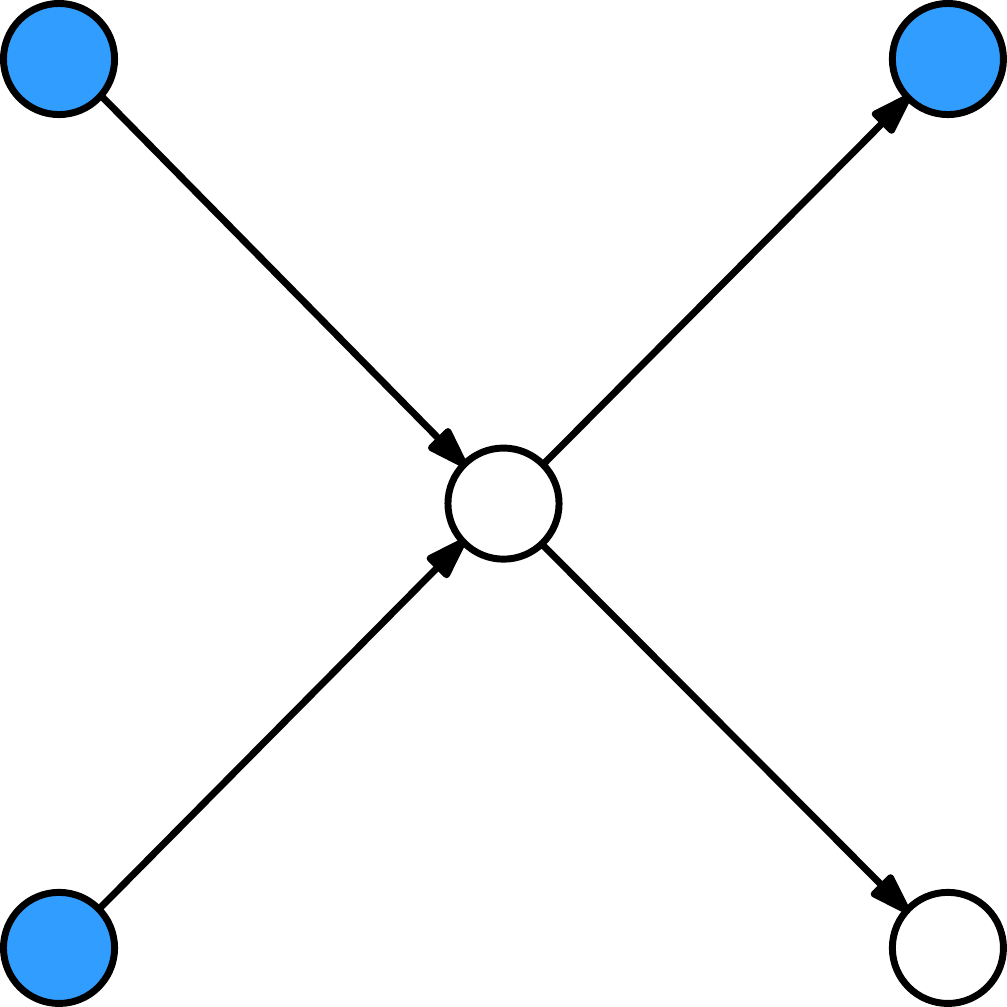}
    \caption{An oriented graph $\vec{G}$ with $\throt(\vec{G}) \leq 4$ is shown on left and an induced subgraph $\vec{H}$ of $\vec{G}$ is shown on the right with $\throt(\vec{H}) = 5$.} \label{goldfish}
\end{figure}
\begin{figure}[H]
    \centering
    \includegraphics[scale = 0.4]{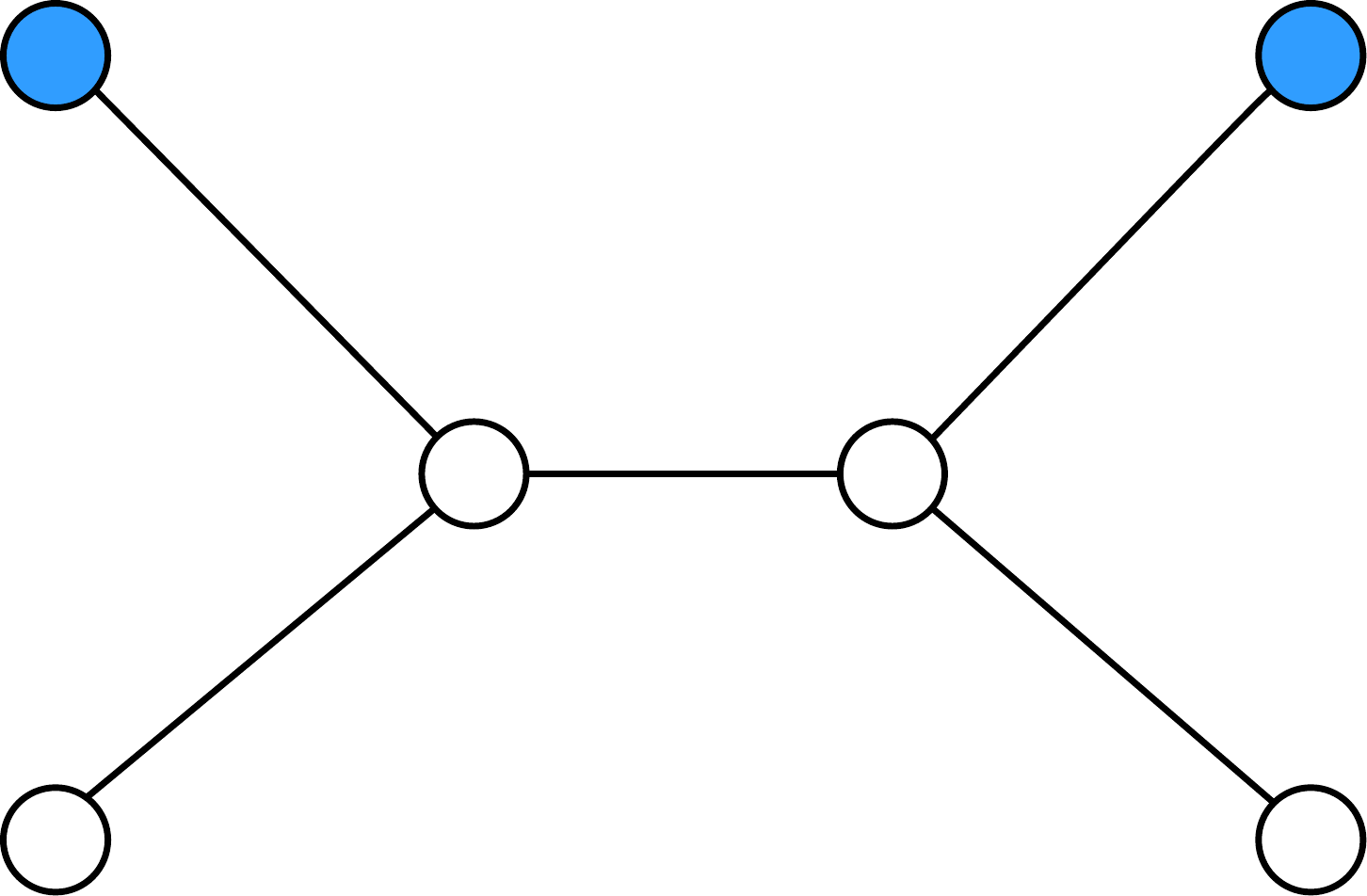}
    \hspace{1 cm}
    \includegraphics[scale = 0.4]{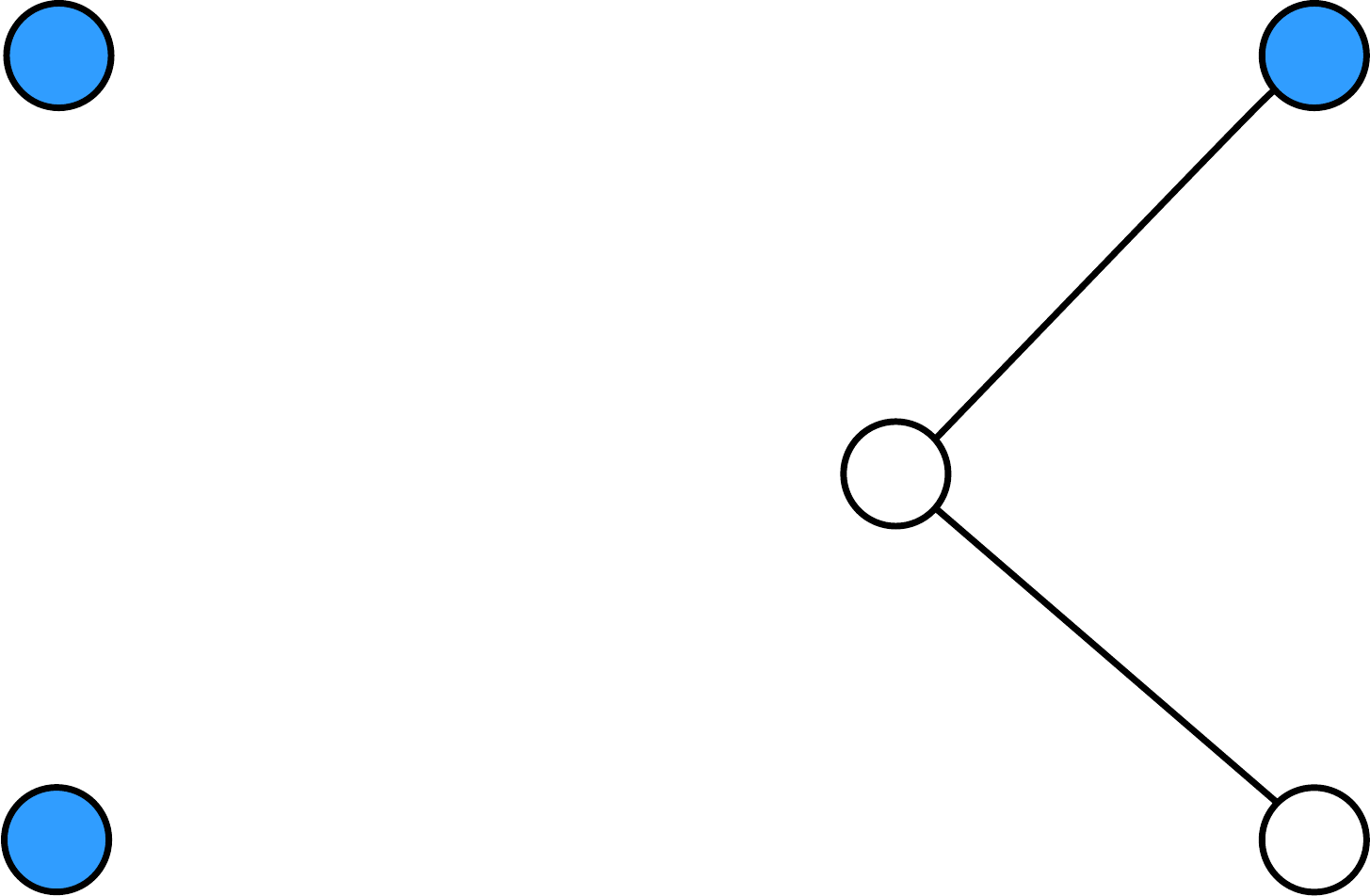}
    \caption{An undirected graph $G$ with $\throt(G) \leq 4$ is shown on left and an induced subgraph $H$ of $G$ is shown on the right with $\throt(H) = 5$. }
    \label{starinduced}
\end{figure}

Example \ref{notIndSubMon} highlights the fact that it is useful to know whether a digraph has throttling number at most $t$ for a given integer $t \geq 1$. A characterization for undirected graphs with this property is given in \cite[Theorem 4.1]{Carlson19}. With some modifications, an analogous characterization holds for directed graphs. To show this result, we need a digraph version of the important graphs utilized in \cite{Carlson19}.  We begin by providing a method for extending a digraph $\D$ into a major of $\D$ using a given a zero forcing set $B$, and a set of standard forces of $B$.

This construction requires defining the following graph. A \emph{Hessenberg path} with vertices $\{ v_1, v_2, \ldots, v_n\}$ is a simple digraph that contains all arcs of the form $(v_i, v_{i+1})$ for each $1 \leq i \leq n-1$ and does not contain any arc of the form $(v_i, v_j)$ with $j > i+1$. No restrictions are placed on \emph{back arcs}, i.e., arcs of the form $(v_i, v_j)$ with $i > j$. Note that a single isolated vertex is also a Hessenberg path. We also need some useful definitions from \cite{Berliner et al17} and \cite{proptime}. Given a simple digraph $\D$, a zero forcing set $B \subseteq V(\D)$, and a set of forces $\F$ of $B$, a sequence of vertices $(v_1, v_2, \ldots, v_k) \in V(\D)$ is a \emph{forcing chain} of $\F$ if $(v_i \to v_{i+1}) \in \F$ for each integer $1 \leq i \leq k-1$. A forcing chain of $\F$ is \emph{maximal} if it is not a subsequence of a larger forcing chain of $\calf$. 

\begin{defn}\label{extensiondef} Let $\D$ be a simple digraph and $B \subseteq V(\D)$ be a standard zero forcing set of $\D$. Suppose $\F$ is a set of forces of $B$ with $\pt(\D; B) = \pt(\D;\F)$. Let $\vec{\mathcal{H}}_1, \vec{\mathcal{H}}_2, \ldots, \vec{\mathcal{H}}_{|B|}$ be the induced Hessenberg paths in $\D$ formed by the maximal forcing chains of $\F$. For each vertex $v \in V(\D)$, let $\tau(v)$ be the number of time steps in the propagation process of $\F$ in which $v$ is blue and has not yet performed a force. Define the \emph{extension of $\D$ with respect to $B$ and $\F$}, denoted $\vec{\E}(\D, B, \F)$, to be the digraph created by the following construction.

First, for each Hessenberg path $\vec{\mathcal{H}}_i \in \D$, we construct a new Hessenberg path $\vec{\mathcal{H}}_i'$ so that for each $v\in \vec{\mathcal{H}}_i$, there are $\tau(v)$ copies of $v$ in $\vec{\mathcal{H}}_i'$, and for each pair of vertices $a,b \in \vec{\mathcal{H}}_i$ such that $a$ is forced before $b$ using $\calf$, every copy of $a$ is to the left of every copy of $b$ in $\vec{\mathcal{H}}_i'$. Add an arc going left to right between each pair of consecutive vertices in each $\vec{\mathcal{H}}_i$, creating a forward-directed path. We call these arcs \emph{path arcs}. Also, add the same back arcs of the form $(v,u) \in E(\vec{\mathcal{H}}_i)$ to $\vec{\mathcal{H}}_i'$ by connecting the first instance of $v$ to the first instance of $u$ in $\vec{\mathcal{H}}_i'$. Observe that $|V(\vec{\mathcal{H}}_i')| = \pt(\D;B) + 1$ for each $1 \leq i \leq |B|$, and the Hessenberg paths $\{\vec{\mathcal{H}}_1',\vec{\mathcal{H}}_2',\ldots, \vec{\mathcal{H}}_{|B|}'\}$ can be arranged into a $|B| \times (\pt(\D;B) + 1)$ array of vertices.
    
Then, for each arc \[(u,v) \in E(\D) \setminus \bigcup_{i=1}^{|B|} E(\vec{\mathcal{H}}_i), \] $v$ must be blue before $u$ can perform a force in $\D$ since $u$ and $v$ are in distinct Hessenberg paths. Therefore, there must be a copy $v'$ of $v$ and a copy $u'$ of $u$ in the $|B| \times (\pt(\D;B) + 1)$ array such that $v'$ appears in either the same column as $u'$ or in some column left of $u'$. For each of these arcs $(u, v)$, create an arc from the last instance of $u$ to the first instance of $v$ in each of their respective paths. Note that this will always create either a vertical arc or a backward arc, but never a forward arc. An illustration of this extension can be found in Figure \ref{extension}.
\end{defn}

\begin{figure}[H]
    \centering
    \includegraphics[scale = 0.4]{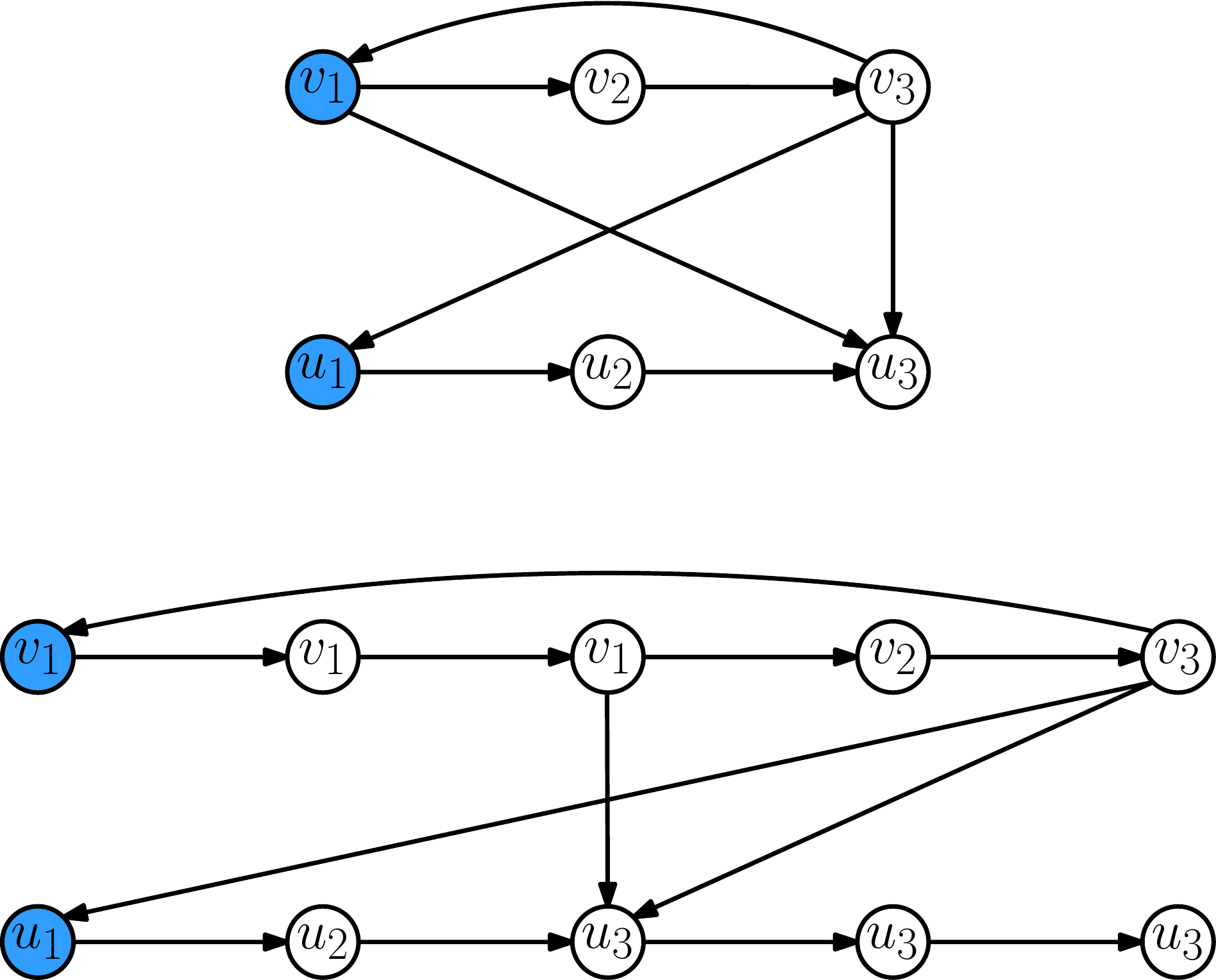}
    \caption{The digraph (above) has the following extension (below) by Definition \ref{extensiondef}.}
    \label{extension}
\end{figure}
It is important to note that every digraph $\D$ is a minor of any of its own extensions. Specifically, we can always contract the path arcs between copies of the same vertex to obtain the original digraph $\D$. Next, we construct a digraph, illustrated in Figure \ref{host}, that can be used to characterize graphs with a given throttling number. 


\begin{defn}
For any integers $a \geq 1$ and $b \geq 0$, the digraph \emph{$H_{a,b + 1}$} is constructed via the following process. Begin with an undirected complete graph on $a \times (b+1)$ vertices and replace each edge with double arcs. Arrange the vertices in an array with $a$ rows and $b+1$ columns. Then, label every vertex with respect to its location on the array so that a vertex that lies in the  $i$-th row and $j$-th column is labeled as $v_{i,j}$, where  $0 \leq i \leq a-1$ and $0 \leq j \leq b$ (with $v_{0,0}$ and $v_{a-1,b}$ as the bottom-left and top-right corners respectively). Next, delete all the forward diagonal arcs, i.e., the arcs of the form $(v_{h,k}, v_{l,m})$ where $h \neq l$ and $m > k$. Also, delete all forward arcs of the form $(v_{h,k}, v_{l,m})$ where $h=l$ and $m > k + 1$ so that each row is an induced Hessenberg path with all possible backward arcs.

We define the \emph{path arcs} of $H_{a,b + 1}$ to be the arcs of the form $(v_{i,j},v_{i,j+1})$ for any $0 \leq i \leq a-1$ and $0 \leq j \leq b-1$. We refer to all other arcs in $H_{a,b+1}$ as \emph{non-path arcs}.
\end{defn}

\begin{figure}[H]
    \centering
    \includegraphics[scale=0.4]{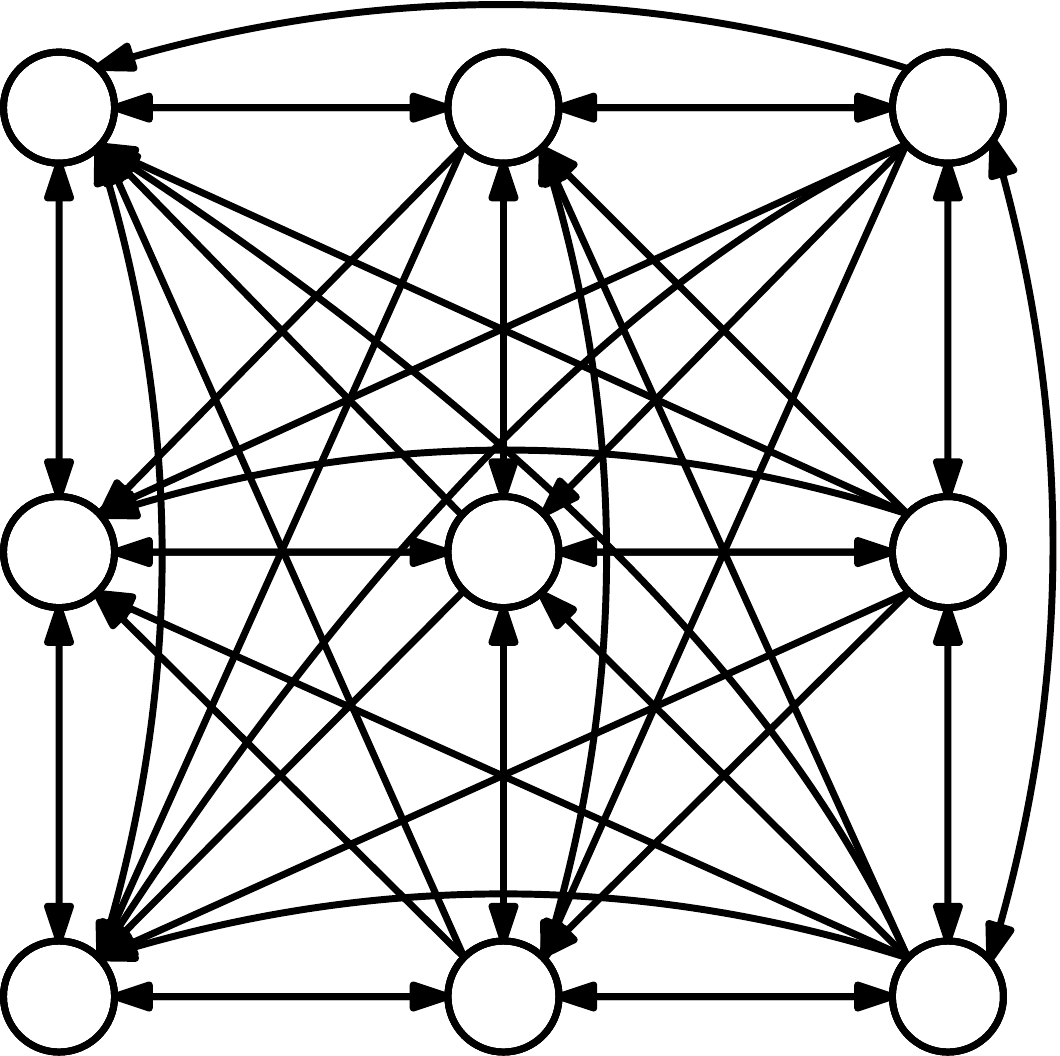}
    \caption{The graph $H_{3,3}$ is shown.}
    \label{host}
\end{figure}

To use a similar argument as in the proof of \cite[Theorem 4.1]{Carlson19}, we must ensure that we can contract an arc used to perform a force in a directed graph without increasing the throttling number. The following remark addresses this necessary condition. 

\begin{rem}\label{contractarc}
    Lemma 3.15 in \cite{Carlson19} states that in an undirected graph, contracting an arc in a forcing chain cannot increase the throttling number. Using this exact argument, it follows that contracting an arc that is used to perform a force also does not increase the throttling number on a directed graph.
\end{rem}

Using the graph $H_{a,b+1}$, we can now give an analogous theorem to \cite[Theorem 4.1]{Carlson19} for simple digraphs. 

\begin{thm}\label{charthm}
Given a simple digraph $\D$ and a positive integer $t$, $\throt (\D) \leq t$ if and only if there exist integers $a\geq 1$ and $b\geq 0$ such that $a+b = t$ and $\D$ can be obtained from $H_{a,b+1}$ by contracting path arcs and deleting non-path arcs.
\end{thm}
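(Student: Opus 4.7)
The plan is to mirror the argument of \cite[Theorem~4.1]{Carlson19} in the directed setting, using the extension construction of Definition~\ref{extensiondef} together with the host digraph $H_{a,b+1}$. The sufficiency direction will build $\D$ forward from $H_{a,b+1}$ by a throttling-preserving sequence of operations, while the necessity direction will run the propagation process under an optimal zero forcing set to recover a spanning sub-digraph of $H_{a,b+1}$ that contracts to $\D$.

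For the sufficiency direction, I will first verify $\throt(H_{a,b+1}) \leq a+b$ by declaring the leftmost column $B_0 = \{v_{0,0}, v_{1,0}, \ldots, v_{a-1,0}\}$ to be blue. The key observation is that once columns $0, \ldots, j$ are entirely blue, every out-neighbor of $v_{i,j}$ in $H_{a,b+1}$ other than $v_{i,j+1}$ is either a within-row back arc or a cross-row arc to some column $\leq j$, and hence already blue; so $v_{i,j+1}$ is the unique white out-neighbor and is forced at time step $j+1$. This yields propagation time exactly $b$. Because deleting non-path arcs only removes out-neighbors, it cannot destroy a unique-white-out-neighbor situation, so $B_0$ remains a zero forcing set of the same propagation time in any digraph obtained from $H_{a,b+1}$ by deleting non-path arcs. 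Applying Remark~\ref{contractarc} once per contracted path arc then gives $\throt(\D) \leq a+b = t$.

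For the necessity direction, I will choose a zero forcing set $B \subseteq V(\D)$ and a set of forces $\F$ of $B$ with $|B| + \pt(\D; \F) \leq t$ and set $a = |B|$, $b = t - a$, so $\pt(\D;B) \leq b$. I will form $\vec{\E}(\D, B, \F)$, which sits naturally on an $a \times (\pt(\D;B)+1)$ array; if $\pt(\D;B) < b$, I will pad each row on the right with $b - \pt(\D;B)$ additional vertices joined by path arcs to obtain an $a \times (b+1)$ array. Identifying this array with the array of $H_{a,b+1}$, it suffices to show that every arc of the padded extension is already an arc of $H_{a,b+1}$; then the padded extension is obtained from $H_{a,b+1}$ by deleting non-path arcs, and contracting the path arcs (both the padding arcs and the arcs between successive copies of the same vertex within each chain) recovers $\D$.

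The main obstacle is the cross-row arc verification: for an arc $(u,v) \in E(\D)$ with $u$ and $v$ in distinct chains of $\F$, I must check that the arc placed from the last copy of $u$ to the first copy of $v$ in the extension is not forward-diagonal, i.e., lies in a column pair $(k,m)$ with $m \leq k$. This will follow from the timing constraint of standard zero forcing: validity of the force that $u$ performs under $\F$ requires every out-neighbor of $u$ other than the vertex $u$ forces to be blue by the time $u$ performs that force, so $v$ must be blue no later than the last time step at which $u$ is blue and has not yet forced (or, if $u$ never forces, no later than $\pt(\D;B)$). Translated into the array, the first copy of $v$ lies in a column no greater than the column of the last copy of $u$, matching precisely the cross-row arcs retained in the construction of $H_{a,b+1}$. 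Within-row back arcs and path arcs match directly, so once this timing inequality is pinned down the rest is bookkeeping.
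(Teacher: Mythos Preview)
Your proposal is correct and follows essentially the same route as the paper: for sufficiency, use the leftmost column of $H_{a,b+1}$ as a zero forcing set, observe that non-path arc deletions preserve the forcing process, and invoke Remark~\ref{contractarc} for path-arc contractions; for necessity, embed the extension $\vec{\E}(\D,B,\F)$ of Definition~\ref{extensiondef} into $H_{a,b+1}$ (your padding step is exactly the paper's passage from $H_{a,\pt(\D;B)+1}$ to $H_{a,b+1}$) and contract back down to $\D$. The cross-row timing check you spell out is what the paper takes for granted in the last paragraph of Definition~\ref{extensiondef}, so you are simply being more explicit there.
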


\begin{proof}
First, suppose $\throt(\D) \leq t$. Let $B \subseteq V(\D)$ be a zero forcing set of $\D$ that satisfies $\throt(\D;B) \leq t$ and let $\F$ be a set of forces of $B$ in $\D$ such that $\pt(\D;\F)=\pt(\D;B)$. Let $a=|B|$, $b'=\pt(\D;B)=\throt(\D;B)-a$, and $b=t-a$. Then, $b'\leq b$ and $\D$ is a minor of $\vec{\E}(\D, B, \F)$. Furthermore, $\vec{\E}(\D, B, \F)$ is a subdigraph of $H_{a, b'+1}$ which is a subdigraph of $H_{a, b+1}$. 
Note that by the construction of $\vec{\E}(\D, B, \F)$ and $H_{a,b+1}$, $\D$ can be obtained from $H_{a,b+1}$ by contracting path edges and deleting non-path edges.

Conversely, suppose $\D' = H_{a, b+1}$ with $a + b = t$ and $\D$ can be obtained from $\D'$ by contracting path arcs and deleting non-path arcs. Observe that the vertices in the left column of $H_{a,b+1}$ form a zero forcing set of size $a$ with propagation time $b$. By Remark \ref{contractarc}, contracting path arcs in $H_{a,b+1}$ does not increase the throttling number. Furthermore, the only arcs used to perform forces in $H_{a,b+1}$ are path arcs. Therefore, deleting non-path arcs also does not increase the throttling number. Thus, $\throt(\D) \leq \throt(H_{a,b+1}) \leq a + b$.
\end{proof}

Note that for a fixed integer $t \geq 1$, there are finitely many digraphs of the form $H_{a,b+1}$ with $a +b = t$, which means there are finitely many digraphs that can be obtained from them. Corollary \ref{cor:finite} follows from this observation.

\begin{cor}\label{cor:finite}
    If $t$ is a fixed positive integer, then there are finitely many digraphs $\D$ with throttling number equal to $t$.
\end{cor}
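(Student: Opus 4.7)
The plan is to deduce the corollary directly from Theorem \ref{charthm}. Since every digraph with throttling number equal to $t$ in particular has throttling number at most $t$, it suffices to show that only finitely many simple digraphs $\D$ satisfy $\throt(\D) \leq t$, and then restrict to the subcollection attaining equality.

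First, I would observe that for fixed $t \geq 1$, the set of pairs $(a,b)$ with $a \geq 1$, $b \geq 0$, and $a+b=t$ has exactly $t$ elements, so the collection of host digraphs $\{H_{a,b+1} : a \geq 1,\, b \geq 0,\, a+b=t\}$ is finite. Each such $H_{a,b+1}$ has $a(b+1) \leq t(t+1)/? $ — I will just note that each host has a fixed finite vertex and arc set, so finitely many arcs in total.

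Next, I would invoke Theorem \ref{charthm}: every simple digraph $\D$ with $\throt(\D)\leq t$ is obtained from one of these finitely many host digraphs $H_{a,b+1}$ by contracting some subset of path arcs and deleting some subset of non-path arcs. For a fixed host digraph with a finite arc set, there are only finitely many choices of which path arcs to contract and which non-path arcs to delete, and each such choice produces a single (simple) digraph on at most $a(b+1)$ vertices. Hence the collection of digraphs obtainable from a given $H_{a,b+1}$ in this way is finite. Taking the union over the finitely many values of $(a,b)$, we obtain a finite family containing every digraph with throttling number at most $t$. The subfamily of digraphs with throttling number exactly $t$ is then also finite.

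There is no real obstacle here; the result is an immediate counting consequence of the characterization in Theorem \ref{charthm}. The only point requiring care is the convention that we count digraphs up to isomorphism rather than as labeled objects on a fixed ground set — otherwise the finiteness claim would fail for a trivial reason. Under the isomorphism convention the argument above goes through verbatim.
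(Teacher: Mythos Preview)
Your argument is correct and matches the paper's own reasoning, which simply observes that there are finitely many host digraphs $H_{a,b+1}$ with $a+b=t$ and hence finitely many digraphs obtainable from them by the operations in Theorem~\ref{charthm}. Your added remark about counting up to isomorphism is a sensible clarification that the paper leaves implicit.
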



\subsection{Flipping arcs and other digraph operations}\label{subsec:otherops}

Next, we study the effect of flipping arcs on throttling. We start by giving a relationship between the throttling numbers of two directed graphs $\D$ and $\D_0$, where $\D_0$ can be obtained from $\D$ by flipping a single arc, i.e., replacing $(a,b)\in E(\D)$ with $(b,a)$.

\begin{prop}\label{singleflip}
    Flipping an arc $(a,b) \in E(\D)$ of a simple digraph $\D$, where $(b,a) \notin E(\D)$, to achieve a new graph $\D_0$ cannot increase the throttling number of the graph by more than one.
\end{prop}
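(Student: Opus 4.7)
The plan is to fix an optimal throttling set $B$ and set of forces $\calf$ for $\D$ (so $\throt(\D) = |B| + \pt(\D;\calf)$) and to construct from $(B,\calf)$ an initial set $B'$ with $|B'| \le |B|+1$ together with a set of forces $\calf'$ in $\D_0$ whose propagation time is at most $\pt(\D;\calf)$. Since $\throt(\D_0;B') \le |B'| + \pt(\D_0;\calf')$, this yields $\throt(\D_0) \le (|B|+1) + \pt(\D;\calf) = \throt(\D)+1$.

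Flipping $(a,b)$ to $(b,a)$ changes the graph locally in exactly two ways: $a$ loses $b$ as an out-neighbor, and $b$ gains $a$ as an out-neighbor. The first change is an obstruction if the force $(a \to b)$ lies in $\calf$, since that arc is no longer present in $\D_0$. The second change is an obstruction if $b$ performs a force $(b \to v)$ in $\calf$, because in $\D_0$ the vertex $a$ is now an extra out-neighbor of $b$ and may still be white at the moment $b$ is supposed to force $v$, destroying the uniqueness requirement.

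The proposal is to resolve these two obstructions with a case split on whether $(a \to b) \in \calf$. If $(a \to b) \in \calf$, set $B' = B \cup \{b\}$ and $\calf' = \calf \setminus \{(a \to b)\}$; the newly blue $b$ removes the need for the missing force, and whenever $b$ performs a downstream force $(b \to v) \in \calf$ at time $t_v$, the presence of $(a \to b)$ in $\calf$ forces the ordering $t_a < t_b < t_v$, which lets us conclude that $a$ is blue in $\D_0$ by time $t_v - 1$. If instead $(a \to b) \notin \calf$, set $B' = B \cup \{a\}$ and delete from $\calf$ the force targeting $a$, if one exists; now $a$ is blue from the start, which directly resolves the second obstruction, while any force $(u \to b) \in \calf$ must have $u \neq a$ and so is unaffected by removing the arc $(a,b)$ from the graph.

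The technical step is an induction on the forcing time $t_v$ of $v$ in $\calf$, showing that the forces in $\calf'$ are valid in $\D_0$ and that each $v$ is blue in $\D_0$ under $\calf'$ by time $t_v$. For a forcer $u \neq a, b$, the out-neighborhood is unchanged by the flip, so validity transfers directly from $\calf$; for $u = a$ the out-neighborhood only shrinks (losing $b$), which can only help; the delicate case is $u = b$, where one must verify that the new out-neighbor $a$ is blue in $\D_0$ at the correct time. In the first sub-case this uses the strict time inequality noted above, and in the second sub-case it is immediate because $a \in B'$. The main obstacle to nail down rigorously is this $u = b$ subcase, but once the ordering $t_a \le t_v - 2$ is in hand the remainder of the induction is routine bookkeeping.
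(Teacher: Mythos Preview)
Your proposal is correct and follows essentially the same approach as the paper: the same case split on whether $(a\to b)\in\calf$, the same choices $B'=B\cup\{b\}$ (with $\calf'=\calf\setminus\{a\to b\}$) and $B'=B\cup\{a\}$ (deleting the force into $a$) in the two cases, and the same core observation that the only vertex gaining an out-neighbor is $b$, whose new out-neighbor $a$ is already blue at the relevant time. The paper packages the verification as the containment $\calf^{[t]}\subseteq\calf_0^{[t]}$ for all $t$, whereas you phrase it as an induction on the forcing time $t_v$; these are the same argument organized slightly differently.
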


\begin{proof}
Let $B$ be a zero forcing set of $\D$ with a set of forces $\F$ such that $\pt(\D; \F) = \pt(\D; B)$ and $\throt(\D) = \throt(\D; B)$. Suppose $\D_0$ is the digraph obtained from $\D$ by flipping an arbitrary arc $(a,b) \in E(\D)$ where $(b,a) \notin E(\D)$. To show that $\throt(\D_0) \leq \throt(\D) + 1$, it suffices to find a zero forcing set $B_0 \subseteq V(\D_0)$ with $|B_0|\leq |B| + 1$ and $\pt(\D_0;B_0) \leq \pt(\D; B)$. 

First, suppose $(a \to b) \in \F$.    
In this case, we claim that $B_0 = B \cup \{b\}$ is a zero forcing set of $\D_0$. Observe that $|B_0| \leq |B| + 1$ and $\F_0 = \F \setminus \{a \to b\}$ is a set of forces of $B_0$. To show that $\pt(\D_0; B_0) \leq \pt(\D; B)$, it is sufficient to show that the vertices forced by time step $t$ in  $\D$ are also forced in $\D_0$ by time step $t$. In other words, we aim to show for any time step $t$, we have $\F^{[t]} \subseteq \F_0^{[t]}$.
    
Let $t_B'$ be the time step in which the force $a \to b$ is performed in $\D$. Note that for any vertex $v \in V(\D)\setminus \{b\}$ and for all $0 \leq t \leq t_B'$, the set of white out-neighbors of $v$ in $\D_0$ at time step $t$ using $\F_0$ is a subset of the set of white out-neighbors of $v$ in $\D$ at time step $t$ using $\F$. Symbolically, this means $\big( N_{\D_0}^+(v) \setminus \F_0^{[t]} \big) \subseteq \big( N_{\D}^+(v) \setminus \F^{[t]} \big)$. Since $b$ is not yet blue at any time $t$ for $0 \leq t \leq t_B'$ in $\D$, it follows that $\F^{[t]} \subseteq \F_0^{[t]}$ for all $0 \leq t \leq t_B'$.
    
Observe that $b$ is the only vertex that gained an out-neighbor in $\D_0$, namely $a$, after flipping the arc in $\D$. At time $t_B'$, the vertex $a$ must be blue in $\D$ using $\F$ in order to force $b$. Therefore, vertex $a$ is also blue at time $ t_B'$ in $\D_0$ using $\F_0$. Since $a \in \calf\upc{t_B'}$ and $\F^{[t_B']} \subseteq \F_0^{[t_B']}$,  $a \in \F_0^{[t_B']}$. This means that $\big(N_{\D_0}^+(b) \setminus \F_0^{[t_B']} \big) \subseteq \big(N_{\D}^+(b) \setminus \F^{[t_B']} \big)$. Thus, $\big(N_{\D_0}^+(v) \setminus \F_0^{[t_B']} \big) \subseteq \big(N_{\D}^+(v) \setminus \F^{[t_B']} \big)$ for all $v \in V(\D)$. This includes the vertex $b$, which implies that for all $v \in V(\D)$ at time $t > t_B'$, $ \big(N_{\D_0}^+(v) \setminus \F_0^{[t]} \big) \subseteq \big(N_{\D}^+(v) \setminus \F^{[t]} \big)$. Hence, $\F^{[t]} \subseteq \F_0^{[t]}$ for all $t \geq 0$ and $\pt(\D_0;\F_0) \leq \pt(\D; \F)$.
    
Next, suppose $(a \to b) \notin \F$. In this case, we claim $B_0 = B \cup \{a\}$ is a zero forcing set of $\D_0$ with $|B_0| \leq |B| + 1$. If there exists a vertex $x$ such that $(x \to a) \in \F$, we let $\F_0 = \F \setminus \{x \to a\}$; otherwise, let $\F_0 = \F$. Observe that $b$ is the only vertex that gained an out-neighbor after flipping $(a,b)$, namely $a$, which is blue from the start. Hence, for all $v\in V(\D)$, we have $\big( N_{\D_0}^+(v) \setminus \F_0^{[t]} \big) \subseteq \big( N_{\D}^+(v) \setminus \F^{[t]} \big)$, which implies $\F^{[t]} \subseteq \F_0^{[t]}$ for all $t \geq 0$. Thus, $\pt(\D_0;\F) \leq \pt(\D; \F)$.

Note that $\pt(\D_0;\F_0) \leq \pt(\D; \F)$ and $|B_0|\leq |B| + 1$ in both cases. Therefore, \[ \throt(\D_0) \leq \throt(\D_0; B_0) \leq |B_0| + \pt (\D_0; \F_0) \leq |B| +1 + \pt(\D; \F) = \throt(\D)+1. \qedhere \]
\end{proof}

\begin{cor}\label{arcflip}
    If a simple digraph $\D_0$ is obtained from another simple digraph $\D$ by flipping a single arc $(a,b) \in E(\D)$ where $(b,a) \notin E(\D)$, then $|\throt(\D_0)-\throt(\D)|\leq 1$.
\end{cor}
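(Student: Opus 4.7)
The plan is to deduce this corollary directly from Proposition \ref{singleflip} by applying it symmetrically in both directions. Proposition \ref{singleflip} already supplies one half of the desired inequality: since $\D_0$ is obtained from $\D$ by flipping the arc $(a,b)$, and the hypothesis $(b,a) \notin E(\D)$ ensures the proposition's precondition is met, we immediately get $\throt(\D_0) \leq \throt(\D) + 1$.

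For the reverse inequality, I would observe that $\D$ is itself obtainable from $\D_0$ by a single arc flip, namely flipping $(b,a) \in E(\D_0)$. The hypothesis guarantees this is a legal flip for the proposition: since $(b,a) \notin E(\D)$ and $(a,b) \in E(\D)$, after performing the flip the resulting digraph $\D_0$ contains $(b,a)$ but not $(a,b)$ (as $\D$ and $\D_0$ are simple digraphs with no double arcs at this location). Hence $(a,b) \notin E(\D_0)$, so Proposition \ref{singleflip} applies to the flip $(b,a) \mapsto (a,b)$ in $\D_0$, yielding $\throt(\D) \leq \throt(\D_0) + 1$.

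Combining $\throt(\D_0) \leq \throt(\D) + 1$ and $\throt(\D) \leq \throt(\D_0) + 1$ gives $|\throt(\D_0) - \throt(\D)| \leq 1$, as required. There is no genuine obstacle here — the whole point of stating Corollary \ref{arcflip} separately from Proposition \ref{singleflip} is to record the symmetric two-sided bound, and the only subtlety worth being careful about is verifying that the hypothesis of the proposition is preserved under the reverse flip, which follows immediately from the simple-digraph assumption together with the given condition $(b,a) \notin E(\D)$.
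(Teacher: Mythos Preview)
Your proposal is correct and follows essentially the same approach as the paper: apply Proposition \ref{singleflip} in both directions, using that $\D$ is recovered from $\D_0$ by the reverse flip. The paper phrases the second half as a contradiction (``if the throttling number decreased by more than one, flipping back would increase it by more than one''), whereas you argue directly and are slightly more explicit in checking that $(a,b)\notin E(\D_0)$ so that the proposition's hypothesis holds for the reverse flip; these are cosmetic differences only.
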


\begin{proof}
    By Proposition \ref{singleflip}, the throttling number cannot increase by more than 1, so it suffices to prove that the throttling number cannot decrease by more than 1. Suppose that $\D_0$ can be obtained from $\D$ by flipping a single arc and that the throttling number decreases by more than 1. In turn, $\D$ can be obtained from $\D_0$ by flipping a single arc, and the throttling number increases by more than 1. This contradicts Proposition \ref{singleflip}.
\end{proof}

Corollary \ref{arcflip} motivates further study of the throttling number as opposed to the propagation time of a digraph $\D$, as $\throt(\D)$ behaves more predictably than $\pt(\D)$ does when a single arc is flipped. For example, a path with 4 vertices with all arcs going in one direction has propagation time 3. However, if the arc incident to the source is flipped, the propagation time becomes 1, so an arc flip can change this parameter by more than 1.

This raises a question: how do propagation time and throttling number change when every arc in the digraph is reversed? The \emph{transpose} of a digraph $\D$, denoted $\D^T$, is obtained by flipping all of its arcs. Additionally, the \emph{terminus of $\F$}, denoted $\term(\F)$, is the set of vertices that do not perform a force in $\F$.  The \emph{reversal of a set of forces $\F$,} denoted $\rev(\F)$, is the set of forces $\F$ found by reversing the direction of each arc in $\F$. Observe that $\term(\F)$ is a zero forcing set of $\D^T$ with $\rev(\F)$ as a set of forces. 

The next result, in \cite{Berliner et al17}, relates the propagation time of a set of forces in a digraph to that of its reversal in the digraph's transpose.

\begin{lem}\cite[Corollary 2.4]{Berliner et al17}\label{reverseprop}
Let $\D = (V,E)$ be a simple digraph, $B \subseteq V(\D)$ be a minimum zero forcing set of $\D$, and $\F$ be a set of forces of $B$ such that $\pt(\D;\F) = \pt(\D; B)$. Then, $\pt(\D, \F) = \pt(\D^T;\rev(\F))$.
\end{lem}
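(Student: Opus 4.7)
The plan is to prove both inequalities $\pt(\D^T; \rev(\F)) \leq \pt(\D; \F)$ and $\pt(\D; \F) \leq \pt(\D^T; \rev(\F))$ by constructing a natural time-reversing schedule, then exploiting an involution argument. Set $T = \pt(\D;\F)$ and for each $v \in V(\D)$ let $t(v)$ denote the time step at which $v$ becomes blue in the forward process on $\D$ using $\F$ (so $t(v) = 0$ if $v \in B$ and otherwise $t(v) \in \{1, \ldots, T\}$). A preliminary observation is that a vertex performs a force in $\rev(\F)$ if and only if it is forced in $\F$, so $\term(\rev(\F)) = B$; this symmetry will supply the reverse inequality.

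For the main direction, I would schedule each reversed force $(w \to u) \in \rev(\F)$ (coming from $(u \to w) \in \F$, performed in the forward process at time $t(w)$) to occur in $\D^T$ at reverse-time step $T - t(w) + 1$. To verify this is a legal schedule under the color change rule on $\D^T$ starting from $\term(\F)$, two conditions must hold at the moment $(w \to u)$ fires: the forcer $w$ is blue, and every out-neighbor of $w$ in $\D^T$ other than $u$ — equivalently, every in-neighbor of $w$ in $\D$ other than $u$ — is blue at reverse-time $T - t(w)$.

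For $w$ itself: either $w \in \term(\F)$ and is blue from the start, or $w$ performs some force $(w \to x) \in \F$ at time $t(x)$, in which case $t(x) \geq t(w)+1$ (since $w$ only becomes blue at time $t(w)$), so the schedule places $(x \to w)$ at reverse-time $T - t(x) + 1 \leq T - t(w)$. For an in-neighbor $y$ of $w$ in $\D$ with $y \neq u$: if $y \in \term(\F)$ there is nothing to show; otherwise $y$ performs some force $(y \to z) \in \F$ at time $t(z)$. Uniqueness of the forcing arc into $w$ together with $(u \to w) \in \F$ forces $z \neq w$, so $w$ is an out-neighbor of $y$ distinct from $z$, and the color change rule applied when $y$ fires demands that $w$ be blue at forward-time $t(z) - 1$. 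This gives $t(z) - 1 \geq t(w)$, hence $T - t(z) + 1 \leq T - t(w)$, so $y$ is blue in the reverse process by reverse-time $T - t(w)$. Combining these facts yields $\pt(\D^T; \rev(\F)) \leq T$.

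For the opposite inequality, I would apply this same argument to the triple $(\D^T, \term(\F), \rev(\F))$, using $(\D^T)^T = \D$, $\rev(\rev(\F)) = \F$, and the preliminary observation $\term(\rev(\F)) = B$ to close the loop. The main delicacy is the in-neighbor step: the clean conclusion relies on uniqueness of the forcing arc into $w$ (so that $y$'s force in $\F$ cannot be $y \to w$) combined with the standard color change rule applied at $y$, and this is the one place where careless phrasing can obscure the underlying time comparison.
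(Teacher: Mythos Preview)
The paper does not prove this lemma at all; it is quoted verbatim from \cite{Berliner et al17} and only used as a black box (with the additional remark that the same proof works when $B$ is not a minimum zero forcing set). So there is no in-paper proof to compare against.

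Your time-reversal schedule is the standard argument and is correct. The key computations---that the reversed force $(w\to u)$ can legally fire at reverse-time $T - t(w) + 1$ because (i) $w$ is blue by reverse-time $T - t(w)$ via its own reversed force, and (ii) every other in-neighbor $y$ of $w$ in $\D$ satisfies $t(w)\le t(z)-1$ by the color change rule applied when $y$ forces $z$---all go through, and the involution $(\D,\F)\mapsto(\D^T,\rev(\F))$ cleanly gives the reverse inequality. One remark: you never use the hypotheses that $B$ is a \emph{minimum} zero forcing set or that $\pt(\D;\F)=\pt(\D;B)$; your argument shows $\pt(\D;\F)=\pt(\D^T;\rev(\F))$ for an arbitrary zero forcing set $B$ and arbitrary set of forces $\F$ of $B$. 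This is not a gap---it is exactly the strengthening the paper alludes to immediately after stating the lemma.
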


Although Lemma \ref{reverseprop} uses a minimum zero forcing set $B$, an identical proof yields the same result when $B$ is not minimum. We now consider whether $\calf$ and $\rev(\calf)$ can be replaced by their respective zero forcing sets (namely, $B$ and $\term(\calf)$) in the equation $\pt(\D, \F) = \pt(\D^T;\rev(\F))$. Figure \ref{revNotWorking} illustrates that this is not always the case.

\begin{figure}[H]
    \centering
    \includegraphics[scale=0.5]{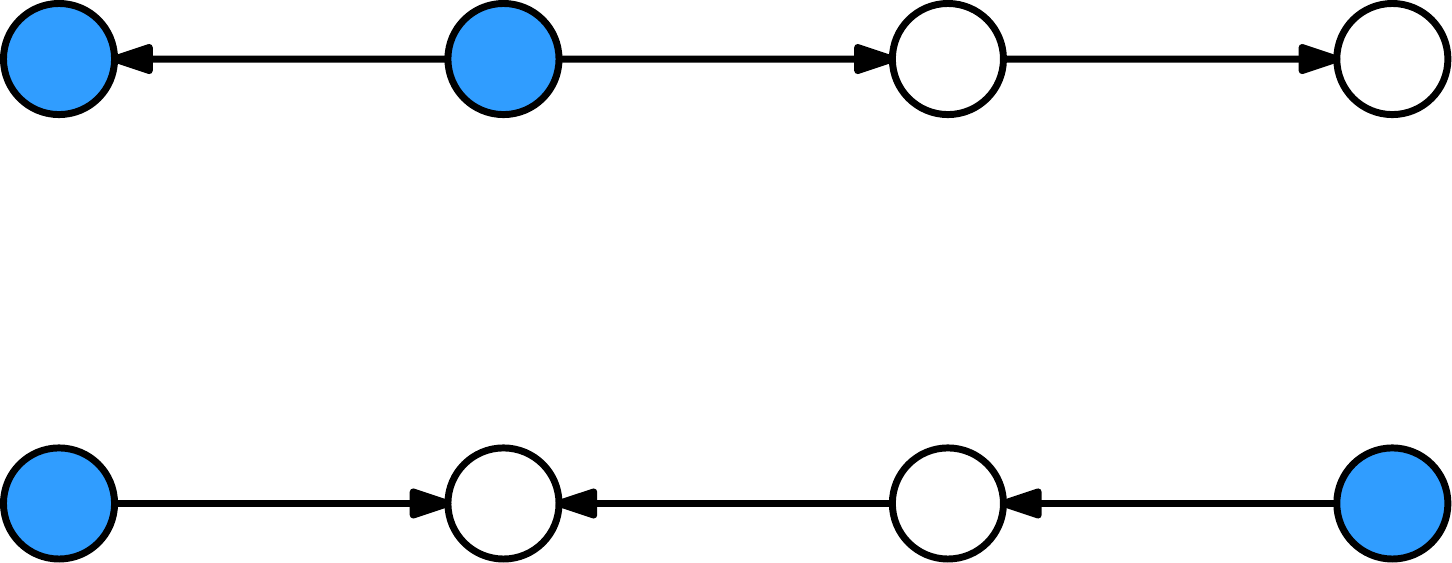}
    \caption{For the graph $\D$ (above), $B$ is the set of blue vertices satisfying $\pt(\D;B)=2$. For $\D^T$ (below), $\pt(\D^T;\term(\F))=1$, where $\F$ is the unique set of forces of $B$ in $\D$.}
    \label{revNotWorking}
\end{figure}

It is easy to see for any digraph $\D$, zero forcing set $B \subseteq V(\D)$, and set of forces $\calf$ of $B$, that $|\term(\calf)| = |B|$ (see \cite{Berliner et al17}). This fact, together with Lemma \ref{reverseprop}, can be used to show that $\pt(\D;B) = \pt(\D^T; \term(\calf))$ holds if $B$ is more carefully chosen. In particular, we obtain our desired equality if $B$ is chosen to have optimal propagation time for its size. This motivates the following definition which generalizes propagation time.


\begin{defn}
For a simple digraph $\D$ and integer $k \geq 0$, define the \emph{$k$-propagation time of $\D$} as $\pt_k(\D) = \min\{\pt(\D; B) ~|~ B \text{ is a zero forcing set and } |B| = k\}$. 
\end{defn}

\begin{prop}\label{terminus}
    Let $\D$ be a simple digraph, $B \subseteq V(\D)$ be a zero forcing set of $\D$ such that $|B|=k$ and $\pt(\D; B)=\pt_{k}(\D)$, and $\F$ be a set of forces of $B$ such that $\pt(\D;\F) = \pt(\D; B)$. Then, $\pt(\D; B)=\pt(\D^T; \term(\F))$.
\end{prop}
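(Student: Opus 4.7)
The plan is to combine Lemma \ref{reverseprop} with the optimality condition $\pt(\D;B) = \pt_k(\D)$ to get the equality via two inequalities. The key fact I will repeatedly use is that $|\term(\calf')| = |B'|$ for any zero forcing set $B'$ with a set of forces $\calf'$, so terminus sets preserve cardinality.

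For the $\leq$ direction on $\pt(\D^T;\term(\calf))$, I would note that $\rev(\calf)$ is itself a valid set of forces of $\term(\calf)$ in $\D^T$. Therefore by definition of $\pt(\D^T;\term(\calf))$ as a minimum over all sets of forces,
\[
\pt(\D^T;\term(\calf)) \leq \pt(\D^T;\rev(\calf)) = \pt(\D;\calf) = \pt(\D;B),
\]
where the middle equality is Lemma \ref{reverseprop} and the last uses the hypothesis that $\calf$ achieves $\pt(\D;B)$.

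For the reverse inequality, I would start from an optimal set of forces $\calf^*$ of $\term(\calf)$ in $\D^T$, i.e., one with $\pt(\D^T;\calf^*) = \pt(\D^T;\term(\calf))$. Applying Lemma \ref{reverseprop} to $\D^T$ (whose transpose is $\D$), we obtain
\[
\pt(\D^T;\calf^*) = \pt(\D;\rev(\calf^*)).
\]
Now $\rev(\calf^*)$ is a set of forces of $\term(\calf^*)$ in $\D$, and $|\term(\calf^*)| = |\term(\calf)| = |B| = k$. Hence $\term(\calf^*)$ is a zero forcing set of $\D$ of size exactly $k$, so by the optimality assumption $\pt(\D;B) = \pt_k(\D)$,
\[
\pt(\D;B) \leq \pt(\D;\term(\calf^*)) \leq \pt(\D;\rev(\calf^*)) = \pt(\D^T;\term(\calf)).
\]
Combining the two inequalities gives the equality.

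The argument is genuinely short once the right quantities are named, and the only subtlety is the symmetry step: one must not try to directly equate $\pt(\D;B)$ with $\pt(\D^T;\term(\calf))$ via a single forcing process, but instead use Lemma \ref{reverseprop} twice (once for $\calf$ and once for the optimal $\calf^*$ in $\D^T$) and let the $k$-propagation time optimality of $B$ close the loop. The main potential pitfall is ensuring $\rev(\calf^*)$ has a terminus of size $k$ in $\D$ so that the hypothesis $\pt(\D;B) = \pt_k(\D)$ can legitimately be invoked; this is exactly where the cardinality identity $|\term(\calf')| = |B'|$ is essential.
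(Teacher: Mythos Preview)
Your proof is correct and follows essentially the same route as the paper: both arguments use Lemma \ref{reverseprop} twice (once for $\calf$ and once for an optimal set of forces of $\term(\calf)$ in $\D^T$) together with the cardinality identity $|\term(\calf')|=|B'|$ and the optimality hypothesis $\pt(\D;B)=\pt_k(\D)$. The only difference is cosmetic: the paper phrases the second inequality as a proof by contradiction, whereas you argue it directly.
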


\begin{proof}
First, we will show that $\pt(\D^T; \rev(\F))= \pt(\D^T; \term(\F))$. Note that by definition, we already know that $\pt(\D^T;\rev(\F)) \geq \pt(\D^T;\term(\F))$. Suppose for the sake of contradiction that $\pt(\D^T; \rev(\F)) > \pt(\D^T; \term(\F))$. Then, there must exist some set of forces $\F'$ of $\term(\F)$ such that $\pt(\D^T; \F') < \pt(\D^T; \rev(\F))$. From Lemma \ref{reverseprop}, we know $\pt(\D^T;\rev(\F)) = \pt(\D; \F)=\pt(\D; B)$, so $\pt(\D^T; \F') < \pt(\D; B)$. However, Lemma \ref{reverseprop} also implies that $\pt(\D^T; \F')=\pt(\D; \rev(\F'))$, so \[\pt(\D; \rev(\F')) = \pt(\D^T; \F') < \pt(\D; B) = \pt_{k}(\D). \] 
We see that the propagation time of $\rev(\F')$ on $\D$ is strictly less than $\pt_{k}(\D)$. However, $\rev(\F')$ is a set of forces of $\term(\F')$ and $|\term(\F')|=k$, so this is a contradiction. Thus, $\pt(\D^T; \term(\F)) = \pt(\D^T; \rev(\F)) = \pt(\D;\F) = \pt(\D;B)$.
\end{proof}

Now, Proposition \ref{terminus} can be used to equate $\throt(\D)$ and $\throt(\D^T)$.

\begin{thm}\label{reversethrot}
For a simple digraph $\D$, 
$\throt(\D) = \throt(\D^T)$.
\end{thm}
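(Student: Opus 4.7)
The plan is to show both inequalities $\throt(\D^T) \leq \throt(\D)$ and $\throt(\D) \leq \throt(\D^T)$ by constructing, from an optimal throttling configuration on one side, a configuration on the other side that matches it. The key ingredients will be Proposition \ref{terminus} (which matches propagation times under transposition provided the zero forcing set has optimal propagation time for its size) and the fact that $|\term(\F)| = |B|$ for any set of forces $\F$ of $B$.

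First I would pick a zero forcing set $B \subseteq V(\D)$ that achieves $\throt(\D) = |B| + \pt(\D;B)$, and let $k = |B|$. The crucial small observation is that we may assume $\pt(\D;B) = \pt_k(\D)$: if not, then some other size-$k$ zero forcing set $B'$ satisfies $\pt(\D;B') < \pt(\D;B)$, whence $\throt(\D;B') < \throt(\D;B) = \throt(\D)$, a contradiction. This puts $B$ in exactly the situation needed to apply Proposition \ref{terminus}.

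Next I would choose a set of forces $\F$ of $B$ in $\D$ with $\pt(\D;\F) = \pt(\D;B)$. By Proposition \ref{terminus}, $\term(\F)$ is a zero forcing set of $\D^T$ satisfying $\pt(\D^T; \term(\F)) = \pt(\D;B)$. Combined with $|\term(\F)| = |B|$, this gives
\[ \throt(\D^T) \leq |\term(\F)| + \pt(\D^T;\term(\F)) = |B| + \pt(\D;B) = \throt(\D). \]
For the reverse inequality, I would apply the exact same argument to $\D^T$ in place of $\D$, using $(\D^T)^T = \D$, to conclude $\throt(\D) \leq \throt(\D^T)$. Combining the two inequalities yields equality.

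The only subtle point, and really the main thing to get right, is the reduction step showing that an optimal throttling set can be taken to have $\pt(\D;B) = \pt_{|B|}(\D)$, since Proposition \ref{terminus} requires that hypothesis. Everything else is a direct bookkeeping application of the stated lemmas, and the transpose symmetry makes the reverse inequality automatic rather than a separate argument.
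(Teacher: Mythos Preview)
Your proposal is correct and follows essentially the same approach as the paper: choose an optimal throttling set $B$, observe that optimality forces $\pt(\D;B)=\pt_{|B|}(\D)$ so that Proposition \ref{terminus} applies, then use $|\term(\F)|=|B|$ to get $\throt(\D^T)\leq\throt(\D)$, and finish by the symmetry $(\D^T)^T=\D$. The paper's proof is organized in the same way and highlights the same reduction step you identified as the key point.
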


\begin{proof}
Let $\D = (V(\D), E(\D))$ be a simple digraph and choose a zero forcing set $B$ and set of forces $\F$ of $B$ such that $\pt(\D;\F) = \pt(\D; B)$ and $\throt(\D) = |B| + \pt(\D;B)$. Therefore, $\pt(\D;B)=\pt_{k}(\D)$ for $k = |B|$; otherwise, $\throt(\D) < |B| + \pt(\D; B)$ which is a contradiction. Thus, $\pt(\D^T;\term(\F)) = \pt(\D; B)$ by Proposition \ref{terminus}. It follows from this fact and the definition of the throttling number that
\begin{eqnarray*}
    \throt(\D^T) &\leq& |\term(\F)| + \pt(\D^T; \term(\F))\\
    & = & |B| + \pt(\D; B)\\
    & = & \throt(\D). 
\end{eqnarray*}
Flipping the roles of $\D$ and $\D^T$, we obtain the reverse inequality $\throt(\D) = \throt \left((\D^T)^T \right) \leq \throt(\D^T)$. Therefore, $\throt(\D) = \throt(\D^T)$.
\end{proof}

The equality in Theorem \ref{reversethrot} can be extended to the case where only a single component of a digraph is transposed. To see this, we first need the following important lemma.

\begin{lem}\label{reverseptk}
    Let $\D$ be a simple digraph with $|V(\D)|=n$, and let $k$ be any integer such that $\Z(\D) \leq k \leq n$. Then, $\pt_{k}(\D)=\pt_{k}(\D^T)$.
\end{lem}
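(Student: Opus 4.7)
The plan is to deduce the lemma more or less directly from Proposition \ref{terminus} together with the fact that $|\term(\F)|=|B|$ for any set of forces $\F$ of $B$. I would establish one inequality and then invoke symmetry using $(\D^T)^T = \D$.

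First I would pick a zero forcing set $B \subseteq V(\D)$ with $|B|=k$ that attains the $k$-propagation time, i.e., $\pt(\D;B) = \pt_k(\D)$, and then choose a set of forces $\F$ of $B$ in $\D$ with $\pt(\D;\F) = \pt(\D;B)$. The hypothesis $\Z(\D) \leq k$ guarantees such a $B$ exists. Because $B$ is chosen to satisfy $\pt(\D;B) = \pt_k(\D)$, the hypothesis of Proposition \ref{terminus} is met, so
\[
\pt(\D^T;\term(\F)) \;=\; \pt(\D;B) \;=\; \pt_k(\D).
\]

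Next I would use the standard fact (recorded earlier from \cite{Berliner et al17}) that $\term(\F)$ is a zero forcing set of $\D^T$ with $|\term(\F)| = |B| = k$. Plugging this into the definition of $\pt_k(\D^T)$ immediately yields
\[
\pt_k(\D^T) \;\leq\; \pt(\D^T;\term(\F)) \;=\; \pt_k(\D).
\]
In particular this also shows that $\Z(\D^T) \leq k$, so a symmetric argument applies with the roles of $\D$ and $\D^T$ swapped: choose a zero forcing set $B' \subseteq V(\D^T)$ of size $k$ realizing $\pt_k(\D^T)$ and a set of forces $\F'$, and apply Proposition \ref{terminus} to $\D^T$, noting that $(\D^T)^T = \D$. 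This gives $\pt_k(\D) \leq \pt_k(\D^T)$, completing the equality.

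I do not anticipate a real obstacle here: the content is packaged in Proposition \ref{terminus} and the identity $|\term(\F)|=|B|$, and the only small subtlety is verifying that the hypotheses of Proposition \ref{terminus} are met for both $\D$ and $\D^T$, which is handled by choosing $B$ and $B'$ to be $\pt_k$-optimal zero forcing sets on each side.
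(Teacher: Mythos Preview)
Your proposal is correct and follows essentially the same approach as the paper: pick a $\pt_k$-optimal zero forcing set $B$ of size $k$, apply Proposition~\ref{terminus} to get $\pt_k(\D)=\pt(\D^T;\term(\F))\geq \pt_k(\D^T)$, and then reverse the roles of $\D$ and $\D^T$. Your write-up is slightly more explicit about why $|\term(\F)|=k$ and why the symmetric argument is permissible, but the content is the same.
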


\begin{proof}
    Let $B$ be a zero forcing set of $\D$ and $\F$ be a set of forces of $B$ such that $|B|=k$ and $\pt(\D;\F)=\pt(\D;B)=\pt_k(\D)$. Then, $\pt_k(\D) = \pt_k(\D; B) = \pt_k(\D^T; \term(\F)) \geq \pt_k(\D^T)$ by Proposition \ref{terminus}. By reversing the roles of $\D$ and $\D^T$, we obtain the reverse inequality.
\end{proof}

\begin{thm}
    Let $\D_1$ and $\D_2$ be simple digraphs. Then, $\throt(\D_1 \cup \D_2)=\throt(\D_1^T \cup \D_2)$.
\end{thm}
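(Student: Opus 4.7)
The plan is to reduce the problem to Lemma \ref{reverseptk} by writing the throttling number of a disjoint union as an explicit minimization over how many blue vertices are placed in each component. Because $\D_1$ and $\D_2$ share no arcs, every zero forcing set $B$ of $\D_1 \cup \D_2$ splits as $B = B_1 \cup B_2$ with $B_i \subseteq V(\D_i)$ a zero forcing set of $\D_i$, and the sets of forces split similarly. Hence the propagation time of $B$ in $\D_1 \cup \D_2$ equals $\max(\pt(\D_1;B_1), \pt(\D_2;B_2))$.

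Using this observation, I would establish the formula
\[
\throt(\D_1 \cup \D_2) \;=\; \min_{\substack{k_1 \geq \Z(\D_1) \\ k_2 \geq \Z(\D_2)}} \Big( k_1 + k_2 + \max\bigl(\pt_{k_1}(\D_1),\, \pt_{k_2}(\D_2)\bigr) \Big),
\]
by picking, for each pair $(k_1,k_2)$, optimal zero forcing sets of the corresponding sizes in each component. The analogous formula holds for $\D_1^T \cup \D_2$ with $\pt_{k_1}(\D_1)$ replaced by $\pt_{k_1}(\D_1^T)$ and $\Z(\D_1)$ replaced by $\Z(\D_1^T)$.

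The final step is to argue these two minimizations produce the same value. By Lemma \ref{reverseptk}, $\pt_{k_1}(\D_1) = \pt_{k_1}(\D_1^T)$ for every $k_1$ in the admissible range. To make the two minimizations range over the same set of $k_1$, I need $\Z(\D_1) = \Z(\D_1^T)$; this follows from the standard fact that for any zero forcing set $B$ of $\D$ with a set of forces $\F$, the terminus $\term(\F)$ is a zero forcing set of $\D^T$ of the same size (noted in the paragraph before Proposition \ref{terminus}), giving $\Z(\D^T) \leq \Z(\D)$, and applying this again to $\D^T$ yields equality. Once the two index sets and the objective functions coincide, the equality of the minima — and hence of the throttling numbers — is immediate.

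The main obstacle is the cleanest one to formalize: verifying that the decomposition into components is tight, i.e., that one can always choose zero forcing sets and sets of forces of each component independently to realize $\pt_{k_1}(\D_1)$ and $\pt_{k_2}(\D_2)$ simultaneously on the disjoint union. This is straightforward because forces in one component never involve vertices of the other, so the time steps of the two components run independently and their overall propagation time is simply the maximum.
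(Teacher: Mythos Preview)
Your proposal is correct and follows essentially the same approach as the paper: both arguments decompose a zero forcing set of the disjoint union into its component pieces, observe that the propagation time is the maximum of the component propagation times, and invoke Lemma \ref{reverseptk} to swap $\pt_{k_1}(\D_1)$ for $\pt_{k_1}(\D_1^T)$. The only cosmetic difference is that you package the argument as an explicit minimization formula and verify $\Z(\D_1)=\Z(\D_1^T)$ to match the index ranges, whereas the paper proves one inequality directly and then reverses the roles of $\D_1$ and $\D_1^T$ to obtain the other, thereby avoiding any appeal to $\Z(\D_1)=\Z(\D_1^T)$.
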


\begin{proof}
    Let $\D = \D_1 \cup \D_2$ and $B$ be a zero forcing set of $\D$ such that $\throt(\D)=\throt(\D;B)$. Note that $\throt(\D;B)=|B_1|+|B_2|+\max \{\pt(\D_1; B_1), \pt(\D_2; B_2)\}$, where $B_1 = B \cap V(\D_1)$ and $B_2 = B \cap V(\D_2)$. Also, let $k_1=|B_1|$. There exists some $B_1' \subseteq V(\D_1^T)$ such that $|B_1'|=k_1$ and $\pt(\D_1^T;B_1')=\pt_{k_1}(\D_1^T)$ by definition of $\pt_{k_1}(\D_1^T)$. By Lemma  \ref{reverseptk}, it follows that $\pt(\D_1^T;B_1') = \pt_{k_1}(\D_1)$. Thus, $\pt(\D_1^T; B_1')=\pt_{k_1}(\D_1)\leq \pt(\D_1;B_1)$ by definition of $\pt_{k_1}(\D_1)$. Hence,
    \begin{eqnarray*}
        \throt(\D_1^T \cup \D_2) &\leq& \throt(\D_1^T \cup \D_2; B_1' \cup B_2)\\&=&|B_1'|+|B_2|+\max \{\pt(\D_1^T; B_1'), \pt(\D_2; B_2)\} \\&\leq& |B_1|+|B_2|+\max \{\pt(\D_1; B_1), \pt(\D_2; B_2)\} ~=~ \throt(\D_1 \cup \D_2). 
    \end{eqnarray*}
    By reversing the roles of $\D$ and $\D^T$, we obtain the reverse inequality.
\end{proof}

Another natural question that arises is whether the throttling number of a directed or undirected graph resulting from a disjoint union ($\cup$) of two graphs can be determined using the throttling numbers of each of the operands. Let $\D_1$ and $\D_2$ be directed or undirected graphs. If $B_1 \subseteq V(\D_1)$ and $B_2 \subseteq V(\D_2)$ are zero forcing sets such that $\throt(\D_1) = \throt(\D_1; B_1)$ and $\throt(\D_2) = \throt(\D_2; B_2)$, it is clear that \[ \throt{(\D_1\cup \D_2)} \leq |B_1| + |B_2| + \max\{\pt(\D_1; B_1), \pt(\D_2; B_2)\}. \]





However, the throttling number of a disjoint union of two digraphs is not necessarily equal to this upper bound, as shown in Example \ref{notUpperBound}.

\begin{ex} \label{notUpperBound}
Let $\vec{G}_1$ be a path on four vertices with all arcs oriented in the same direction and let $\vec{G}_2$ be the disjoint union of two copies of $\vec{G}_1$. By considering all zero forcing sets of $\vec{G}_1$ and $\vec{G}_2$ respectively, we see that $\throt(\vec{G}_1) = 3$ and $\throt(\vec{G}_2) = 5$. Zero forcing sets $B_1$ and $B_2$ that realize these respective throttling numbers are shown in blue on the left of Figure \ref{disjointpaths}. However, a zero forcing set $B \subseteq \vec{G}_1 \cup \vec{G}_2$ that realizes $\throt(\vec{G}_1 \cup \vec{G}_2; B) = 6$ is shown in blue on the right of Figure \ref{disjointpaths}. Therefore, \[ \throt(\vec{G}_1 \cup \vec{G}_2) \leq 6 < 2 + 2 + 3 = |B_1| + |B_2| + \max\{\pt(\vec{G}_1; B_1), \pt(\vec{G}_2; B_2) \}. \]


\begin{figure}[H]
   \centering
   \begin{subfigure}[b]{0.4\textwidth}
       \includegraphics[width=\textwidth]{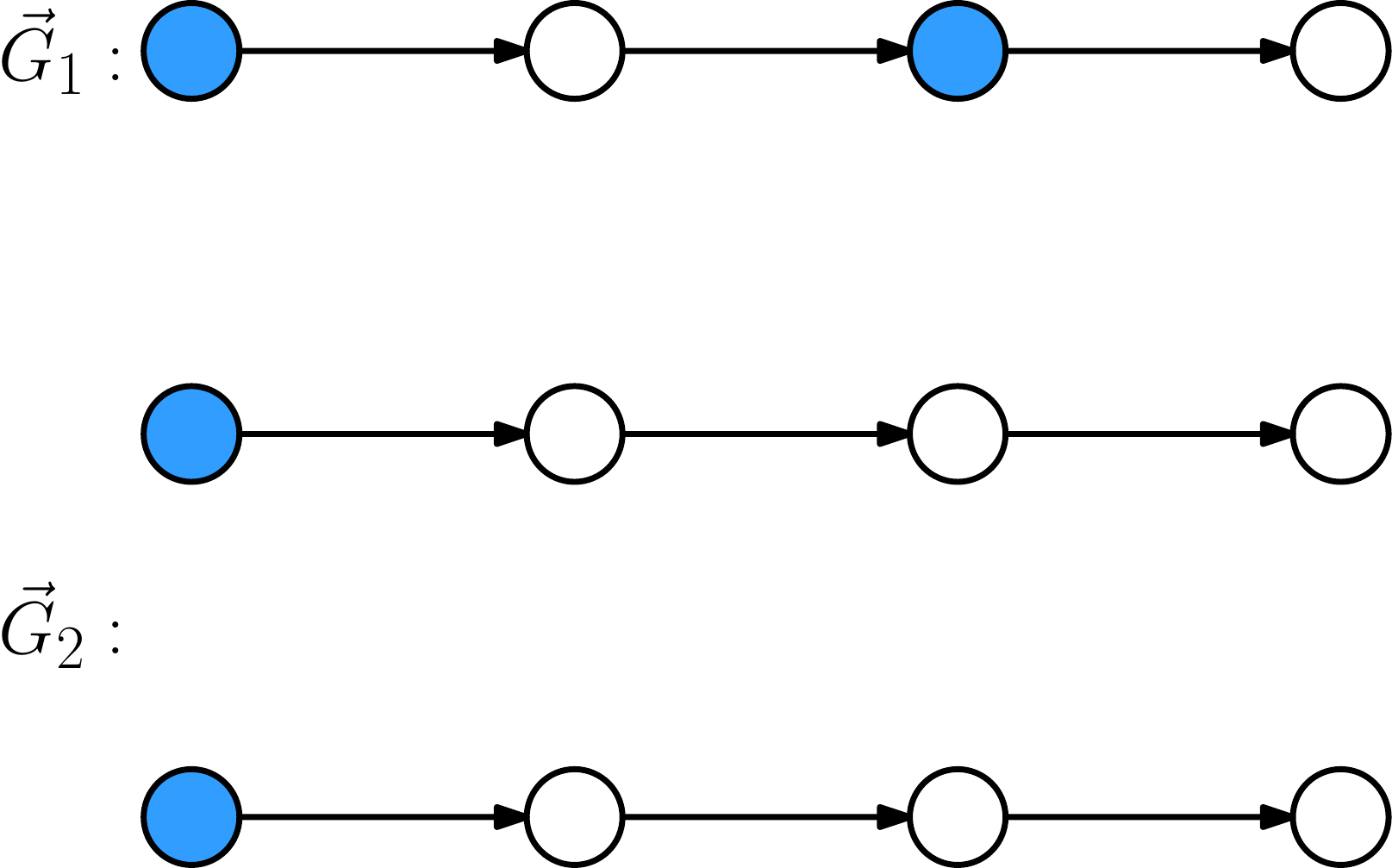}
   \end{subfigure}
   \hspace{1 cm}
   \begin{subfigure}[b]{0.45\textwidth}
       \includegraphics[width=\textwidth]{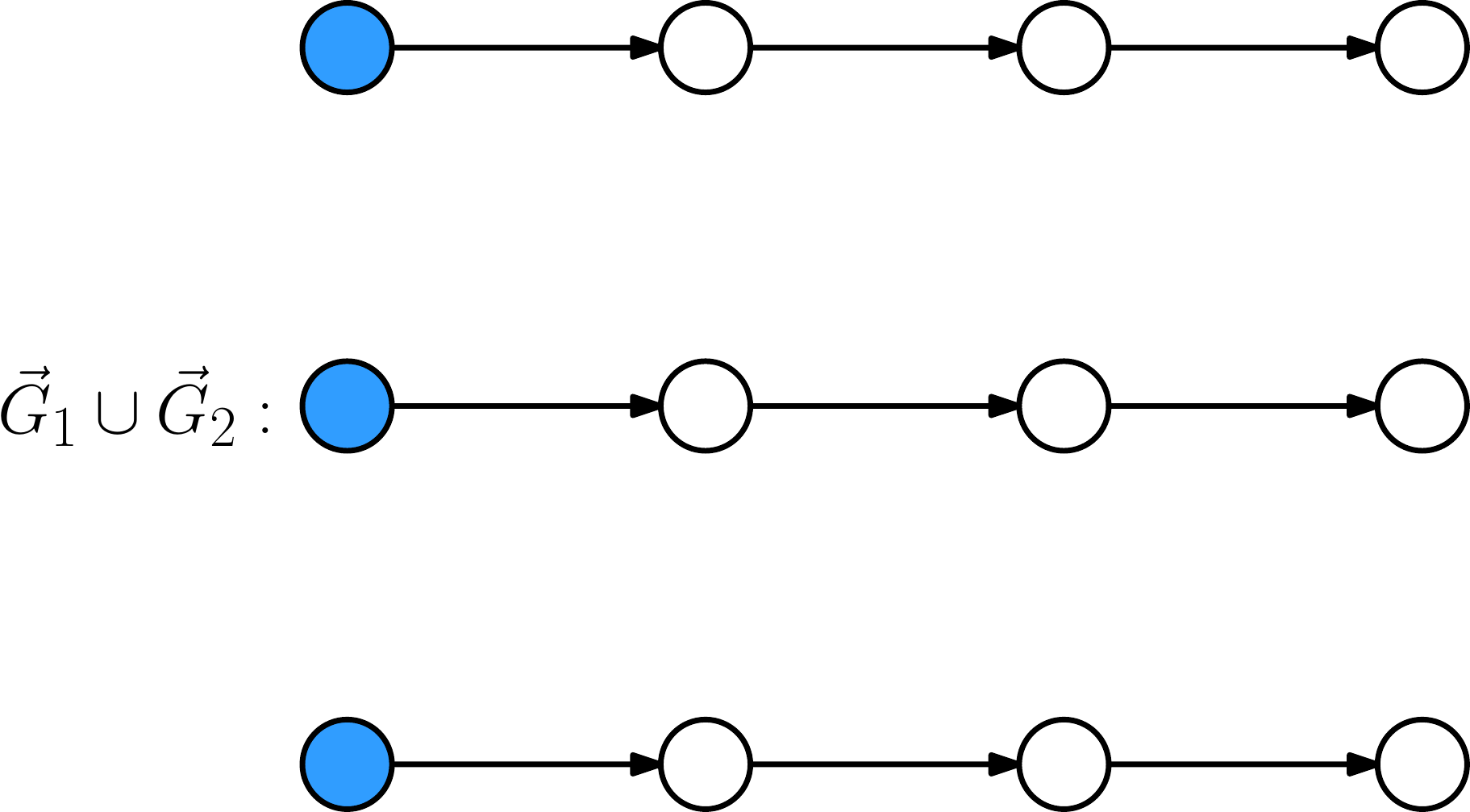}
    \end{subfigure}
\caption{Digraphs $\vec{G}_1$ and $\vec{G}_2$ are shown with $\throt(\vec{G}_1) = 3$, $\throt(\vec{G}_2) = 5$, and $\throt (\vec{G}_1 \cup \vec{G}_2) \leq 6$.} \label{disjointpaths}
\end{figure}
\end{ex}


The final digraph operation we consider is adding or deleting a vertex in a digraph and its effect on the throttling number.

\begin{prop}
    Adding or deleting a vertex can change the throttling number of a simple digraph by at most one.
\end{prop}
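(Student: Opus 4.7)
Since adding a vertex to $\D$ is the reverse of deleting one, it suffices to prove that deleting a vertex $v$ from a simple digraph $\D$ changes the throttling number by at most one. The plan is to establish both inequalities $\throt(\D) \leq \throt(\D - v) + 1$ and $\throt(\D - v) \leq \throt(\D) + 1$ separately; in each direction the strategy is to convert an optimal zero forcing set together with an optimal set of forces on one graph into a zero forcing set (with a set of forces) of the other, of size at most one larger and with propagation time no larger.

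For $\throt(\D) \leq \throt(\D - v) + 1$, I would start with a zero forcing set $B'$ and set of forces $\F'$ realizing $\throt(\D - v)$, take $B = B' \cup \{v\}$, and argue that $\F'$ works in $\D$ at the same time steps. The key observation is that for each vertex $u \neq v$, the out-neighborhood $N_\D^+(u)$ differs from $N_{\D-v}^+(u)$ by at most $\{v\}$, and since $v \in B$ is blue from time $0$, it never acts as a white out-neighbor. Consequently the unique-white-out-neighbor condition for each force in $\F'$ is still met at the same moment in $\D$, which yields $\pt(\D; B) \leq \pt(\D - v; B')$.

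For $\throt(\D - v) \leq \throt(\D) + 1$, I would take an optimal pair $(B, \F)$ for $\D$ and carry out a case analysis depending on whether $v \in B$ and whether $v$ performs a force in $\F$. If $v \in B$ performs no force, set $B' = B \setminus \{v\}$ and $\F' = \F$; if $v \in B$ with $(v \to v_1) \in \F$, set $B' = (B \setminus \{v\}) \cup \{v_1\}$ and $\F' = \F \setminus \{v \to v_1\}$; if $v \notin B$ is forced by some $u_0$ but performs no force, set $B' = B$ and $\F' = \F \setminus \{u_0 \to v\}$; and if $v \notin B$ is forced by $u_0$ and $(v \to v_1) \in \F$, set $B' = B \cup \{v_1\}$ and $\F' = \F \setminus \{u_0 \to v,\ v \to v_1\}$. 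In every case $|B'| \leq |B| + 1$, no remaining force involves $v$, and since $N_{\D - v}^+(u) \subseteq N_\D^+(u)$ for every $u \neq v$, the unique-white-out-neighbor condition at each time step is only easier to satisfy in $\D - v$ than in $\D$.

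The hard part will be verifying, in the last two cases above, that deleting $v$ from the middle of a forcing chain does not lengthen propagation. The fix is to promote $v_1$, the successor of $v$ on its chain, into $B'$, so that $v_1$ is blue from time $0$ rather than at the later step at which it would originally have been forced. A short induction on the time step then shows that every force in $\F'$ is executed in $\D - v$ no later than the corresponding force in $\D$, giving $\pt(\D - v; B') \leq \pt(\D; B)$ and hence the desired inequality.
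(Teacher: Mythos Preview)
Your proposal is correct and follows essentially the same approach as the paper. The paper organizes the two inequalities as ``adding a vertex can raise $\throt$ by at most one'' and ``deleting a vertex can raise $\throt$ by at most one'' (then invokes symmetry), whereas you phrase both as inequalities about deletion; but the constructions are identical---in particular, the paper's single-sentence recipe (remove any force into $v$, remove any force $v\to w$ while adding $w$ to $B$, and remove $v$ from $B$ if present) unpacks exactly into your four cases.
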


\begin{proof}
    Let $\D$ be a simple digraph with a zero forcing set $B\subseteq V(\D)$ and set of forces $\F$ of $B$ such that $\throt(\D)=|B| + \pt(\D; \F)$.
    
    First, suppose a vertex $v$, along with a possibly empty set $A$ of arcs incident to $v$, is added to $\D$. Color $B$ $\cup$ $\{v\}$ blue. Then the same set of forces $\F$ can be used without increasing propagation time, since no vertices from the original digraph gain any white out-neighbors when $v$ and $A$ are added. Hence, throttling number can increase by at most one when a vertex is added, which implies that throttling number can decrease by at most one when deleting a vertex.
    
    Now, suppose instead that a vertex $v$ is deleted from $\D$ to obtain $\D'$. If $(u \rightarrow v) \in \F$ for some $u \in V(\D)$, then remove the force $u \rightarrow v$ from $\calf$. Also, if $(v \rightarrow w) \in \calf$ for some $w \in V(\D)$, remove the force $v \to w$ from $\F$ and add $w$ to $B$. Additionally, if $v \in B$, then remove $v$ from $B$. Call the resulting set of forces $\F'$ and set of vertices $B'$. Now color $B'$ blue in $\D'$. If the vertex $w$ exists, it can no longer be forced by $v$. However, in this case, $w$ is colored blue at time $t=0$. After deleting $v$, no vertices gain any white out-neighbors, so it is clear that $\pt(\D'; \F') \leq \pt(\D; \calf)$. Hence, the throttling number can increase by at most one when a vertex is deleted, which implies that throttling number can decrease by at most one when adding a vertex.
\end{proof}

\section{Throttling for orientations of simple graphs}\label{sec:throttlingorientations}

 For a given undirected graph, there are many ways to direct the edges and create an oriented graph. Naturally, the zero forcing number, propagation time, and throttling number vary among different orientations. These ranges of zero forcing numbers and propagation times are studied in \cite{ISUREU} and \cite{Berliner et al17} respectively. In this section, we investigate the range of throttling throttling numbers achieved by the orientations of a given simple graph. This idea is formalized in the following definition. 

\begin{defn}\label{def:OTI}
For a simple graph $G$, let $T = \{ \throt(\vec{G}) ~|~ \vec{G} \text{ is an orientation of } G\}$. The \emph{orientation throttling interval of $G$}, denoted $\oti(G)$, is the set of integers in the interval $[m,M]$ where $m$ and $M$ are the minimum and maximum values of $T$ respectively.
The graph $G$ is said to have a \emph{full} orientation throttling interval if $k \in T$ for each integer $k \in \oti(G)$.
\end{defn}

This new terminology naturally raises the question: which graphs, if any, have full orientation throttling intervals? We show that every simple graph has a full orientation throttling interval whose length is bounded in terms of the number of edges in the graph. We do so by stating an existing result from \cite{ISUREU} for general graph parameters and then applying this result to the throttling number using our findings in Section \ref{subsec:otherops}.

\begin{thm} \label{isureu}
\cite[Theorem 2.1]{ISUREU} Suppose $\beta$ is a positive-integer-valued digraph parameter with the following properties:
\begin{enumerate}
    \item $\beta(\vec{G}^T) = \beta(\vec{G})$
    
    \item
    If $(u,v) \in E(\vec{G})$ and $\vec{G}_0$ is obtained from $\vec{G}$ by replacing $(u,v)$ by $(v,u)$ (i.e. reversing the orientation of one arc), then $|\beta(\vec{G}_0)- \beta(\vec{G})| \leq 1$.
\end{enumerate}
Then for any two orientations $\vec{G}_1$ and $\vec{G}_2$ of the same graph $G$, $\beta(\vec{G}_2) - \beta( \vec{G}_1) \leq \floor{\frac{E(G)}{2}}$. Furthermore, every integer between $\beta(\vec{G}_2)$ and $\beta(\vec{G}_1)$ is attained as $\beta(\vec{G})$ for some orientation $\vec{G}$ of $G$.
\end{thm}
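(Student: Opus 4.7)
The plan is to apply both hypotheses on $\beta$: property 2 gives a discrete intermediate-value behavior along any sequence of single arc flips, while property 1 lets me replace a long flip-sequence by its shorter complement via the transpose.

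First, I would fix two orientations $\vec{G}_1$ and $\vec{G}_2$ of $G$, and let $k$ denote the number of edges of $G$ on which $\vec{G}_1$ and $\vec{G}_2$ disagree. I would list these $k$ edges in any order and flip them one at a time starting from $\vec{G}_1$, producing a sequence of orientations $\vec{G}_1 = \vec{H}_0, \vec{H}_1, \ldots, \vec{H}_k = \vec{G}_2$. Since $G$ is simple and each $\vec{H}_i$ is an orientation, each step is a legal single-arc reversal to which property 2 applies. Thus $|\beta(\vec{H}_{i+1}) - \beta(\vec{H}_i)| \leq 1$ for every $i$, and in particular $|\beta(\vec{G}_2) - \beta(\vec{G}_1)| \leq k$. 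Because $\beta$ is integer-valued and consecutive terms in the sequence differ by at most $1$, every integer in the closed interval between $\beta(\vec{G}_1)$ and $\beta(\vec{G}_2)$ must be attained as $\beta(\vec{H}_j)$ for some $j$, which already yields the ``every intermediate integer is attained'' conclusion.

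Next, I would invoke property 1 to sharpen $k$ to $\lfloor |E(G)|/2 \rfloor$. The transpose $\vec{G}_2^T$ is itself an orientation of $G$, and it agrees with $\vec{G}_1$ on precisely those edges where $\vec{G}_2$ disagreed with $\vec{G}_1$; hence $\vec{G}_1$ and $\vec{G}_2^T$ differ in exactly $|E(G)| - k$ arcs. Repeating the flip-sequence argument from $\vec{G}_1$ to $\vec{G}_2^T$ gives $|\beta(\vec{G}_2^T) - \beta(\vec{G}_1)| \leq |E(G)| - k$, and property 1 lets me replace $\beta(\vec{G}_2^T)$ by $\beta(\vec{G}_2)$. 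Combining the two inequalities yields $|\beta(\vec{G}_2) - \beta(\vec{G}_1)| \leq \min\{k,\, |E(G)|-k\} \leq \lfloor |E(G)|/2 \rfloor$.

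The conceptual hurdle is recognizing that the factor $1/2$ in the bound is exactly the gain from combining property 1 with property 2: without the transpose trick, a naive flip-sequence only yields the weaker bound $|E(G)|$. Everything else --- constructing the flip-sequence, checking that intermediate orientations remain simple, and extracting the discrete intermediate-value conclusion from integrality --- is routine once this observation is in place. One minor point to double-check is that whichever of the two sequences is shorter, it still sweeps out all intermediate integer values; this is fine because the ``every integer attained'' claim was already secured by the first (direct) flip-sequence regardless of which sequence gives the tighter bound.
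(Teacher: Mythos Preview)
Your argument is correct and is essentially the standard proof of this result. Note, however, that the paper does not actually prove Theorem~\ref{isureu}: it is quoted verbatim from \cite[Theorem~2.1]{ISUREU} and used as a black box in the proof of Corollary~\ref{fullthrot}. So there is no ``paper's own proof'' to compare against here. Your write-up --- build a flip-sequence between $\vec{G}_1$ and $\vec{G}_2$, use property~2 for the discrete intermediate-value step, then invoke property~1 on $\vec{G}_2^T$ to halve the bound --- is precisely the argument one would expect in the cited source, and every step is sound.
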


\begin{cor}\label{fullthrot}
     Let $G$ be a simple graph with orientation throttling interval $[m, M]$. Then, \[M-m \leq \floor{\frac{E(G)}{2}}.\] Furthermore, the orientation throttling interval of $G$ is full. 
\end{cor}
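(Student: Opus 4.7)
The plan is to invoke Theorem \ref{isureu} directly with $\beta = \throt$, using the two properties of the throttling number that have already been established in Section \ref{subsec:otherops}. First I would observe that the throttling number is a positive-integer-valued digraph parameter (since $\throt(\vec{G}) \geq 1$ whenever $V(\vec{G}) \neq \emptyset$), so $\throt$ is an eligible candidate for the role of $\beta$.

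Next I would verify the two hypotheses of Theorem \ref{isureu} for $\throt$ restricted to orientations. For hypothesis (1), Theorem \ref{reversethrot} gives $\throt(\vec{G}^T) = \throt(\vec{G})$ for every simple digraph, in particular for every orientation. For hypothesis (2), note that because an orientation $\vec{G}$ of $G$ contains no double arcs, for every arc $(u,v) \in E(\vec{G})$ we automatically have $(v,u) \notin E(\vec{G})$; hence Corollary \ref{arcflip} applies and gives $|\throt(\vec{G}_0) - \throt(\vec{G})| \leq 1$ whenever $\vec{G}_0$ is obtained from $\vec{G}$ by flipping a single arc. Crucially, the reversal of an arc in an orientation of $G$ is again an orientation of $G$, so the chain of single-arc flips provided by Theorem \ref{isureu} stays inside the set of orientations of $G$ throughout.

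With both hypotheses verified, I would apply Theorem \ref{isureu} to orientations $\vec{G}_1$ and $\vec{G}_2$ of $G$ achieving $\throt(\vec{G}_1) = m$ and $\throt(\vec{G}_2) = M$. The inequality in Theorem \ref{isureu} yields
\[
M - m \;=\; \throt(\vec{G}_2) - \throt(\vec{G}_1) \;\leq\; \floor{\tfrac{E(G)}{2}},
\]
which is the first claim. For fullness, the second conclusion of Theorem \ref{isureu} guarantees that every integer between $\throt(\vec{G}_1) = m$ and $\throt(\vec{G}_2) = M$ is realized as $\throt(\vec{G})$ for some orientation $\vec{G}$ of $G$, so every $k \in [m,M]$ lies in the set $T$ of Definition \ref{def:OTI}, meaning $\oti(G)$ is full.

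I do not anticipate a substantive obstacle: this corollary is essentially a black-box application of Theorem \ref{isureu}, and all the real work was done in Theorem \ref{reversethrot} and Corollary \ref{arcflip}. The only point that requires a moment of care is checking that the hypothesis $(b,a) \notin E(\vec{G})$ of Corollary \ref{arcflip} is free once we restrict attention to orientations, which is immediate from the definition of an orientation.
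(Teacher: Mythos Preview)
Your proposal is correct and matches the paper's proof essentially verbatim: verify hypotheses (1) and (2) of Theorem \ref{isureu} via Theorem \ref{reversethrot} and Corollary \ref{arcflip}, then apply Theorem \ref{isureu} to orientations realizing $m$ and $M$. Your added remarks about orientations having no double arcs and being closed under arc flips are helpful clarifications that the paper leaves implicit.
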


\begin{proof}
    Let $\vec{G}$ be any orientation of $G$ and $\vec{G}_0$ be any graph obtained from $\vec{G}$ by a single arc flip. Also, let $\vec{G}_m$ and $\vec{G}_M$ be orientations of $G$ such that $\throt(\vec{G}_m) = m$ and $\throt(\vec{G}_M) = M$. By Theorem \ref{reversethrot} and Corollary \ref{arcflip}, we have $\throt(\vec{G})=\throt(\vec{G}^T)$ and $|\throt(\vec{G})-\throt(\vec{G}_0)| \leq 1$. It follows from Theorem \ref{isureu} that $M-m \leq \floor{\frac{E(G)}{2}}$ and that $\oti(G)$ is full.
\end{proof}

Another question that the orientation throttling interval raises is whether the throttling number of the underlying simple graph is an element of its orientation throttling interval. In other words, is it true that for a graph $G$, there always exists some orientation $\vec{G}$ such that $\throt(\vec{G})=\throt(G)$? We show that this is the case for graphs $G$ that satisfy a constraint in terms of their independence number $\alpha(G)$. Recall that if $G$ is a graph, an independent set of $G$ is a subset of $V(G)$ that induces a subgraph of $G$ with no edges and $\alpha(G)$ is the size of a maximum independent set of $G$.

\begin{prop}\label{subset}
    Let $G$ be a simple graph with at least one edge and $\oti(G)=[m,M]$. Then, $m \leq \throt(G)$ and $\alpha (G) + 1 \leq M$. 
\end{prop}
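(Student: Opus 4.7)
The proposition decomposes into two inequalities, and for each I would exhibit an explicit orientation of $G$ witnessing the bound. For $m \le \throt(G)$, the plan is to take an optimal throttling configuration on $G$ and build from it an orientation with throttling number no larger than $\throt(G)$. Fix a zero forcing set $B$ of $G$ with $\throt(G) = |B| + \pt(G;B)$, a set of forces $\calf$ of $B$ in $G$ with $\pt(G;\calf) = \pt(G;B)$, and for each $v \in V(G)$ let $t(v)$ be the unique index with $v \in \calf\up{t(v)}$. Orient each edge $\{u,v\}$ of $G$ by the rule: if $\calf$ contains the force $u \to v$, orient it as $u \to v$; otherwise orient it from the endpoint with larger $t$-value to the endpoint with smaller $t$-value, breaking ties arbitrarily. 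Call the resulting orientation $\vec{G}$. The key check is that every force $u \to w$ of $\calf$ is still valid in $\vec{G}$ at time $t(u)$: the out-neighbors of $u$ in $\vec{G}$ consist of $w$ together with some vertices $z$ satisfying $t(z) \le t(u)$, all of which are already blue at time $t(u)$. Hence $\calf$ propagates in $\vec{G}$ on the same schedule as in $G$, giving $\throt(\vec{G}) \le |B| + \pt(G;B) = \throt(G)$, and therefore $m \le \throt(G)$.

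For the second inequality $\alpha(G)+1 \le M$, the plan is to orient $G$ so that a maximum independent set is forced into every zero forcing set. Fix a maximum independent set $I$ with $|I| = \alpha(G)$; since $G$ contains an edge, $I \neq V(G)$. Orient every edge between $I$ and $V(G) \setminus I$ from $I$ to $V(G) \setminus I$, and orient the remaining edges (those inside $V(G) \setminus I$) arbitrarily, to obtain an orientation $\vec{G}$. Because $I$ is independent, each vertex of $I$ has in-degree $0$ in $\vec{G}$ and is therefore a source; since a source can become blue only by lying in the initial set, every zero forcing set of $\vec{G}$ contains $I$. For any zero forcing set $B$ of $\vec{G}$, either $|B| > |I|$, in which case $\throt(\vec{G};B) \ge \alpha(G)+1$ is immediate, or $|B| = |I|$ and hence $B = I$; in the latter case $V(G) \setminus I \neq \emptyset$ forces $\pt(\vec{G};I) \ge 1$, so again $\throt(\vec{G};I) \ge \alpha(G)+1$. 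Minimizing over $B$ yields $M \ge \throt(\vec{G}) \ge \alpha(G)+1$.

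The main obstacle is the validity check for the orientation in the first part: showing that no unused edge at $u$ contributes a white out-neighbor at time $t(u)$. This follows directly from the orientation rule together with the standard fact that in standard zero forcing each vertex performs at most one force, but careful bookkeeping around ties in $t$-values, around vertices of $B$ (for which $t(\cdot) = 0$), and around the observation that at most one of $u \to v$ and $v \to u$ can lie in $\calf$ is worth making explicit. The second part, once the orientation is correctly specified, is essentially just the observation that sources must lie in every zero forcing set and that a nonempty white complement demands at least one time step.
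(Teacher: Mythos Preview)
Your proposal is correct and matches the paper's approach closely, especially in the second inequality, where both you and the paper make a maximum independent set into sources and observe that sources must lie in every zero forcing set while at least one non-source remains.

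For the first inequality there is a minor difference worth noting. You orient each non-force edge from the endpoint with larger $t$-value to the one with smaller $t$-value, and then verify that a forcing vertex $u$ has only already-blue out-neighbors apart from the vertex it forces. The paper takes a shortcut: it orients non-force edges \emph{arbitrarily} and simply invokes $N^+_{\vec G}(v) \subseteq N_G(v)$. Since every white out-neighbor of $u$ in $\vec G$ is in particular a white neighbor of $u$ in $G$, validity of $u \to w$ in $G$ at a given time immediately yields validity in $\vec G$ at the same time. Your careful orientation is not wrong, but the extra bookkeeping about $t$-values, ties, and ``at most one force per vertex'' is unnecessary once you notice that the containment $N^+_{\vec G}(v) \subseteq N_G(v)$ does all the work regardless of how the non-force edges are oriented.
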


\begin{proof}
    Let $B$ be a zero forcing set of $G$, and let $\F$ be a set of forces of $B$ that achieves $\throt(G)=|B|+\pt(G;\F)$. To prove that $ m \leq \throt(G)$, it suffices to show that there exists an orientation $\vec{G}$ of $G$ such that $\throt(\vec{G}) \leq \throt(G)$. We begin by orienting each edge in $G$ as follows. If $(u \to v) \in \F$, orient the edge between these vertices going from $u$ to $v$; otherwise, orient the edge in an arbitrary direction. Call the resulting oriented graph $\vec{G}$. Note that $\pt(\vec{G};\F) \leq \pt(G;\F)$ since the arcs in $\F$ exist in $\vec{G}$ and $N^+_{\vec{G}}(v) \subseteq N_G(v)$ for any vertex $v$. Thus,
    \[m \leq \throt(\vec{G}) \leq |B| + \pt(\vec{G};
    \F)  \leq |B| + \pt(G;\F) = \throt(G).\] 
    
    Now, we show that $\alpha (G) + 1 \leq M$ by constructing an orientation of $G$ that has throttling number $\alpha (G) +1$ or greater. To do so, make each vertex in a maximum independent set of $G$ a source. This is possible because no two vertices in an independent set have an edge between each other. Call some orientation that satisfies this condition $\vec{G}'$. Then, any zero forcing set of $\vec{G}'$ must include the $\alpha(G)$ sources. Further, since $G$ has at least one edge and it is impossible for both vertices incident to an edge to be sources, there is at least one vertex in $\vec{G}'$ that is not a source. Thus, $\throt(\vec{G}') > \alpha(G)$ and therefore $\alpha(G) + 1 \leq \throt(\vec{G}') \leq M$.
\end{proof}

\begin{cor} \label{subsetcor}
    Let $G$ be a simple graph with at least one edge. If $\throt (G) \leq \alpha(G) + 1$, then \[ [\throt (G), \alpha (G) +1] \subseteq \oti(G). \]
\end{cor}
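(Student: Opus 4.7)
The plan is to observe that this corollary follows almost immediately by combining both inequalities of Proposition \ref{subset} with the hypothesis, using the definition of $\oti(G)$ as the set of integers in an interval.

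First I would introduce the notation $m$ and $M$ for the minimum and maximum values in $\oti(G)$ so that by Definition \ref{def:OTI} we have $\oti(G) = \{k \in \ZZ : m \leq k \leq M\}$. Next I would apply Proposition \ref{subset} to obtain the two inequalities $m \leq \throt(G)$ and $\alpha(G) + 1 \leq M$. Chaining these with the standing hypothesis $\throt(G) \leq \alpha(G) + 1$ yields
\[
m \;\leq\; \throt(G) \;\leq\; \alpha(G) + 1 \;\leq\; M.
\]
Consequently any integer $k$ with $\throt(G) \leq k \leq \alpha(G) + 1$ also satisfies $m \leq k \leq M$, and so lies in $\oti(G)$ by definition. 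This gives the desired containment $[\throt(G), \alpha(G)+1] \subseteq \oti(G)$.

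There is no substantive obstacle here, since the two endpoint bounds of Proposition \ref{subset} already sandwich $\throt(G)$ and $\alpha(G)+1$ inside $[m,M]$; the only subtlety worth noting is that $\oti(G)$ is by definition the full integer interval $[m,M]$ (and in fact Corollary \ref{fullthrot} guarantees that every integer in this interval is actually realized as the throttling number of some orientation of $G$, so the containment is not merely set-theoretic but orientation-realizable).
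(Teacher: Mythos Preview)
Your proposal is correct and matches the paper's approach: the paper states Corollary~\ref{subsetcor} without proof, treating it as an immediate consequence of the two inequalities in Proposition~\ref{subset}, which is precisely what you do. Your parenthetical remark about Corollary~\ref{fullthrot} is a nice observation but, as you note, not needed for the containment since $\oti(G)$ is by definition the full integer interval $[m,M]$.
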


Corollary \ref{subsetcor} gives an interval of integers that must be contained the OTI of a given graph if a certain inequality is satisfied. Another way to understand the orientation throttling interval is in terms of its maximum length. Proposition \ref{vertbounds} gives an interval of integers that always contains the OTI as a subset.

\begin{prop}\label{vertbounds}
    If $G$ is a simple graph on $n$ vertices, $\oti(G) \subseteq \left [ \ceil{2\sqrt{n} - 1}, n \right]$.
\end{prop}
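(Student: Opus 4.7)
The plan is to prove the two containments separately: $\throt(\vec G)\le n$ for every orientation $\vec G$, and $\throt(\vec G)\ge \lceil 2\sqrt{n}-1\rceil$ for every orientation $\vec G$.

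For the upper bound, I would simply take $B=V(\vec G)$. Then $B$ is trivially a zero forcing set, $\pt(\vec G;B)=0$, and $\throt(\vec G)\le|B|+\pt(\vec G;B)=n$. So every orientation contributes a throttling number that is at most $n$.

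For the lower bound I would use the standard ``chain-counting'' argument. Fix any orientation $\vec G$, any zero forcing set $B\subseteq V(\vec G)$, and any set of forces $\calf$ of $B$ with $\pt(\vec G;\calf)=\pt(\vec G;B)$. The maximal forcing chains of $\calf$ partition $V(\vec G)$ into $|B|$ chains, one beginning at each vertex of $B$. Within a single chain $v_1\to v_2\to\cdots\to v_k$, the force $v_i\to v_{i+1}$ can only be performed after $v_i$ is blue, so if $v_i$ is blue at time $t_i$ then $v_{i+1}$ is blue no earlier than time $t_i+1$. Consequently each chain contributes at most $t+1$ blue vertices to $\calf^{[t]}$, and hence $|\calf^{[t]}|\le |B|(t+1)$ for every $t\ge 0$.

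Setting $t=\pt(\vec G;B)$ forces $|B|(\pt(\vec G;B)+1)\ge n$, so
\[
\throt(\vec G;B)=|B|+\pt(\vec G;B)\ge |B|+\frac{n}{|B|}-1.
\]
The real function $x\mapsto x+\tfrac{n}{x}-1$ attains its minimum $2\sqrt n-1$ at $x=\sqrt n$ by AM--GM, so $\throt(\vec G;B)\ge 2\sqrt n -1$. Since $\throt(\vec G;B)$ is an integer, $\throt(\vec G;B)\ge \lceil 2\sqrt n -1\rceil$. Taking the minimum over all zero forcing sets $B$ gives $\throt(\vec G)\ge\lceil 2\sqrt n -1\rceil$, which, combined with the upper bound $\throt(\vec G)\le n$, yields $\oti(G)\subseteq[\lceil 2\sqrt n -1\rceil,\,n]$.

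I don't anticipate a real obstacle here: the only point that deserves care is the per-step growth bound $|\calf^{[t]}|\le|B|(t+1)$, which must be justified using the structure of forcing chains rather than by a naive ``each blue vertex forces one new vertex'' argument (the latter would give an exponential bound). Writing the chain argument cleanly for the directed setting is the one step to get right.
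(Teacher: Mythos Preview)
Your proof is correct and is essentially the paper's approach. The paper handles the upper bound identically by taking $B=V(\vec G)$, and for the lower bound it simply cites the Butler--Young inequality $\throt(G)\ge\lceil 2\sqrt{n}-1\rceil$ from \cite{BY}, remarking that the argument does not depend on edges being undirected; the chain-counting argument you wrote out is exactly that Butler--Young argument made explicit for digraphs.
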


\begin{proof}
    Given any oriented graph $\vec{G}$, coloring all vertices blue at the start yields throttling value $n$, so $\throt(\vec{G}) \leq n$. Also, for an undirected graph $G$, $\throt(G) \geq \ceil{2 \sqrt{n} - 1}$ from \cite{BY}. While this result is proven for undirected graphs, the same logic applies for oriented graphs since the argument does not rely on the edges being undirected.
\end{proof}

Propostion \ref{vertbounds} gives a lower and upper bound for the throttling number of any orientation of a graph in terms of the number of vertices. This is more useful than the length bound given by Corollary \ref{fullthrot} when the number of edges in a graph is large, as with complete graphs. In fact, the following proposition states that complete graphs can always be oriented to show that the bounds in Proposition \ref{vertbounds} are tight. For simplicity, if $G$ is a graph on $n$ vertices and $\oti(G) = \left [ \ceil{2\sqrt{n} - 1}, n \right]$, we say that the OTI of $G$ is \emph{maximum}. 

\begin{thm}\label{compmax}
The orientation throttling interval of a complete graph is maximum. 
\end{thm}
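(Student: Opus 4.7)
The plan is to realize the two endpoints $\lceil 2\sqrt{n}-1\rceil$ and $n$ of the interval in Proposition \ref{vertbounds} by explicit orientations of $K_n$, and then invoke Corollary \ref{fullthrot} to obtain every integer in between.

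For the upper endpoint, I would use the transitive tournament $T_n$ on vertices $v_1,\ldots,v_n$ with arcs $(v_i,v_j)$ whenever $i<j$. The key claim is that at most one vertex can be forced per time step in $T_n$: if distinct vertices $v_a,v_b$ with $a<b$ were both forced at some time $t$, then the vertex $v_i$ that forces $v_a$ at time $t$ would satisfy $i<a<b$, so $v_b$ would be an out-neighbor of $v_i$; but $v_a$ is the unique white out-neighbor of $v_i$ just before time $t$, which forces $v_b$ to be blue before time $t$, contradicting that $v_b$ is forced at time $t$. Consequently $\pt(T_n;B)\geq n-|B|$ for every zero forcing set $B$, giving $\throt(T_n)\geq n$; together with the trivial bound $\throt(T_n)\leq n$, this yields $\throt(T_n)=n$.

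For the lower endpoint, I would mimic the optimal partition-into-paths strategy behind the throttling lower bound of Butler and Young \cite{BY}. Choose positive integers $k,\ell$ with $k\ell\geq n$ and $k+\ell-1=\lceil 2\sqrt{n}-1\rceil$, partition $V(K_n)=V_1\cup\cdots\cup V_k$ with each $|V_i|\leq\ell$, and label the vertices of each part as $V_i=\{v_i^1,\ldots,v_i^{\ell_i}\}$. Orient each within-part edge $v_i^r v_i^{r+1}$ as the forward arc $v_i^r\to v_i^{r+1}$; for every other edge $v_i^r v_j^s$ with $r\neq s$, orient it from the vertex with the larger superscript to the one with the smaller superscript (so $v_j^s\to v_i^r$ whenever $s>r$); and orient the remaining edges (those with $r=s$) arbitrarily. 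Taking $B=\{v_i^1:1\leq i\leq k\}$, a straightforward induction on $t$ shows that at time $t$ the only white out-neighbor of each vertex $v_i^{t+1}$ is $v_i^{t+2}$, because every other out-neighbor of $v_i^{t+1}$ lies at a position $v_j^s$ with $s\leq t+1$ and has therefore already been colored blue. Hence $B$ is a zero forcing set whose propagation time is at most $\ell-1$, so the throttling number of this orientation is at most $k+\ell-1=\lceil 2\sqrt{n}-1\rceil$.

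With both endpoints realized, Corollary \ref{fullthrot} immediately delivers every integer in $[\lceil 2\sqrt{n}-1\rceil,n]$ as the throttling number of some orientation of $K_n$, completing the proof. The main subtlety I anticipate is the verification of the zero-forcing process in the lower-bound construction: one must check carefully that the ``larger-superscript to smaller-superscript'' rule for non-path cross-edges really guarantees that, at every time step, the only white out-neighbor of each vertex about to fire is the next vertex in its own chain. The essential point is simply that all other out-neighbors of $v_i^{t+1}$ sit at positions with superscript $\leq t+1$ in some chain, and are therefore already blue by the inductive hypothesis.
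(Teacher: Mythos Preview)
Your proof is correct and follows essentially the same approach as the paper: both use the transitive tournament to realize $\throt=n$ via the ``only one force per time step'' argument, and both build the low-throttling orientation by arranging the vertices into roughly $\sqrt{n}$ directed chains of length roughly $\sqrt{n}$, with all non-chain arcs pointing backward or sideways so the chain heads form a zero forcing set. The only cosmetic differences are that the paper certifies the lower-endpoint construction by invoking Theorem~\ref{charthm} (recognizing the orientation as obtained from some $H_{a,b+1}$ by contracting path arcs and deleting non-path arcs) rather than by your direct inductive verification, and that your final appeal to Corollary~\ref{fullthrot} is unnecessary---by Definition~\ref{def:OTI}, $\oti(G)$ is already the full integer interval $[m,M]$, so realizing the two endpoints suffices.
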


\begin{proof}
    To construct a tournament $\vec{K}_n$ for some $n$ with $\throt(\vec{K}_n)=n$, start with $n$ vertices labeled $v_0, v_1, \ldots, v_{n-1}$ and for each pair of distinct vertices $v_i$ and $v_j$ where $i > j$, add the arc $(v_i, v_j)$. 
This construction is illustrated with $n = 5$ in Figure \ref{completeMax}. Now, begin with any zero forcing set of $\vec{K_n}$ colored blue, and suppose $n \geq 4$, as the result is trivial for $n \leq 3$. Our goal is to prove that $\throt(\vec{K}_n)=n$, which is true if only one vertex can ever be forced at a single time step. For the sake of contradiction, suppose $v_a,v_b,v_x,v_y$ are distinct vertices such that $v_a \to v_x$ and $v_b \to v_y$ in time step $t$. Also, suppose without loss of generality that $b > a$. Then, by construction, $a > x$ and $b > y$, which implies $b > a > x$. Since $b > x$ and $b > y$, vertices $v_x$ and $v_y$ are both white out-neighbors of $v_b$ at time step $t$. Thus, $v_b$ cannot perform a force this time step, which is a contradiction. Hence, only one vertex can be forced at a single time step for any zero forcing set, so $\throt(\vec{K}_n)=n$.
    
    Now, to construct a tournament $\vec{K}_n$ for some $n$ such that $\throt(\vec{K}_n)=\ceil{2\sqrt{n}-1}$, let $m$ be the largest integer such that $m^2 \leq n$, and let $r = n - m^2$. Note that $r \leq 2m$; otherwise $n=m^2 + r \geq m^2 + 2m + 1 = (m+1)^2$, contradicting our choice of $m$. Now, choose the smallest graph of the form $H_{a,b+1}$ with $m$ columns that has at least $n$ vertices. Specifically, choose $H_{m+k,m}$, where $k=0$ if $r=0$, $k=1$ if $0 < r \leq m$, and $k=2$ if $m < r \leq 2m$. 
    
    Next, contract $r \bmod m$ path arcs in the top row of $H_{m+k,m}$ to obtain a directed complete graph with exactly $n$ vertices. To obtain an oriented graph, we must delete one arc in each pair of double arcs. For each pair of double arcs between two vertices in the same column, choose either arc to delete. Also, delete arcs $(b,a)$ such that $(a,b)$ is a path arc. We are now left with an orientation $\vec{K}_n$ of $K_n$ attained from $H_{m+k,m}$ by contracting path arcs and deleting non-path arcs, which implies that $\throt{(\vec{K}_n)} \leq 2m + k - 1$ by Theorem \ref{charthm}. For any possible $m$ and $k$, this simplifies to $\throt{(\vec{K}_n)} \leq \ceil{2\sqrt{n} - 1}.$ An example of this construction with $n = 5$ is illustrated in Figure \ref{completeMin}.
    
    Since $\ceil{2\sqrt{n} - 1}$ is the lower bound of throttling number of any oriented graph, the above inequality becomes an equality. We have found orientations of $K_n$ that achieve both the smallest and largest possible throttling number, so the OTI of a complete graph is maximum.
    \end{proof}
    
    \begin{figure}[H]
       \centering
       \begin{subfigure}[b]{0.4\textwidth}
            \hspace{1.2 cm}
           \includegraphics[width=4cm]{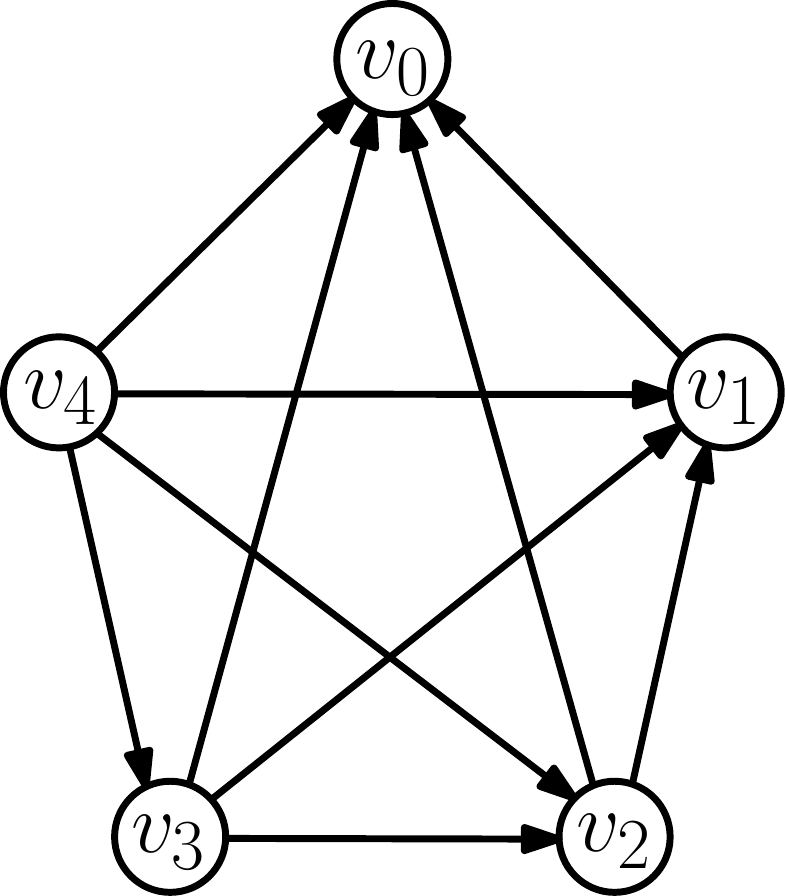}
           \caption{This oriented graph has throttling number 5.}\label{completeMax}
       \end{subfigure}
       \hspace{1 cm}
       \begin{subfigure}[b]{0.4\textwidth}
            \hspace{1.5 cm}
           \includegraphics[width=3.25cm]{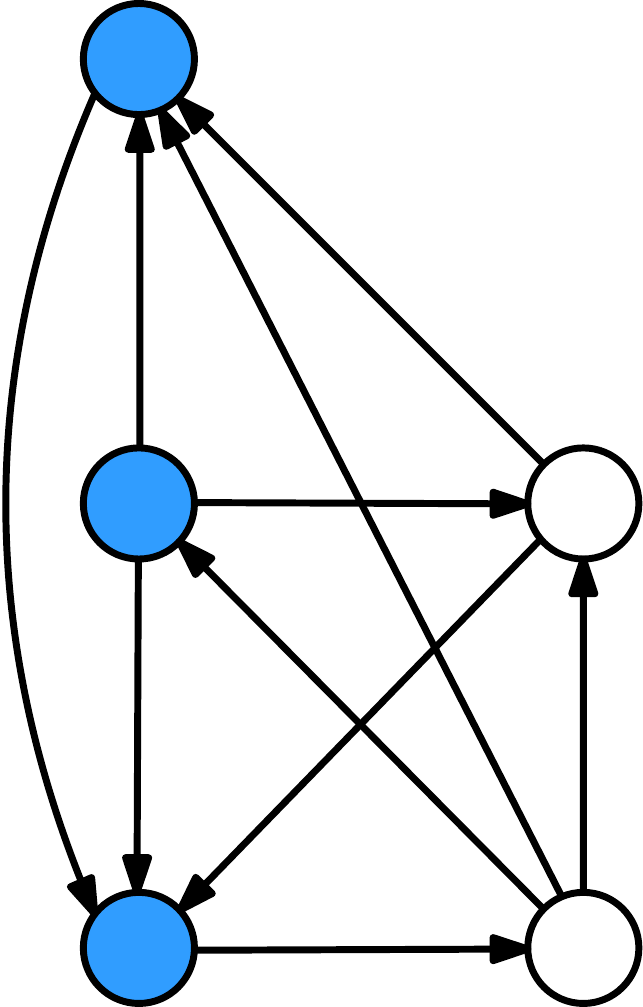}
           \caption{This oriented graph has throttling number $\ceil{2 \sqrt{5} - 1} = 4$.}\label{completeMin}
        \end{subfigure}
        \caption{Orientations of $K_5$ which achieve the maximum and minimum throttling numbers are shown.}\label{completeMaxMin}
    \end{figure}

While complete graphs have orientations that achieve both the maximum and minimum throttling number, this is not true for all types of graphs. In fact, paths have orientations that achieve the minimum throttling number, but not the maximum.

\begin{rem}\label{pathstuff}
By Proposition \ref{subset}, $m \leq \throt(P_n)$ where $\oti(P_n)=[m,M]$. Since $\throt(P_n)$ already achieves the lower bound of $\ceil{2\sqrt{n}-1}$ by \cite{BY}, it must be true that $m = \ceil{2\sqrt{n}-1}$ since $m < \ceil{2\sqrt{n}-1}$ is impossible. Thus, a path on $n$ vertices has an orientation $\vec{P}_n$ such that $\throt(\vec{P}_n) = \ceil{2\sqrt{n}-1}$. Additionally, by Theorem \ref{fullthrot}, the throttling number of any orientation of $P_n$ is bounded above by \[ \ceil{2\sqrt{n}-1}+\floor{\frac{E(P_n)}{2}} = \ceil{2\sqrt{n}-1}+\floor{\frac{n-1}{2}}. \]
\end{rem}

By Remark \ref{pathstuff}, it is impossible for an orientation of $P_n$ to achieve throttling number $n$ for $n \geq 14$. Furthermore, we can provide families of graphs that, unlike complete graphs and paths, do not have orientations that achieve the minimum possible throttling number. To do so, the following proposition is useful.

\begin{prop}\label{leaves}
    Let $\vec{G}$ be an oriented graph with at least one leaf. If $u \in V(\vec{G})$ is adjacent to $k$ leaves, a zero forcing set of $\vec{G}$ must contain at least $k - 1$ leaves adjacent to $u$.
\end{prop}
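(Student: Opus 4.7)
The plan is to classify each leaf adjacent to $u$ by its arc orientation and then argue that all but possibly one of them must belong to any zero forcing set. Let $\ell_1, \ldots, \ell_k$ be the leaves adjacent to $u$. In the oriented graph $\vec{G}$, each such $\ell_i$ has exactly one incident arc (to or from $u$), so either $\ell_i$ is a source (arc $(\ell_i,u)$) or $\ell_i$ is a sink whose sole in-neighbor is $u$ (arc $(u,\ell_i)$). Let $s$ be the number of source-leaves and $t$ the number of sink-leaves adjacent to $u$, so $s+t=k$.

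First I would handle the source-leaves. A source has in-degree zero, so it can never be forced by any vertex; hence every source-leaf must lie in every zero forcing set of $\vec{G}$. This already contributes $s$ leaves to any zero forcing set.

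Next I would handle the sink-leaves, which is the substantive step. Each sink-leaf has $u$ as its unique in-neighbor, so it can only ever be forced by $u$. However, $u$ can perform a force at a given time step only when it has a \emph{unique} white out-neighbor. Suppose for contradiction that a zero forcing set $B$ contains at most $t-2$ of the sink-leaves adjacent to $u$. Then at least two sink-leaves $\ell_i, \ell_j$ are white at time $0$, and no vertex other than $u$ can ever force either of them. Thus $\ell_i$ and $\ell_j$ are both persistent white out-neighbors of $u$ until at least one of them is forced; but as long as both are white, $u$ has at least two white out-neighbors and cannot force either. This is a contradiction, so at least $t-1$ sink-leaves must lie in any zero forcing set.

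Combining the two cases, any zero forcing set must contain at least $s+(t-1)=k-1$ leaves adjacent to $u$ (and if $t=0$ it contains all $k \ge k-1$). There is no real obstacle here — the only thing to be careful about is the corner case $t=0$, and the small logical point that a sink-leaf can never become blue by any mechanism other than being forced by $u$ or being placed in the initial set, which is immediate from the definition of a leaf in an oriented graph.
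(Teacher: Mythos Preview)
Your proof is correct and takes essentially the same approach as the paper: both argue that source-leaves must lie in any zero forcing set, and that at most one sink-leaf adjacent to $u$ can be left white since $u$ is the only possible forcer of such leaves and cannot force while it has two white out-neighbors. The paper merely folds your case split into a single contradiction argument (assuming two leaves are white and observing they must be sinks), but the substance is identical.
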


\begin{proof}
    Suppose for the sake of contradiction that a zero forcing set $B \subseteq V(\vec{G})$ contains $k-2$ or fewer leaves adjacent to $u$. This means at least two leaves $v,w$ adjacent to $u$ begin white. We can assume $v$ and $w$ are both sinks, as a zero forcing set necessarily contains all sources. Since $v$ and $w$ are leaves, they both can only be forced by $u$. However, $u$ can only force one vertex, so it cannot force both $v$ and $w$. Therefore, $B$ is not a zero forcing set, a contradiction. Thus, $B$ contains at least $k - 1$ leaves adjacent to $u$.
\end{proof}

While paths can be oriented to achieve the minimum throttling number but not the maximum, oriented stars only achieve the maximum throttling number.

\begin{cor}
    The throttling number of an oriented star on $n$ vertices is $n$.
\end{cor}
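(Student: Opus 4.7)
The plan is to pin down the lower bound $\throt(\vec{G}) \geq n$, since the matching upper bound is immediate from taking $B = V(\vec{G})$, for which $\throt(\vec{G}; B) = n$. The case $n = 1$ is trivial, so I would assume $n \geq 2$ and let $u$ denote the center of $\vec{G}$, which is adjacent to all $n - 1$ leaves.

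First, I would invoke Proposition \ref{leaves} with $k = n - 1$ to conclude that every zero forcing set $B \subseteq V(\vec{G})$ must contain at least $n - 2$ of the leaves; in particular $|B| \geq n - 2$. Then I would split into cases on the size of $B$. If $|B| = n$, then $\throt(\vec{G}; B) = n$ directly. If $|B| = n - 1$, at least one vertex of $\vec{G}$ is white at time $0$, so $\pt(\vec{G}; B) \geq 1$ and $\throt(\vec{G}; B) \geq n$.

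The substantive case is $|B| = n - 2$, where $B$ consists of exactly $n - 2$ leaves and the two white vertices are $u$ together with some leaf $w$. The key observation is that $w$ has degree $1$ in the underlying graph, so its only potential in-neighbor is $u$, and any force that turns $w$ blue must come from $u$. Since $u$ itself is white at time $0$, $u$ can be forced no earlier than time $1$ and $w$ no earlier than time $2$, which gives $\pt(\vec{G}; B) \geq 2$ and $\throt(\vec{G}; B) \geq n$. The hard part, such as it is, is this last case; everything else is immediate bookkeeping with Proposition \ref{leaves}. Taking the minimum over all zero forcing sets $B$ yields $\throt(\vec{G}) \geq n$, and combined with the upper bound this gives $\throt(\vec{G}) = n$.
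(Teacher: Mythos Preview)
Your proof is correct and follows essentially the same approach as the paper: both invoke Proposition~\ref{leaves} to force $|B|\ge n-2$, and both identify that when the center $u$ and one leaf $w$ are white, $u$ must become blue before it can force $w$, giving $\pt\ge 2$. The only organizational difference is that the paper cases on whether the uncolored leaf is a source or a sink (a source leaf must lie in $B$, pushing $|B|\ge n-1$), whereas you case on $|B|$; in your $|B|=n-2$ case you might note explicitly that $w$ cannot be a source since then it could never be forced and $B$ would fail to be a zero forcing set, but this is implicit in your restriction to zero forcing sets.
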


\begin{proof}
    By definition, a star on $n$ vertices contains $n-1$ leaves. By Proposition \ref{leaves}, we must initially color $n-2$ leaves blue. If the last remaining leaf is a source, it must be included in the zero forcing set as well, and since there is only one white vertex left, no simultaneous forces can be performed. If the last remaining leaf is a sink, we know that the only vertex that can force the leaf is the central vertex. Thus, both vertices cannot be forced in the same time step because the central vertex must be blue before forcing the leaf. Hence, there are no simultaneous forces in all cases, so the throttling number is $n$.
\end{proof}

This leads us to the question whether there are graphs that cannot be oriented to achieve the upper and the lower bounds on the throttling number. To answer this question positively, we must first establish a more general understanding of the impact of leaves on zero forcing.

\begin{cor}\label{superleaves}
    Let $\vec{G}$ be an oriented graph and $X$ be the set of vertices in $\vec{G}$ adjacent to at least one leaf. Let $x = |X|$ and $y$ be the number of leaves in $\vec{G}$. If $B$ is a zero forcing set of $\vec{G}$, then $|B| \geq y - x$.
\end{cor}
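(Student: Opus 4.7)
The plan is to derive the bound by partitioning the leaves according to their unique neighbors in $X$ and summing the local lower bounds coming from Proposition \ref{leaves}.

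First, I would observe that since a leaf is a vertex of degree $1$ in the underlying graph, every leaf has exactly one neighbor, and that neighbor lies in $X$ by definition of $X$. Consequently, the collection $\{L_u : u \in X\}$, where $L_u$ denotes the set of leaves adjacent to $u$, partitions the set of all leaves of $\vec{G}$. Writing $k_u = |L_u|$, this yields $\sum_{u \in X} k_u = y$.

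Next, I would apply Proposition \ref{leaves} to each $u \in X$ separately: any zero forcing set $B$ of $\vec{G}$ must contain at least $k_u - 1$ leaves adjacent to $u$, i.e., $|B \cap L_u| \geq k_u - 1$. Since the sets $L_u$ are pairwise disjoint, summing these inequalities gives
\[
|B| \;\geq\; \sum_{u \in X} |B \cap L_u| \;\geq\; \sum_{u \in X} (k_u - 1) \;=\; \left(\sum_{u \in X} k_u\right) - x \;=\; y - x,
\]
which is the desired conclusion.

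There is no serious obstacle here; the only subtle point to verify carefully is that the sets $L_u$ are genuinely disjoint, which follows immediately from the fact that a leaf has a single neighbor in the underlying graph (regardless of the orientation of the incident arc). Because the bounds from Proposition \ref{leaves} count disjoint subsets of $B$, they can be added without any double-counting, so the proof amounts to a one-line summation once the partition is set up.
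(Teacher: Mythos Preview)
Your proposal is correct and follows essentially the same approach as the paper: both arguments apply Proposition~\ref{leaves} to each $u\in X$, observe that a leaf has a unique neighbor so the sets of leaves adjacent to distinct $u$'s are disjoint, and then sum $\sum_{u\in X}(k_u-1)=y-x$. Your version is slightly more explicit in framing this as a partition $\{L_u\}_{u\in X}$ and in writing the intermediate inequality $|B|\geq \sum_{u\in X}|B\cap L_u|$, but the underlying idea is identical.
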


\begin{proof}
    For some vertex $v \in X$, let $k_{v}$ be the number of leaves to which $v$ is adjacent. From Proposition \ref{leaves}, we know that a zero forcing set of $\vec{G}$ must contain at least $k_{v}-1$ of the leaves adjacent to $v$. Summing over all vertices in $X$, we have
    \[\sum\limits_{v \in X} (k_{v} - 1) = \sum\limits_{v \in X} k_{v} - \sum\limits_{v \in X} 1 = y - x\]
    leaves that must be in a zero forcing set of $\vec{G}$. Note that no leaves are double-counted since a leaf is only adjacent to a single vertex in $X$, by definition.
\end{proof}


While Corollary \ref{superleaves} is not very helpful for graphs with relatively few leaves, such as paths, a double star on $n$ vertices is an example of a graph for which it is useful. In double stars, $y = n-2$ and $x = 2$, giving $n - 4$ as a lower bound on the size of a minimum zero forcing set. 

Corollary \ref{superleaves} is additionally useful for a family of graphs that we call \emph{augmented double stars}. To obtain this graph, let $G$ be any double star on $n-1$ vertices. Label the internal vertices $a$ and $b$. Delete the edge $ab$ and add a vertex $w$ and the edges $aw$ and $bw$. An example of an augmented double star is shown in Figure \ref{augstardef}.

\begin{figure}[H]
    \centering
    \includegraphics[scale = 0.4]{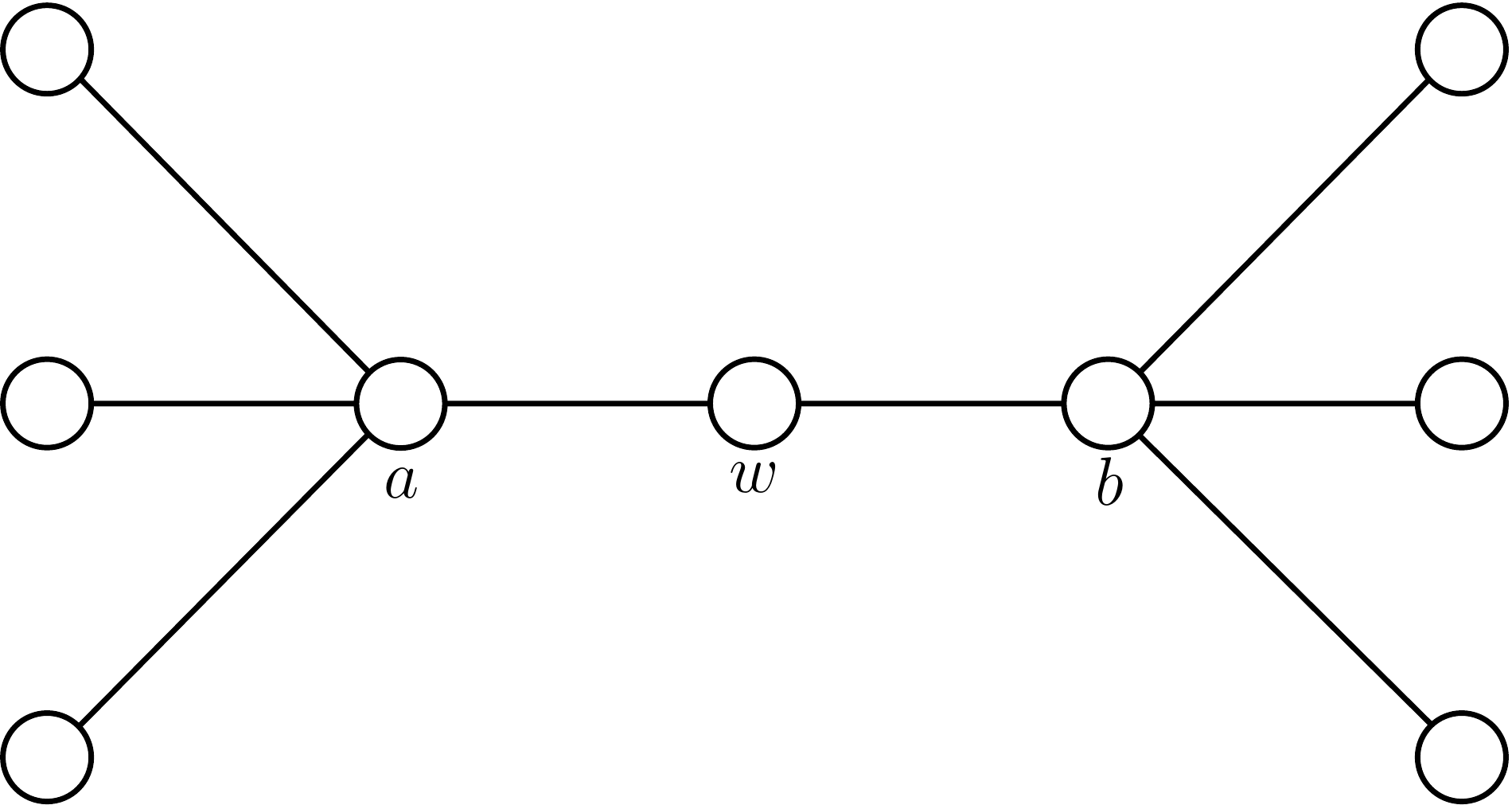}
    \caption{An augmented double star on 9 vertices is shown.} \label{augstardef}
\end{figure}

Interestingly, unlike complete graphs, sufficiently large augmented double stars have no orientations that achieve the upper or the lower bound for throttling number.

\begin{thm}
    Let $G$ be an augmented double star on $n$ vertices with $n \geq 12$, and let $\vec{G}$ be any orientation of $G$. Then $\ceil{2\sqrt{n}-1} < \throt(\vec{G}) < n$.
\end{thm}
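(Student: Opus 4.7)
The plan is to prove the two inequalities independently. Let $a$ and $b$ denote the internal vertices of the original double star, let $w$ be the added vertex joined to both, and let $p, q \geq 1$ denote the numbers of leaves at $a$ and $b$ (so $p + q = n - 3$). Partition the leaves of $a$ into out-leaves $L_a^+$ (with $a \to \ell$) and in-leaves $L_a^-$, and similarly for $b$; write $\sigma_a = 1$ if $a \to w$ and $\sigma_a = 0$ otherwise, with $\sigma_b$ defined analogously.

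For the lower bound, apply Corollary \ref{superleaves} with $X = \{a, b\}$ (so $x = 2$) and $y = n - 3$: every zero forcing set has size at least $n - 5$, so $\throt(\vec{G}) \geq n - 5$. It then suffices to verify $n - 5 > \lceil 2\sqrt{n} - 1 \rceil$ for $n \geq 12$, which, since $n - 5$ is an integer, is equivalent to $n - 5 \geq 2\sqrt{n}$; squaring gives $n^2 - 14n + 25 \geq 0$, whose larger root is $7 + \sqrt{24} < 12$, so the inequality holds for all $n \geq 12$.

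For the upper bound, I construct, for every orientation $\vec{G}$, a zero forcing set $B$ with $|B| + \pt(\vec{G}; B) \leq n - 1$. In most subcases I take $|B| = n - 2$ and produce two simultaneous forces at time $1$ by removing a carefully chosen pair $\{u_1, u_2\}$ from $V(\vec{G})$. The argument splits on whether $L_a^+$ and $L_b^+$ are empty. If both are nonempty, take the white pair to be one out-leaf of $a$ and one out-leaf of $b$; then $a$ and $b$ each have a unique white out-neighbor and force simultaneously. If exactly one is empty, say $L_a^+ = \emptyset$, then $a$'s only potential out-neighbor is $w$; depending on $\sigma_a, \sigma_b$ and on whether $L_b^-$ is empty, the white pair is taken to be $\{w, \ell\}$ with $\ell \in L_b^+$ (when $\sigma_a = 1, \sigma_b = 0$), or $\{w, b\}$ with an in-leaf of $b$ forcing $b$ (when $\sigma_a = 1, \sigma_b = 1, L_b^- \neq \emptyset$), or $\{a, \ell\}$ with $\ell \in L_b^+$ and an in-leaf of $a$ (or $w$) forcing $a$ (the remaining subcases). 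Finally, if $L_a^+ = L_b^+ = \emptyset$ then every leaf is a source: when $\sigma_a = \sigma_b = 0$ the vertex $w$ is also a source, and taking $B$ to be all sources leaves $\{a, b\}$ white and forced simultaneously at time $1$ by in-leaves, giving throttling $(n-2) + 1 = n-1$; otherwise $B$ consists of the $n-3$ leaves, in-leaves force $a$ and $b$ simultaneously at time $1$, and whichever of $a, b$ points to $w$ forces $w$ at time $2$, giving throttling $(n - 3) + 2 = n - 1$.

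The main obstacle is the upper-bound case analysis, particularly the subcase $L_a^+ = \emptyset,\ \sigma_a = \sigma_b = 1,\ L_b^- = \emptyset$. Here $b$ has in-neighbors $L_b^- \cup \{w\text{ if }\sigma_b = 0\} = \emptyset$ and is therefore a source that must remain blue, so the natural pair $\{w, b\}$ is unavailable; the pair $\{a, \ell\}$ with $\ell \in L_b^+$ nevertheless works because an in-leaf of $a$ (existing as $p \geq 1$) forces $a$, while $b$ simultaneously forces $\ell$, since $b$'s other out-neighbor $w$ is kept blue and its remaining out-leaves are all in $B$. Verifying these simultaneity conditions across every subcase is the core of the argument.
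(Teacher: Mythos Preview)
Your lower bound is essentially identical to the paper's: both invoke Corollary~\ref{superleaves} with $x=2$ and $y=n-3$ to get $|B|\ge n-5$, and then check $n-5>\lceil 2\sqrt{n}-1\rceil$ for $n\ge 12$. (A small quibble: ``equivalent'' overstates it, but the direction you actually use---$n-5\ge 2\sqrt{n}\Rightarrow n-5>\lceil 2\sqrt{n}-1\rceil$, since $\lceil x\rceil<x+1$---is valid.)

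Your upper bound is correct, but the paper's argument is far more economical. Instead of casing on the global orientation (the sets $L_a^\pm$, $L_b^\pm$, and the bits $\sigma_a,\sigma_b$), the paper simply picks \emph{one} leaf $a'$ at $a$ and \emph{one} leaf $b'$ at $b$, colours $V(\vec G)\setminus\{a,a',b,b'\}$ blue, and then adds back whichever of $\{a,a'\}$ is the tail of the arc between them (and likewise for $\{b,b'\}$). In every orientation this leaves exactly two white vertices, each of which is the unique white out-neighbour of its blue partner, so both are forced at time $1$ and $\throt(\vec G)\le (n-2)+1=n-1$. The point is that the decision can be made \emph{locally} at each chosen leaf, so no subcase analysis on $L_a^+$, $L_b^-$, or the $aw$/$bw$ directions is needed; in particular the ``main obstacle'' subcase you isolate simply does not arise. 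Your route does buy one minor thing: in the subcase $L_a^+=L_b^+=\emptyset$ with $\sigma_a\vee\sigma_b=1$ you obtain the same bound via $|B|=n-3$ and $\pt=2$, exhibiting a second optimal-looking set. But as a proof of the theorem, the paper's uniform construction is decidedly shorter.
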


\begin{proof}
    From Corollary \ref{superleaves}, since $\vec{G}$ has $n-3$ leaves and two vertices adjacent to these leaves, a lower bound on the number of vertices in a zero forcing set is $n-3 - 2 = n-5$. Since $n \geq 12$, we have $n - 5 > \ceil{2\sqrt{n} - 1}$, so $\ceil{2\sqrt{n}-1} < \throt(\vec{G})$.
        
    To upper bound $\throt(\vec{G})$, first recall that there are two vertices adjacent to leaves and one vertex adjacent to both of those vertices. Label them $a$, $b$, and $w$, respectively. Let $a'$ be a leaf adjacent to $a$ and $b'$ be a leaf adjacent to $b$. Color all vertices in $\vec{G}$ except $a,a',b$, and $b'$. If $a'$ is a source, color it blue; otherwise, color $a$ blue. Similarly, if $b'$ is a source, color it blue; otherwise, color $b$ blue. The remaining two white vertices can be forced in a single time step, meaning $\throt(\vec{G}) \leq (n-2)+1 < n$.
\end{proof}

In this section, we introduced the concept of an orientation throttling interval, proved its fullness for all simple graphs, and provided examples of graph families with different orientation throttling intervals. While paths are one of the most basic types of graph families, determining the maximum value of the OTI of a path actually proves to be rather difficult. In Section \ref{altpaths}, we investigate this further.

\section{Throttling on alternating paths} \label{altpaths}

In Remark \ref{pathstuff}, we found an upper bound on the throttling number of an oriented path. Note that a tight upper bound must be at least the throttling number of any particular orientation. This section is dedicated to finding an exact formula for the throttling number of a specific orientation of paths. 


\begin{defn}
    An \emph{alternating path} on $n$ vertices is an orientation of $P_n$ where every vertex is either a source or a sink. Note that, when $n$ is even, the reversal of this graph is equivalent to the original graph, which is not the case when $n$ is odd.
\end{defn}

We now compute an exact formula for the throttling number of an alternating path $\vec{P}_n$ in terms of $n$. To start this process, we utilize another known color change rule.

\begin{rem}\label{PSD}
    \emph{Positive semidefinite (PSD) throttling} is a variation of standard throttling that is studied in \cite{semidefinite}. While we are not concerned with the technical definition of the PSD color change rule, denoted $\Z_+$, we make use of how it works on an undirected path $P_n$. On $P_n$, the PSD color change rule can be simplified as follows: in each time step, all blue vertices force all of their adjacent white vertices simultaneously (note that each vertex is no longer limited to only performing one force). The PSD propagation time of a set of vertices $B$ in a graph $G$ is denoted $\ptp(G;B)$ and the PSD throttling number of $G$ is denoted $\thp(G)$.
\end{rem}

Remark \ref{PSD} allows us to quickly find the throttling number of an odd alternating path.

\begin{prop}\label{oddpathbounds}
    For an alternating path $\vec{P}_n$ where $n$ is odd, $\throt (\vec{P}_n) =\frac{n-1}{2} + \ceil{\sqrt{n+1} - \frac{1}{2}}.$
\end{prop}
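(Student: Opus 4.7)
The plan is to reduce the standard forcing on $\vec{P}_n$ to PSD forcing on an undirected path (using Remark \ref{PSD}), then compute the minimum in closed form.

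Label the vertices $v_1, \dots, v_n$ along the path so that the odd-indexed vertices are sources and the even-indexed ones are sinks; write $m := (n-1)/2$ for the number of sinks. Since sources have in-degree zero, every zero forcing set $B$ must contain all $(n+1)/2$ sources, so $B = \{v_1, v_3, \dots, v_n\} \cup S$ with $S$ a subset of the sinks. Consider the undirected path $P_{m+2}$ on vertices $s_0, s_1, \dots, s_{m+1}$ with $s_j$ identified with sink $v_{2j}$ and the endpoints $s_0, s_{m+1}$ treated as permanently blue. I would first verify
\[ \pt(\vec{P}_n; B) \;=\; \ptp\bigl(P_{m+2}; \{s_0, s_{m+1}\} \cup S\bigr). \]
The key observation is that a sink $v_{2j}$ is forced by a source neighbor $v_{2j\pm 1}$ exactly when that source's other out-neighbor is already blue; the endpoints $v_1$ and $v_n$ have single out-neighbors, which plays the role of $s_0$ and $s_{m+1}$ being blue from time $0$ under the PSD rule.

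Next, for fixed $|S| = k$, the PSD propagation time is $\max_i \lceil g_i/2 \rceil$, where the $g_i$ are the $k+1$ white-vertex gaps between consecutive blue vertices (summing to $m-k$). Distributing the gaps as evenly as possible gives $\min \max g_i = \lceil (m-k)/(k+1) \rceil$, and the identity $\lceil \lceil a/b \rceil / 2 \rceil = \lceil a/(2b) \rceil$ simplifies this to $\lceil (m-k)/(2k+2) \rceil$. Hence
\[ \throt(\vec{P}_n) \;=\; \tfrac{n+1}{2} + \min_{0 \le k \le m} f(k), \qquad f(k) := k + \Bigl\lceil \tfrac{m-k}{2k+2} \Bigr\rceil. \]

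For the minimization, substituting $y = k+1$ and applying the AM--GM inequality yields
\[ f(k) \;\ge\; y + \tfrac{m+1}{2y} - \tfrac{3}{2} \;\ge\; \sqrt{2(m+1)} - \tfrac{3}{2} \;=\; \sqrt{n+1} - \tfrac{3}{2}, \]
so $\min_k f(k) \ge t := \lceil \sqrt{n+1} - 3/2 \rceil$ by integrality. For the matching upper bound, $f(k) \le t$ is equivalent to the quadratic inequality $2k^2 - (2t-1)k + (m-2t) \le 0$; its discriminant $D = (2t+3)^2 - 8(m+1)$ is a positive integer with $D \equiv 1 \pmod 8$ (using that $\sqrt{n+1} - 1/2$ is never an integer, so $t > \sqrt{n+1} - 3/2$ strictly). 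I would then show the interval $[k_-, k_+]$ of roots always contains a non-negative integer at most $m$: (i) when $D \in \{1, 9\}$, a direct parity check shows that exactly one endpoint of the interval is an integer; (ii) when $D \ge 17$, the interval length $\sqrt{D}/2 > 1$ forces an integer to lie inside. Finally, the identity $\lceil \sqrt{n+1} - 3/2 \rceil = \lceil \sqrt{n+1} - 1/2 \rceil - 1$ gives
\[ \throt(\vec{P}_n) \;=\; \tfrac{n+1}{2} + \lceil \sqrt{n+1} - \tfrac{1}{2} \rceil - 1 \;=\; \tfrac{n-1}{2} + \lceil \sqrt{n+1} - \tfrac{1}{2} \rceil. \]

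The main obstacle is the upper-bound half of the minimization: showing that a suitable integer $k$ always exists in the interval of real roots requires the modular analysis of the discriminant together with a short parity check, which is technical but routine. The PSD reduction is intuitively clear but must be justified carefully step by step so that the blue sets agree at every time.
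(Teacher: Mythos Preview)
Your argument is correct in outline, but you have made the problem harder than necessary by choosing the orientation with source endpoints. The paper instead invokes Theorem~\ref{reversethrot} to pass to the transpose, so that both endpoints are \emph{sinks}; then there are only $(n-1)/2$ sources to color, and the induced process on the $(n+1)/2$ sinks is exactly unconstrained PSD forcing on $P_{(n+1)/2}$, with no virtual blue endpoints. At that point the known formula $\thp(P_\ell)=\lceil\sqrt{2\ell}-\tfrac12\rceil$ from \cite{semidefinite} finishes the proof in one line. Your choice forces the two endpoint sources into the auxiliary picture as mandatory blue vertices $s_0,s_{m+1}$, which is why you end up doing the AM--GM lower bound and the discriminant/parity analysis by hand. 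That analysis is fine (note $\sqrt{D}/2\ge 3/2$ already guarantees an integer once $D\ge 9$, so only $D=1$ truly needs the parity check; and $k_->-1$, $k_+\le t\le m$ ensure the integer you find lies in $[0,m]$), but it is all avoided by working in the transpose.
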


\begin{proof}
    Without loss of generality, we consider the alternating path on $n$ vertices $\vec{P}_n$ where both endpoints are sinks. We can do this because $\throt(\vec{P}_n) = \throt(\vec{P}_n^T)$ by Theorem \ref{reversethrot}. The orientation $\vec{P}_n$ has a total of $\frac{n-1}{2}$ sources which must be colored blue initially. Note that the set of sources alone is not a zero forcing set since no forces can occur because each source is adjacent to two sinks. Thus, we must initially color some of the sinks blue as well. For this orientation, coloring a sink blue causes the adjacent sources to simultaneously force the sinks immediately to the left and right respectively (so long as they exist and have not yet been colored blue themselves). Ignoring the sources, this is an identical process to PSD forcing on an undirected path with $\frac{n+1}{2}$ vertices as described in Remark \ref{PSD}, where the vertices of this path represent the $\frac{n+1}{2}$ sinks in $\vec{P}_n$. Thus, the throttling number of $\vec{P}_n$ is the sum of the $\frac{n-1}{2}$ sources and the PSD throttling number of a path with $\frac{n+1}{2}$ vertices. By Theorem 3.2 from \cite{semidefinite}, this gives us that $\throt (\vec{P}_n) = \frac{n-1}{2} + \ceil{\sqrt{n + 1} - \frac{1}{2}}$.
\end{proof}

While throttling an alternating path on an odd number of vertices is relatively straightforward, the structure of an alternating path on an even number of vertices makes the problem more difficult. We construct an auxiliary path to help us overcome this issue.

\begin{rem}\label{aux}
     Note that an alternating path $\vec{P}_n$ on $n$ vertices where $n$ is even has $\frac{n}{2}$ sources, all of which must be blue initially in order to force the entire path. As in the proof of Proposition \ref{oddpathbounds}, throttling the remaining $\frac{n}{2}$ vertices is akin to PSD throttling on an undirected path with $\frac{n}{2}$ vertices. However, in this case, the endpoint that is a source will begin forcing the remaining $\frac{n}{2}$ white sinks immediately. To account for this, we construct an undirected path $P'$ on $1+\frac{n}{2}$ vertices and initially color one of the endpoints blue, which we call $u_0$. The vertex $u_0$ corresponds to the endpoint in $\vec{P_n}$ that is a source, while all of the other vertices in $P'$ represent the sinks in $\vec{P}_n$.  With this construction, any standard zero forcing process on $\vec{P}_n$ is equivalent to some PSD forcing process on $P'$ where $u_0$ is blue initially. An example of this equivalence is depicted in Figure \ref{auxfig} with corresponding zero forcing sets shown in blue. Thus, we can determine $\throt(\vec{P}_n)$ by first minimizing $|B'| + \ptp(P';B')$ where $B'$ is a PSD zero forcing set of $P'$ that contains $u_0$, then adding $\frac{n}{2}$ to account for the remaining sources and subtracting one to avoid counting $u_0$ twice.
\end{rem}

\begin{figure}[H]
    \centering
    \includegraphics{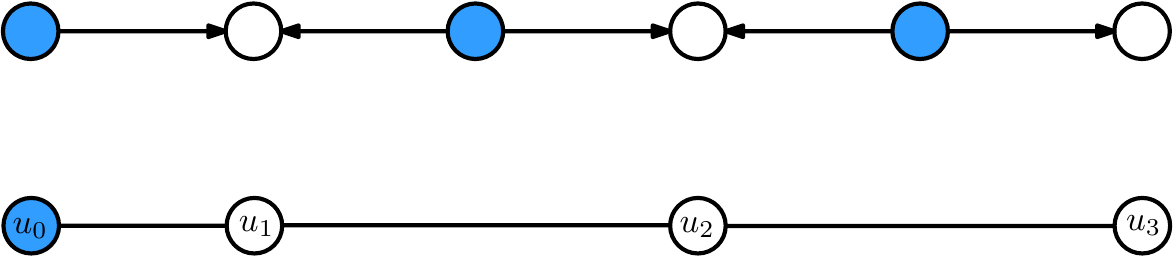}
    \caption{The top graph is the alternating path $\vec{P}_6$. The bottom graph is the corresponding path $P'$, as constructed in Remark \ref{aux}.}\label{auxfig}
\end{figure}

Next, we compute lower and upper bounds for the throttling number of an even alternating path, which we later show to be equivalent.

\begin{prop}\label{evenpathbounds2}
    Suppose $n$ is an even positive integer and $p=\frac{\sqrt{n+1}-1}{2}$. Then, an alternating path $\vec{P}_n$ satisfies $\throt(\vec{P}_n) \geq \frac{n}{2}+\ceil{2p}.$ 
\end{prop}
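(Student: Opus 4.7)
The plan is to translate the problem via Remark \ref{aux} into a PSD-forcing problem on the auxiliary undirected path $P'$ on $m = n/2 + 1$ vertices, with the side constraint that $u_0 \in B'$. By that remark,
\[
\throt(\vec{P}_n) = \frac{n}{2} - 1 + \min\bigl\{|B'| + \ptp(P'; B') : B' \text{ is a PSD zero forcing set of } P' \text{ with } u_0 \in B'\bigr\}.
\]
Using the identity $\lceil \sqrt{n+1}\rceil = 1 + \lceil \sqrt{n+1} - 1 \rceil = 1 + \lceil 2p \rceil$, it therefore suffices to show $|B'| + \ptp(P'; B') \geq \lceil \sqrt{n+1} \rceil$ for every such $B'$.

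Fix a PSD zero forcing set $B' \ni u_0$, set $k = |B'|$ and $t = \ptp(P'; B')$, and order the elements of $B'$ along $P'$. I would invoke the standard PSD gap analysis on a path: the left-end gap past $u_0$ is empty; each of the $k-1$ interior gaps between consecutive blue vertices contains at most $2t$ white vertices (PSD forces flood in from both sides at unit speed); and the right-end gap past the rightmost blue vertex, which is present only when $u_{m-1} \notin B'$, contains at most $t$ white vertices (forced from one side only). Summing these contributions yields $m - k \leq 2t(k-1)$ when $u_{m-1} \in B'$, and $m - k \leq 2t(k-1) + t$ otherwise.

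I would then split into the two cases. If $u_{m-1} \in B'$, the bound rearranges to $(2t+1)(2k-2) \geq n$; AM--GM on the two factors gives $k + t \geq \sqrt{n} + \tfrac{1}{2}$, and the pointwise inequality $\sqrt{n} + \tfrac{1}{2} \geq \sqrt{n+1}$ (which squares to $\sqrt{n} \geq \tfrac{3}{4}$, holding for $n \geq 1$) upgrades this to $k + t \geq \sqrt{n+1}$. If $u_{m-1} \notin B'$, the raw bound $m - k \leq 2t(k-1) + t$, after multiplying through by $2$ and adding $2t+1$ to both sides, rearranges into the symmetric product $(2k-1)(2t+1) \geq n+1$, and AM--GM now delivers $k + t \geq \sqrt{n+1}$ directly.

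In either case, integrality of $k + t$ gives $k + t \geq \lceil \sqrt{n+1} \rceil$, and plugging into the reduction from Remark \ref{aux} yields $\throt(\vec{P}_n) \geq \frac{n}{2} - 1 + \lceil \sqrt{n+1} \rceil = \frac{n}{2} + \lceil 2p \rceil$ as desired. The step I expect to be the main obstacle is the algebra in the second case: the inequality $m - k \leq t(2k-1)$ does not factor as a symmetric product on its own, so one has to rearrange it into $(2k-1)(2t+1) \geq n+1$ so that AM--GM produces exactly $\sqrt{n+1}$ rather than a weaker bound.
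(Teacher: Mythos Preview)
Your argument is correct. The reduction via Remark~\ref{aux} and the counting inequality you obtain are the same as the paper's: writing $k=|B'|$ and $t=\ptp(P';B')$, the paper's bound $x+1+t'(2x+1)\ge \frac{n}{2}+1$ is exactly your Case~2 inequality $m-k\le t(2k-1)$ with $x=k-1$. The genuine difference is in the optimisation step. The paper treats $t'$ as a real variable, differentiates $f(t')=\frac{n/2-t'}{2t'+1}+t'$, and checks that the critical point $t'=p$ is a minimum with value $2p$. You instead factor the constraint as $(2k-1)(2t+1)\ge n+1$ and apply AM--GM, which is more elementary and avoids calculus entirely. Your Case~1 split is not actually needed---the Case~2 bound $m-k\le 2t(k-1)+t$ holds regardless of whether $u_{m-1}\in B'$, since it is weaker than the Case~1 bound---so you could streamline by dropping Case~1 and the auxiliary inequality $\sqrt{n}+\tfrac12\ge\sqrt{n+1}$. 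Either way, both routes land on $k+t\ge\sqrt{n+1}$ and then round up.
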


\begin{proof}
    By Remark \ref{aux}, it is sufficient to obtain a lower bound for $|B'| + \ptp(P'; B')$ where $P'$ is the auxiliary path for $\vec{P}_n$ and $B'$ is a PSD zero forcing set that contains $u_0$. Let $x = |B' \setminus \{u_0\}| \geq 0$ be the number of vertices that are initially colored blue in $P'$ other than $u_0$. Note that $x$ is also the number of sinks in $\vec{P}_n$ initially colored blue and the total number of initially blue vertices in $\vec{P}_n$ is $\frac{n}{2}+x$. Since $u_0$ creates one forcing chain in $P'$ and each of the $x$ other blue vertices can create at most two forcing chains, the largest number of vertices that can be forced using PSD zero forcing on $P'$ during a single time step is $2x+1$.
    
    Let $t' = \ptp(P';B')$. As $B'$ forces the entire graph in $t'$ time steps, it follows that $x+1+t'(2x+1)\geq |V(P')| = \frac{n}{2}+1$. Solving for $x$ yields \[x \geq \frac{\frac{n}{2}-t'}{2t'+1}. \] 
    To find the throttling number on the throttling number of $P'$, we minimize $x + t'$. By the previous inequality, \[ x + t' ~\geq~ \frac{\frac{n}{2}-t'}{2t'+1}+t' ~:=~ f(t').\] This means that $x+t' \geq \underset{a \in \mathbb{R}}{\min}{\{f(a)\}}$. If we differentiate $f(t')$ with respect to $t'$, we find that $f$ has a critical point at $t' = \frac{\sqrt{n+1}-1}{2} = p$. Taking the second derivative of $f$ and substituting $p$ for $t'$ yields a positive value, meaning $t' = p$ is a minimum. Therefore, \[
        |B'| - 1 + t' ~=~ x+t' ~\geq~ \frac{\frac{n}{2}-p}{2p+1}+p ~=~ 2p
    \] which means that $2p +1$ is a lower bound for $|B'| + \ptp(P'; B')$. Since $|B'| + \ptp(P'; B')$ is necessarily an integer, we have that \begin{eqnarray}\label{araay}
        |B'| + \ptp(P'; B') \geq \ceil{2p + 1} = \ceil{2p} + 1.
    \end{eqnarray} Now suppose $B \subseteq V(\vec{P}_n)$ satisfies $\throt(\vec{P}_n) = |B| + \pt(\vec{P}_n; B)$ and let $B'$ be the corresponding PSD zero forcing set in $P'$. Therefore, by Remark \ref{aux} and the inequality in (\ref{araay}), \[ \throt{(\vec{P}_n)} = |B| + \pt(\vec{P}_n;B) ~=~ |B'| + \frac{n}{2} - 1 + \ptp(P'; B') \] \[\geq \ceil{2p} + 1 + \frac{n}{2} - 1 ~=~ \frac{n}{2} + \ceil{2p}. \hspace{1.3 cm} \qedhere \]
\end{proof}

\begin{prop}\label{evenpathbounds1}
     Suppose $n$ is an even positive integer and $p=\frac{\sqrt{n+1}-1}{2}$. Then, the alternating path $\vec{P}_n$ satisfies \[ \throt(\vec{P}_n) \leq \frac{n}{2} + \ceil{\frac{\frac{n}{2} - \ceil{p}}{2\ceil{p} + 1}} + \ceil{p}.\]
\end{prop}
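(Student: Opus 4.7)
The plan is to apply Remark \ref{aux}, which reduces the computation of $\throt(\vec{P}_n)$ to choosing a PSD zero forcing set $B'$ of the auxiliary path $P'$ (on $\frac{n}{2}+1$ vertices) that contains the already-blue endpoint $u_0$; then
\[ \throt(\vec{P}_n) \leq |B'| + \ptp(P';B') + \tfrac{n}{2} - 1. \]
So it suffices to exhibit such a $B'$ with $|B'| + \ptp(P';B') \leq 1 + \ceil{p} + \ceil{\tfrac{n/2 - \ceil{p}}{2\ceil{p}+1}}$.

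First, I would set $t' := \ceil{p}$ as the target propagation time and $x := \ceil{\tfrac{n/2 - \ceil{p}}{2\ceil{p}+1}}$ as the number of additional blue vertices to place beyond $u_0$. Label the vertices of $P'$ as $u_0, u_1, \ldots, u_{n/2}$ and define
\[ B' = \{u_0\} \cup \{u_{k(2t'+1)} : 1 \leq k \leq x\}, \]
with the understanding that if $x(2t'+1) > n/2$, the last vertex is replaced by $u_{n/2}$ (keeping the spacing between consecutive elements of $B'$ at most $2t'+1$).

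Next, I would verify that $\ptp(P';B') \leq t'$ using the standard fact that on a path, PSD forcing from a single blue vertex reaches every vertex within distance $t'$ after $t'$ steps. The endpoint $u_0$ covers $u_0,\ldots,u_{t'}$; each interior blue vertex $u_{k(2t'+1)}$ covers $u_{k(2t'+1)-t'}$ through $u_{k(2t'+1)+t'}$; and consecutive coverage ranges are exactly contiguous since
\[ (k(2t'+1) + t') + 1 = (k+1)(2t'+1) - t'. \]
The choice $x \geq \tfrac{n/2 - t'}{2t'+1}$ rearranges to $x(2t'+1) + t' \geq n/2$, so the last blue vertex reaches $u_{n/2}$. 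Hence $B'$ PSD-forces all of $P'$ in at most $\ceil{p}$ steps, and $|B'| + \ptp(P';B') \leq (1+x) + \ceil{p}$. Plugging into the Remark \ref{aux} inequality and simplifying yields the stated bound.

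The only subtle point, and the part that would need the most care, is the boundary behavior of the construction: if $x(2t'+1) > n/2$, the nominal position of the last blue vertex lies past the end of $P'$, so it must be relocated to $u_{n/2}$. One should check that this relocation still leaves every gap between consecutive blue vertices of size at most $2t'+1$, so the coverage argument above still applies. This is immediate from the fact that $x$ is defined as the ceiling of $\tfrac{n/2 - \ceil{p}}{2\ceil{p}+1}$, which forces $(x-1)(2t'+1) + t' < n/2$ and hence the second-to-last blue vertex is at distance at most $2t'+1$ from $u_{n/2}$. Beyond that, the argument is a direct calculation.
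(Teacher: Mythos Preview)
Your approach is essentially the same as the paper's: both invoke Remark~\ref{aux}, place blue vertices in $P'$ at positions $u_0, u_{2\ceil{p}+1}, u_{2(2\ceil{p}+1)}, \ldots$, handle the tail by possibly adding the far endpoint, and count that this gives exactly $1 + \ceil{\frac{n/2 - \ceil{p}}{2\ceil{p}+1}}$ blue vertices with PSD propagation time at most $\ceil{p}$. The only cosmetic difference is that the paper parametrizes by $m = \lfloor \frac{n/2}{2\ceil{p}+1} \rfloor$ and splits on the tail length $k$, whereas you work directly with $x$ as the ceiling.

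One small slip to fix in your final paragraph: the inequality $(x-1)(2t'+1) + t' < n/2$ that you derive from the ceiling is correct, but it does \emph{not} imply that the second-to-last blue vertex is within $2t'+1$ of $u_{n/2}$ (it only says the distance exceeds $t'$). The bound you actually need, $n/2 - (x-1)(2t'+1) \leq 2t'+1$, follows immediately from the case hypothesis $x(2t'+1) > n/2$ itself --- equivalently, relocating the last blue vertex inward from $x(2t'+1)$ to $n/2$ can only shrink the gap, which was exactly $2t'+1$ before the move.
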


\begin{proof}
    As described in Remark \ref{aux}, construct the auxiliary path $P'$. For each $1 \leq i \leq \frac{n}{2}$, let $u_i$ be the $i^{\text{th}}$ vertex after $u_0$ in $P'$ (see Figure \ref{auxfig}). We now construct a PSD zero forcing set $B'$ of $P'$ with $\ptp(P'; B') \leq \ceil{p}$ as follows. Starting with $u_0$, color every $\left( 2\ceil{p} + 1 \right)^{\text{th}}$ vertex of $P'$ blue, i.e., $u_0$, $u_{2\ceil{p} + 1}$, \ldots, $u_j$ where $j=m(2\ceil{p} + 1)$ and \[m = \floor{\frac{\frac{n}{2}}{2\ceil{p} + 1}} \geq 0. \] This leaves a tail of $k = \frac{n}{2} - j$ white vertices after $u_j$ where $0 \leq k \leq 2\ceil{p}$. If $\ceil{p} < k \leq 2\ceil{p}$, color the endpoint $u_{\frac{n}{2}}$ of $P'$ blue. Let $B'$ be the resulting set of blue vertices in $P'$. By construction of $B'$ (see Example \ref{ex:evenaltconstruction}), all forcing chains are of length at most $\ceil{p}$ where each blue endpoint begins forcing in one direction while the other blue vertices begin forcing in two directions. Thus, $\ptp(P'; B') \leq \ceil{p}$.
    
    Next, we consider the size of $B'$. Excluding $u_0$ and the $\ceil{p}$ vertices in the forcing chain started by $u_0$, there are $\frac{n}{2} - \ceil{p}$ remaining vertices in $P'$. By how $B'$ is constructed, the first $m-1$ vertices that were colored blue after $u_0$ are each the unique blue vertex in a set of $2\ceil{p}+1$ consecutive vertices. If $0 \leq k \leq \ceil{p}$ or $\ceil{p} < k \leq 2\ceil{p}$, then 
    \[m-1 < \frac{ \frac{n}{2} - \ceil{p}}{2 \ceil{p} + 1} \leq m ~~~~\text{or}~~~~ m < \frac{\frac{n}{2} - \ceil{p}}{2\ceil{p} + 1} < m + 1, \]
    respectively. Also, if $0 \leq k \leq \ceil{p}$, then we have $m+1$ initially blue vertices in $P'$; otherwise, we have $m+2$ initially blue vertices. In both cases, the number of blue vertices is 
    \[ \ceil{\frac{\frac{n}{2}-\ceil{p}}{2\ceil{p} +1}} + 1, \] which corresponds to the number of sinks we color in $\vec{P}_n$ and its endpoint that is a source. Let $B$ be the set of initially blue vertices in $\vec{P}_n$ that corresponds to $B'$ in $P'$. By Remark \ref{aux}, \[ \throt(\vec{P}_n) ~\leq~ |B| + \pt(\vec{P}_n; B) ~=~ |B'| + \frac{n}{2} - 1 + \pt(\vec{P}_n; B') \hspace{2 cm} \] \[ \hspace{2.1 cm} \leq~ \ceil{\frac{\frac{n}{2}-\ceil{p}}{2\ceil{p}+1}} + 1 + \frac{n}{2} - 1 + \ceil{p} ~=~ \frac{n}{2} + \ceil{ \frac{ \frac{n}{2} - \ceil{p}}{2\ceil{p}+1}} + \ceil{p}. \qedhere \]
\end{proof}


\begin{ex}\label{ex:evenaltconstruction}
An example of the construction of $B'$ in the proof of Proposition \ref{evenpathbounds1} is shown in Figure \ref{evenaltpicture}. For $n = 16$, $\ceil{p} = 2$. Note that we start by coloring $u_0$ and skip over $2\ceil{p}$ vertices each time. Also, $u_j=u_5=u_{2\ceil{p}+1}$, so $m=1$ and $k=3>\ceil{p}$. Thus, we color $u_8$ at the end.
\end{ex}

\begin{figure}[H]
    \centering
    \includegraphics[scale=0.88]{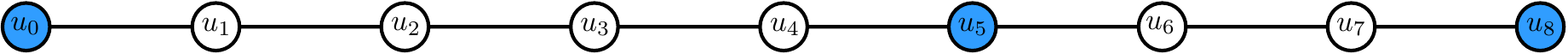}
    \caption{The auxiliary path $P'$ for $\vec{P}_{16}$ is shown with PSD zero forcing set $B$ such that $\ptp(\vec{P}_{16}; B) \leq \ceil{p} = 2$.}\label{evenaltpicture}
\end{figure}

We now have upper and lower bounds on the throttling number of an even alternating path, which we can show are equal to obtain an exact formula. To do so, we need the following fact.

\begin{rem}\label{knuth}
\cite[page 72]{knuth} Let $m$ and $n$ be integers such that $n > 0$. Then, for any $x \in \mathbb{R}$, \[\ceil{\frac{m + \ceil{x}}{n}} = \ceil{\frac{m + x}{n}}.\]
\end{rem}

\begin{thm}\label{evenpath}
    For an alternating path $\vec{P}_n$ where $n$ is even, $\throt (\vec{P}_n) = \frac{n}{2} + \ceil{\sqrt{n+1} - 1}$.
\end{thm}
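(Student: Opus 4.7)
My plan is to prove the theorem by sandwiching $\throt(\vec{P}_n)$ between the lower bound in Proposition~\ref{evenpathbounds2} and the upper bound in Proposition~\ref{evenpathbounds1} and showing these bounds are equal. Since $2p = \sqrt{n+1} - 1$, the lower bound $\frac{n}{2} + \lceil 2p \rceil$ already matches the claimed formula $\frac{n}{2} + \lceil \sqrt{n+1}-1\rceil$, so it remains to show the upper bound is at most this same quantity.

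Letting $q = \lceil p \rceil$ and using that $\lceil y \rceil + m = \lceil y + m \rceil$ for any integer $m$, I would collapse the upper bound into
\[ \frac{n}{2} + \left\lceil \frac{n/2 + 2q^2}{2q+1} \right\rceil. \]
Since $p = \frac{\sqrt{n+1}-1}{2}$ yields $\frac{n}{2} = 2p^2 + 2p$, a direct algebraic calculation gives
\[ \frac{n/2 + 2q^2}{2q+1} = 2p + \frac{2(q-p)^2}{2q+1}, \]
so the task reduces to establishing $\big\lceil \frac{n/2 + 2q^2}{2q+1} \big\rceil \leq \lceil 2p \rceil$.

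I would then split into cases based on $\delta := q - p \in [0,1)$. If $\delta \in [0, 1/2)$, then $2p \in (2q-1, 2q]$, so $\lceil 2p \rceil = 2q$, and the needed inequality $\frac{n/2+2q^2}{2q+1} \leq 2q$ reduces (via $\frac{n}{2}=2p(p+1)$) to $p(p+1) \leq q(q+1)$, which holds because $0 \leq p \leq q$. If $\delta \in (1/2, 1)$, then $\lceil 2p \rceil = 2q - 1$, and the inequality reduces to $n \leq 4q^2 - 2$; the condition $\delta > 1/2$ is equivalent to $\sqrt{n+1} < 2q$, i.e., $n+1 < 4q^2$, and integrality of $n$ then gives $n \leq 4q^2 - 2$. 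The boundary value $\delta = 1/2$ would force $p = q - \frac{1}{2}$, hence $n = 4q^2 - 1$, which is odd and contradicts $n$ even; so this boundary does not occur.

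The main obstacle is the case $\delta \in (1/2, 1)$, where the correction $\frac{2(q-p)^2}{2q+1}$ could in principle push the expression into the next integer; what saves the argument is that the evenness of $n$ (together with its being an integer) keeps $\delta$ out of the narrow interval $\left(\tfrac{1}{2}, \tfrac{(2q+1) - \sqrt{4q^2-1}}{2}\right)$ in which the required inequality would fail. Combining all cases gives $\mathrm{UB} \leq \mathrm{LB}$, which together with the automatic inequality $\mathrm{LB} \leq \throt(\vec{P}_n) \leq \mathrm{UB}$ yields the desired formula.
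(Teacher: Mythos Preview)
Your proposal is correct and follows essentially the same approach as the paper: sandwich $\throt(\vec{P}_n)$ between the bounds of Propositions~\ref{evenpathbounds2} and~\ref{evenpathbounds1}, then prove equality by a case split on whether $\lceil 2p\rceil$ equals $2\lceil p\rceil$ or $2\lceil p\rceil-1$ (your $\delta\lessgtr 1/2$). Your algebraic identity $\frac{n/2+2q^2}{2q+1}=2p+\frac{2(q-p)^2}{2q+1}$ streamlines the paper's separate lower-bound verification, and your direct integer argument $n+1<4q^2\Rightarrow n\le 4q^2-2$ replaces the paper's appeal to the Knuth ceiling identity (Remark~\ref{knuth}), but the two arguments are otherwise the same.
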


\begin{proof}
    Let $n$ be a positive even integer and $p=\frac{\sqrt{n+1}-1}{2}$. Note that $n = 4p^2 + 4p$. From Propositions \ref{evenpathbounds2} and \ref{evenpathbounds1}, we have an upper and lower bound for $\throt(\vec{P}_n)$. Thus, it is sufficient to prove that these bounds are equal, which after algebraic manipulation and substitution is equivalent to proving
    
    
    \begin{eqnarray} 
        \ceil{ \frac{ 2p^2+2p - \ceil{p}}{2 \ceil{p} + 1} + \ceil{p}} = \ceil{ 2p }. \label{equiv}
    \end{eqnarray}
    
    We will first bound the value under the ceiling on the left side of equation (\ref{equiv}) below by $2p$. The lower bound of $2p$ can be shown with this series of equivalent statements:
    \begin{eqnarray}
        &&\frac{ 2p^2+2p - \ceil{p}}{2 \ceil{p} + 1} + \ceil{p} \geq 2p \label{equivlower}\\[.2 cm]
        &\iff&\frac{ 2p^2+2 \ceil{p}^2-4p\ceil{p}}{2 \ceil{p} + 1} \geq 0 \nonumber\\[.2 cm]
        &\iff&2p^2+2 \ceil{p}^2-4p\ceil{p} \geq 0 \hspace{1 cm} \text{(since $2\ceil{p}+1> 0$)}\nonumber\\[.2 cm]
        &\iff&2(p-\ceil{p})^2 \geq 0.\nonumber
    \end{eqnarray}
    We know $2(p-\ceil{p})^2 \geq 0$ is always true, so the lower bound of $2p$ given by (\ref{equivlower}) must be true.
    
   Now, we split into 2 cases. First, suppose $\ceil{2p}=\ceil{2\ceil{p}}$. This case motivates us to bound the expression on the left side of (\ref{equivlower}) above by $2\ceil{p}$. This upper bound is clear: \[\frac{ 2p^2+2p - \ceil{p}}{2 \ceil{p} + 1} + \ceil{p} \leq \frac{2\ceil{p}^2+2\ceil{p}-\ceil{p}}{2\ceil{p}+1}+\ceil{p}=2\ceil{p},\]
   since $\ceil{p} \geq p$ and $2\ceil{p} + 1> 0$. Thus, taking this with (\ref{equivlower}) gives us   
   \[2p\leq\frac{ 2p^2+2p - \ceil{p}}{2 \ceil{p} + 1} + \ceil{p} \leq 2\ceil{p}.\] This equivalence follows since $\ceil{2p}=\ceil{2\ceil{p}}$, which proves (\ref{equiv}): \[\ceil{2p}=\ceil{\frac{ 2p^2+2p - \ceil{p}}{2 \ceil{p} + 1} + \ceil{p}}=\ceil{2\ceil{p}}.\]
    
    Alternatively, suppose $\ceil{2p}=\ceil{2\ceil{p}-1}$, which is equivalent to  $-1<p-\ceil{p}\leq -\frac{1}{2}$. This case motivates us to bound the expression from the left side of (\ref{equivlower}) above by $2\ceil{p}-1$. We do this with the following equivalent statements:
    \begin{eqnarray}
        &&\frac{ 2p^2+2p-p-1}{2 \ceil{p} + 1} + \ceil{p} \leq 2\ceil{p} -1 \label{equivupper2}\\[.2 cm]
        &\iff&\frac{ 2p^2+p-2 \ceil{p}^2 + \ceil{p}}{2 \ceil{p} + 1} \leq 0  \nonumber \\[.2 cm]
        &\iff& 2p^2+p-2 \ceil{p}^2 + \ceil{p} \leq 0  \nonumber \hspace{2 cm}\text{(since $2\ceil{p}+1>0$)} \\[.2 cm]
        &\iff& (p + \ceil{p})(2(p-\ceil{p})+1) \leq 0  \nonumber \\[.2 cm]
        &\iff& 2(p-\ceil{p})+1 \leq 0.  \hspace{3.3 cm}\text{(since $p + \ceil{p}>0$)} \nonumber
    \end{eqnarray}
    We see that $2(p-\ceil{p})+1 \leq 0$ is always true since $-1<p-\ceil{p}\leq -\frac{1}{2}$, so (\ref{equivupper2}) is true. Additionally, observe that 
    \[ \ceil{\frac{ 2p^2+2p-\ceil{p}}{2 \ceil{p} + 1}}=\ceil{\frac{ 2p^2+2p+\ceil{-p-1}}{2 \ceil{p} + 1}}=\ceil{\frac{ 2p^2+2p-p-1}{2 \ceil{p} + 1}}.\]
    The first equality is due to the fact that $-\ceil{p}=\ceil{-p-1}$ when $-1<p-\ceil{p}\leq -\frac{1}{2}$. The second equality follows from applying Remark \ref{knuth}, which can be done since $2p^2+2p=\frac{n}{2} \in \ZZ$ and $2\ceil{p}+1 \in \ZZ^+$. This, along with the bounds in (\ref{equivlower}) and (\ref{equivupper2}), gives us
    
    \[\ceil{2p} \leq 
    \ceil{\frac{ 2p^2+2p - \ceil{p}}{2 \ceil{p} + 1} + \ceil{p}} = \ceil{\frac{ 2p^2+2p-p-1}{2 \ceil{p} + 1}+ \ceil{p}} \leq \ceil{2\ceil{p} -1}.\]
    Since $\ceil{2p}=\ceil{2\ceil{p} -1}$ in this case, we obtain the equality in (\ref{equiv}): \[\ceil{\frac{ 2p^2+2p - \ceil{p}}{2 \ceil{p} + 1} + \ceil{p}}=\ceil{2p}.\]
    
    \noindent Since (\ref{equiv}) is true in both cases, $\throt (\vec{P}_n) = \frac{n}{2} + \ceil{\sqrt{n+1} - 1}$.
\end{proof}

Note that if $\vec{P}_n$ is an alternating path for some positive integer $n$, $\throt(\vec{P}_n)$ is a lower bound for the maximum value of the OTI of $P_n$. We conjecture that the alternating path achieves this maximum value and the following results are tools that we build which may aid in proving this conjecture.

\begin{prop}\label{sourcesinkflip}
    Let $\vec{G}$ be an oriented graph that has an arc $(u, v)$ such that $u$ is a source and $v$ is a sink. If the arc $(u,v)$ is flipped to obtain $\vec{G}_0$, then $\throt(\vec{G_0}) \leq \throt(\vec{G})$.
\end{prop}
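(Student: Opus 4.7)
The plan is to take a zero forcing set $B \subseteq V(\vec{G})$ and a set of forces $\calf$ of $B$ realizing $\throt(\vec{G})=|B|+\pt(\vec{G};B)$ with $\pt(\vec{G};\calf)=\pt(\vec{G};B)$, and to exhibit $B_0\subseteq V(\vec{G}_0)$ together with a set of forces $\calf_0$ of $B_0$ satisfying $|B_0|+\pt(\vec{G}_0;\calf_0)\leq |B|+\pt(\vec{G};\calf)$. Two basic facts will be used throughout: since $u$ is a source of $\vec{G}$, $u\in B$, and since $v$ is a sink of $\vec{G}$, $v$ performs no force in $\calf$. The crucial structural observation, on which the whole argument hinges, is that because $u$ is a source, no vertex $y\in V(\vec{G})$ has $u$ as an out-neighbor. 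Consequently the ``unique white out-neighbor'' condition for every force $(y\to z)\in\calf$ is entirely insensitive to $u$'s color.

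I would split on whether $(u\to v)\in\calf$. If $(u\to v)\notin\calf$---this covers the case $v\in B$ as well as the case that $v$ is forced in $\calf$ by some in-neighbor $x\neq u$---the plan is to keep $B_0=B$ and $\calf_0=\calf$. Every arc used by $\calf$ still lies in $\vec{G}_0$, and a short induction on time step shows $\calf^{[t]}\subseteq \calf_0^{[t]}$: for each force $(y\to z)\in\calf$ performed at time $t$ in $\vec{G}$, one has $N^+_{\vec{G}_0}(y)\subseteq N^+_{\vec{G}}(y)$, and the structural observation keeps $u$ out of any relevant out-neighborhood, so $z$ remains the unique white out-neighbor of $y$ in $\vec{G}_0$ at time $t-1$. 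If instead $(u\to v)\in\calf$, the plan is to reassign: set $B_0=(B\setminus\{u\})\cup\{v\}$ and $\calf_0=(\calf\setminus\{u\to v\})\cup\{v\to u\}$, so $|B_0|=|B|$. Since $v$'s only out-neighbor in $\vec{G}_0$ is $u$, the force $v\to u$ is valid and fires at time $1$, making $u$ blue. Every remaining force $(y\to z)\in\calf\setminus\{u\to v\}$ has $y\neq u$ (since $u\to v$ was the sole force from $u$) and $y\neq v$, and a parallel induction shows that each such force fires in $\vec{G}_0$ no later than it does in $\vec{G}$. This yields $\pt(\vec{G}_0;\calf_0)\leq\max\{1,\max_{w\neq u,v}\tau(w)\}\leq\pt(\vec{G};\calf)$, where $\tau(w)$ denotes the time $w$ becomes blue in $\vec{G}$ under $\calf$.

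The hard part will be the cascading-delay concern in the second case: since $u$ is blue at time $1$ rather than time $0$ in $\vec{G}_0$, one might fear that forces whose uniqueness certificate in $\vec{G}$ relied on $u$ already being blue get pushed back, lengthening the overall propagation time. The whole point of the source hypothesis on $u$ is to eliminate this worry---no vertex has $u$ as an out-neighbor, so no uniqueness test ever examines $u$'s color, and the one-step delay of $u$ is invisible to every other force in $\calf_0$. Combining the two cases then yields $\throt(\vec{G}_0)\leq|B_0|+\pt(\vec{G}_0;\calf_0)\leq\throt(\vec{G})$, as required.
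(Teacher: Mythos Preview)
Your proof is correct and follows essentially the same approach as the paper's: both split on whether $(u\to v)\in\calf$, keep $B_0=B$ and $\calf_0=\calf$ in the first case, swap $u$ for $v$ (setting $B_0=(B\setminus\{u\})\cup\{v\}$ and $\calf_0=(\calf\setminus\{u\to v\})\cup\{v\to u\}$) in the second, and rely on the same two structural facts---that $v$ being a sink means it performs no force in $\calf$, and that $u$ being a source means no vertex has $u$ as an out-neighbor, so $u$'s delayed blueness in Case~2 is invisible to every other force. Your treatment makes the induction and the ``cascading-delay'' point more explicit than the paper does, but the underlying argument is the same.
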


\begin{proof}
     Let $B$ be a zero forcing set of $\vec{G}$, and let $\F$ be a set of forces of $B$ that achieves $\throt(\vec{G})=|B|+\pt(\vec{G};\F)$. Since $u$ is a source, $u \in B$. There are now two cases to consider.
    
    First, suppose $(u \to v) \notin \F$. Initially color $B$ blue in $\vec{G}_0$. Note that the only vertices with different out-neighborhoods in $\vec{G}_0$ are $u$ and $v$. Specifically, $u$ has lost an out-neighbor and $v$ has gained one, namely $u$. However, since $u \in B$, the set of white out-neighbors of $v$ remains the same. Also, each arc in $\F$ exists in $\vec{G}_0$. As a result, $\pt(\vec{G}_0; \F) \leq \pt(\vec{G}; \F)$, so $\throt{(\vec{G}_0)} \leq |B| + \pt(\vec{G}_0;\F) \leq |B| + \pt(\vec{G};\F) = \throt{(\vec{G})}$.
    
    Now, suppose $(u \to v) \in \F$, which implies $\pt(\vec{G}; B) \geq 1$. Initially color $\left( B \setminus \{u\} \right) \cup \{v\}$ blue in $\vec{G}_0$. As in the previous case, $u$ has lost a white out-neighbor and $v$ has gained a white out-neighbor, namely $u$. However, since $v$ was a sink in $\vec{G}$, the vertex $u$ is the only out-neighbor of $v$ in $\vec{G}_0$, so $v$ forces $u$ in the first time step. For any other vertex in $V(\vec{G}_0)$, its white out-neighborhood in $\vec{G}_0$ is a subset of its white out-neighborhood in $\vec{G}$ because none of these vertices have $u$ as an out-neighbor and $v$ starts blue. Thus, all forces that occurred on the first time step in $\vec{G}$ can still occur on the first time step in $\vec{G}_0$, excluding $u \to v$ but including $v \to u$. After the first time step, all remaining forces in $\F$ can occur on $\vec{G}_0$ without increasing propagation time. Thus, it follows that
    \begin{eqnarray*}
        \throt{(\vec
        G_0)} &\leq& | ( B \setminus \{u\} ) \cup \{v\} | + \pt \left(\vec{G}_0; ( \F \setminus \{u \to v\} ) \cup \{v \to u\} \right) \\
        &\leq& |B| + \pt(\vec{G}; \F) \\[.2 cm]
        &=& \throt{(\vec{G})}.
    \end{eqnarray*} All cases have been exhausted. Thus, $\throt{(\vec{G}_0)} \leq \throt{(\vec{G})}$.
\end{proof}

\begin{cor}\label{sourcesinkflipcor}
    Let $\vec{P}$ be an oriented path with $(u,v) \in E(\vec{P})$ such that both $u$ and $v$ are neither sources nor sinks. If the arc $(u,v)$ is flipped to obtain $\vec{P}'$, then $\throt(\vec{P}') \geq \throt(\vec{P})$.
\end{cor}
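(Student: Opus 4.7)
The plan is to reduce this corollary to Proposition \ref{sourcesinkflip} by observing that flipping an arc between two interior vertices of an oriented path creates exactly the source/sink configuration required by that proposition.

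First I would analyze the local structure around the arc $(u,v)$ in $\vec{P}$. Since $\vec{P}$ is a path, each of $u$ and $v$ has degree at most $2$. Being neither a source nor a sink forces both in-degree and out-degree to be at least $1$, so both $u$ and $v$ are interior vertices with in-degree $1$ and out-degree $1$. Because $(u,v) \in E(\vec{P})$, the out-neighbor of $u$ is $v$ and the in-neighbor of $v$ is $u$. Let $u'$ denote the unique in-neighbor of $u$ and $v'$ the unique out-neighbor of $v$; then the local configuration is $u' \to u \to v \to v'$.

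Next I would describe what happens after flipping $(u,v)$ to obtain $\vec{P}'$. In $\vec{P}'$, the vertex $v$ now has out-neighbors $u$ and $v'$ with no in-neighbor, so $v$ is a source; similarly, $u$ has in-neighbors $u'$ and $v$ with no out-neighbor, so $u$ is a sink. Thus $\vec{P}'$ contains the arc $(v,u)$ with $v$ a source and $u$ a sink.

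Finally, I would apply Proposition \ref{sourcesinkflip} directly to $\vec{P}'$: flipping the arc $(v,u)$ (with $v$ a source and $u$ a sink) in $\vec{P}'$ recovers $\vec{P}$, and the proposition guarantees $\throt(\vec{P}) \leq \throt(\vec{P}')$, which is the desired inequality. The main ``obstacle'' is really just the degree-counting observation in the first step; once one notes that interior vertices of a path which are neither source nor sink must have in-degree and out-degree exactly one, the rest of the argument is a one-line invocation of Proposition \ref{sourcesinkflip}.
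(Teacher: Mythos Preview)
Your proposal is correct and follows essentially the same approach as the paper: both arguments observe that $u$ and $v$ must each have in-degree $1$ and out-degree $1$, deduce that after the flip $v$ is a source and $u$ is a sink in $\vec{P}'$, and then apply Proposition~\ref{sourcesinkflip} to the flip from $\vec{P}'$ back to $\vec{P}$. Your version simply spells out the local neighborhood $u' \to u \to v \to v'$ a bit more explicitly than the paper does.
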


\begin{proof}
    Since $u$ and $v$ are neither sources nor sinks and $\vec{P}$ is a path, both $u$ and $v$ each have in-degree 1 and out-degree 1. 
    After flipping $(u, v)$, $u$ is now a sink and $v$ is now a source in $\vec{P}'$. By Proposition \ref{sourcesinkflip}, $\throt(\vec{P}) \leq \throt(\vec{P'})$ since $\vec{P}$ can be obtained from $\vec{P}'$ by flipping the arc $(v,u)$ between a source and a sink.
\end{proof}


\begin{section}{Concluding Remarks}\label{sec:conclusion}

     It is clear that the throttling number of any undirected graph is bounded below by the minimum throttling number of all of its orientations. Proposition \ref{subset} and Corollary \ref{subsetcor} allow us to bound the throttling number of an undirected graph $G$ above by the maximum throttling number of all of its orientations if $\throt(G) \leq \alpha(G) + 1$. However, it remains to be shown whether $\throt(G)$ is contained in the orientation throttling interval of $G$ whenever $\throt(G) > \alpha(G) + 1$.

    In Section \ref{altpaths}, we studied the alternating path $\vec{P}_n$, which we conjecture to achieve the maximum throttling number in $\oti(P_n)$. We have verified this computationally for $n \leq 14$ (see \cite{code}), but it still remains an open question whether this is true for paths of any length. Proposition \ref{sourcesinkflip} and Corollary \ref{sourcesinkflipcor} may be useful starting points since they characterize the behavior of the throttling number after performing certain types of arc flips. However, not all oriented paths can be obtained from alternating paths merely by performing these specific types of flips. Additionally, in many cases there exist multiple orientations of a path that achieve the maximum throttling number in the OTI. To aid in future computations, we share a public GitHub repository \cite{code} containing multiple \emph{Sage} programs which can calculate throttling number, propagation time, terminus, OTI, and other parameters for a given graph or digraph.
    
    Another question we have is whether we can generalize the alternating path conjecture to all bipartite graphs. In other words, is it true that for any bipartite graph, the upper bound of that graph's OTI is achieved when every vertex is either a source or a sink? Note that, in a bipartite graph, it is possible obtain such an orientation by directing all arcs from one part to the other.
    
    Throttling has also been studied as a \emph{forbidden subgraph problem} for undirected graphs in \cite{JK19}. Considering this problem for directed graphs, we found that if a graph $G$ on $n$ vertices has $C_5$, $K_2 \cup K_2$, $K_3$ $\square$ $P_2$, or a subgraph of $K_3$ $\square$ $P_2$ obtained by deleting $K_3$ edges, no orientation of $G$ has throttling number $n$. However, a complete characterization of forbidden subgraphs does not yet exist. 

\end{section}
\begin{section}{Acknowledgements}
This work was carried out as part of the 2020 SMALL REU, supported by NSF-020262 and by Williams College.
\end{section}

\end{document}